\documentclass[a4paper,11pt]{article}

\usepackage{cmap}
\usepackage[T1]{fontenc}
\usepackage{lmodern}
\usepackage[margin=1.03in]{geometry}
\usepackage{amsmath,amssymb,amsthm,mathtools}
\numberwithin{equation}{section}
\usepackage{enumitem}
\usepackage{microtype}
\usepackage{mathrsfs}
\usepackage[pagebackref]{hyperref}
\allowdisplaybreaks[2]
\hypersetup{
 colorlinks=true,
 linkcolor=blue,
 citecolor=blue,
 urlcolor=blue,
 pdftitle={On the Geometry of Wasserstein Barycenter II: Riemannian Rigidity, Essential Non-Branching, and Finsler Models},
 pdfauthor={Bang-Xian Han and Deng-Yu Liu},
 pdfkeywords={Wasserstein barycenter, curvature--dimension, RCD, Finsler geometry, non-branching}
}
\renewcommand*{\backrefalt}[4]{%
 \ifcase #1\relax
 \or \unskip\nobreak\hspace{.4em}%
     \mbox{\footnotesize\textup{[cited on p.~#2]}}%
 \else \unskip\nobreak\hspace{.4em}%
     \mbox{\footnotesize\textup{[cited on pp.~#2]}}%
 \fi}
\newtheorem{theorem}{Theorem}[section]
\newtheorem{proposition}[theorem]{Proposition}
\newtheorem{lemma}[theorem]{Lemma}
\newtheorem{corollary}[theorem]{Corollary}
\newtheorem{maintheorem}{Theorem}

\theoremstyle{definition}
\newtheorem{definition}[theorem]{Definition}
\theoremstyle{remark}
\newtheorem{remark}[theorem]{Remark}
\theoremstyle{plain}

\newcommand{\R}{\mathbb R}
\newcommand{\cL}{\mathcal L}
\newcommand{\cP}{\mathcal P}
\newcommand{\Ent}{\operatorname{Ent}}
\newcommand{\Bary}{\operatorname{Bar}}
\newcommand{\Var}{\operatorname{Var}}
\newcommand{\tr}{\operatorname{tr}}
\newcommand{\supp}{\operatorname{spt}}
\newcommand{\dist}{\mathsf d}
\renewcommand{\d}{\,\mathrm d}
\newcommand{\mm}{\mathfrak m}
\newcommand{\Sym}{\operatorname{Sym}}
\newcommand{\argmin}{\operatorname*{argmin}}
\newcommand{\BCD}{\mathrm{BCD}}
\newcommand{\RCD}{\mathrm{RCD}}
\newcommand{\CD}{\mathrm{CD}}
\newcommand{\CDe}{\mathrm{CD}^{e}}
\newcommand{\CDs}{\mathrm{CD}^{*}}
\newcommand{\Ch}{\operatorname{Ch}}
\newcommand{\Pp}{\mathcal P_2}
\newcommand{\mh}{\widehat{\mathfrak m}}

\title{\bfseries On the Geometry of Wasserstein Barycenter II:\\
Riemannian Rigidity, Essential Non-Branching, and Finsler Models}
\author{Bang-Xian Han\thanks{School of Mathematics, Shandong University, Jinan 250100, China. Email: hanbx@sdu.edu.cn.}
\and Deng-Yu Liu\thanks{School of Mathematical Sciences, University of Science and Technology of China, Hefei 230026, China. Email: yzldy@mail.ustc.edu.cn.}}
\date{\today}

\begin{document}
\maketitle

\begin{abstract}
Wasserstein barycenters extend weighted averages to probability measures on
metric spaces.  We study how entropy inequalities at these barycenters
determine the geometry of the underlying space.  The barycenter
curvature--dimension (BCD) condition extends the two-point entropy
inequalities of synthetic Ricci curvature to finite families of measures.
We prove that $\BCD(K,N)$ is equivalent to $\RCD(K,N)$ for
$K\in\R$ and $1<N<\infty$.  Thus entropy comparison at Wasserstein
barycenters gives a new characterization of Riemannian curvature--dimension
spaces.

Allowing an additive error in the entropy inequality yields an almost BCD
condition that is stable under measured Gromov--Hausdorff convergence.
For sufficiently small errors, this
condition implies essential non-branching and admits non-Riemannian
examples.  Within the BCD framework, this answers an open problem posed by
Ambrosio in his 2018 ICM survey.  We also obtain a quantitative estimate
for cotangent norms that bounds the failure of the parallelogram identity
in terms of the entropy error.  
\end{abstract}

\medskip
\noindent\textbf{Keywords.}
Wasserstein barycenter; curvature--dimension condition; RCD space; Finsler geometry; essential non-branching.

\medskip
\noindent\textbf{MSC 2020.}
Primary 53C23, 49Q22; Secondary 46B20, 28A75.

\tableofcontents
\clearpage

\section{Introduction}

\paragraph{Background and motivation.}

McCann introduced displacement convexity for integral functionals on
Wasserstein space \cite{McCannDisplacement}.  The Lott--Sturm--Villani
curvature--dimension condition formulates a lower Ricci curvature bound
through displacement convexity inequalities for entropy
\cite{LottVillani,SturmI,SturmII}.  This two-point theory also has a
barycentric formulation.  Write $W_2$ for the quadratic Wasserstein distance.
If $(\mu_t)_{t\in[0,1]}$ is a $W_2$-geodesic, then
$\mu_t$ minimizes
$\nu\mapsto (1-t)W_2^2(\nu,\mu_0)+tW_2^2(\nu,\mu_1)$.
Conversely, every minimizer is a time-$t$ point of a $W_2$-geodesic from
$\mu_0$ to $\mu_1$.  This follows from the triangle inequality and the
convexity of the square function.  The curvature--dimension inequality can
therefore be viewed as a two-point entropy Jensen inequality at a
Wasserstein barycenter.

This viewpoint suggests replacing the two endpoints by an arbitrary finite
family.  Given measures $\mu_1,\ldots,\mu_k$ and positive weights
$\lambda_1,\ldots,\lambda_k$ summing to one, a Wasserstein barycenter minimizes
$\nu\mapsto\sum_i\lambda_iW_2^2(\nu,\mu_i)$
\cite[Section~1]{AguehCarlier}.  Agueh and Carlier studied the
characterization and regularity of Wasserstein barycenters in Euclidean
spaces, as well as the associated Jensen inequalities
\cite{AguehCarlier}.  Le Gouic and Loubes proved existence and consistency
of Wasserstein barycenters on proper geodesic metric spaces
\cite{LeGouicLoubes}.  Regularity and Jensen inequalities on Riemannian
manifolds were studied in \cite{KimPass,MaWassersteinBarycenter}.
The barycenter
curvature--dimension condition, abbreviated BCD, requires at least one such
barycenter to satisfy the corresponding curvature-dependent entropy Jensen
inequality \cite[Definitions~1.6 and~1.9]{HanLiuZhuBCD}.  In finite
dimension, the curvature bound $K$ and dimension parameter $N$ enter through
the distortion coefficients from the two-point theory.  When $K=0$, the
inequality expresses the concavity of
$\exp(-\Ent_\mm/N)$, where $\Ent_\mm$ is the entropy relative to $\mm$.
The barycenter and the associated optimal couplings need not be unique; the
definition requires only one barycenter satisfying the inequality.
The survey \cite{HanLiuZhuSurvey} presents this condition on extended metric
measure spaces and explains its relation to Wasserstein barycenters,
displacement convexity, and Jensen inequalities.

Comparison at barycenters of finitely many marginals imposes more than
two-point displacement convexity.
Two-point displacement convexity does not distinguish Euclidean spaces from
non-Euclidean normed spaces.  Every finite-dimensional normed space with Lebesgue measure
satisfies the corresponding nonnegative curvature--dimension condition;
see \cite[the last theorem]{Villani2009} and
\cite[Theorem~1.2 and Remark~8.2(b)]{OhtaFinslerInterpolation}.
By contrast, a barycenter of three or more marginals compares several
transport directions simultaneously.  The resulting relation between
Wasserstein barycenter convexity and Hilbertian geometry in normed spaces
was studied in \cite{HanLiuRigidity}.

For metric measure spaces, the Riemannian curvature--dimension theory combines
synthetic Ricci curvature bounds with infinitesimal Hilbertianity.
Ambrosio--Gigli--Savar\'e related the
$\RCD(K,\infty)$ condition to quadraticity of the Cheeger energy and
linearity of the heat flow \cite{AGSCalculus,AGSRCD}.  They also obtained
equivalent formulations through the entropy gradient flow and
Bakry--\'Emery estimates \cite{AGSBakryEmery}.  The $\sigma$-finite extension
is treated in \cite{AGMRsigmafinite}.  Erbar--Kuwada--Sturm and
Ambrosio--Mondino--Savar\'e developed the finite-dimensional theory and
related its entropic, Bakry--\'Emery, and gradient-estimate formulations
\cite{EKS,AmbrosioMondinoSavare}.

We ask three questions.  Does finite-dimensional BCD characterize the
Riemannian curvature--dimension condition?  Does an additive error still
rule out branching and control the failure of Hilbertianity?  Do the
resulting conditions define stable classes containing non-Riemannian spaces?
The last question is motivated by an open problem posed by Ambrosio in his
2018 ICM survey: whether there is a stable non-Riemannian class of essentially
non-branching spaces
\cite[Section~9, first open problem, p.~331]{AmbrosioICM}.

\subsection*{Setting and conventions}

Let $(X,\dist,\mm)$ be a complete separable geodesic metric measure space,
where $\mm$ is a locally finite Radon measure with full support.
Write $\cP(X)$ for the Borel probability measures on $X$ and $\Pp(X)$ for
those with finite second moment, equipped with $W_2$.
For $\mu,\nu\in\cP(X)$, let $\Pi(\mu,\nu)$ be the set of their couplings.
We write $B_r(x)$ for an open metric ball and $\mu|_A$ for the restriction
of a measure to a Borel set $A$.  Let
$\Omega=\sum_{i=1}^k\lambda_i\delta_{\mu_i}$ be a finitely supported
probability measure on $\Pp(X)$, where $\lambda_i>0$ and
$\sum_i\lambda_i=1$.  Set
\[
\Var(\Omega):=\inf_{\nu\in\Pp(X)}\sum_i\lambda_iW_2^2(\nu,\mu_i),
\qquad
\Bary(\Omega):=\argmin_{\nu\in\Pp(X)}\sum_i\lambda_iW_2^2(\nu,\mu_i).
\]
For \(\mu=\rho\mm\), define
\(\Ent_\mm(\mu):=\int_X\rho\log\rho\d\mm\)
whenever the positive part is integrable, with the convention $0\log0=0$.
The value may be \(-\infty\).  For
\(\mu\not\ll\mm\), or when the positive part is not integrable, set
\(\Ent_\mm(\mu)=+\infty\).  Write
\(\mathrm D(\Ent_\mm)
:=\{\mu\in\Pp(X):\Ent_\mm(\mu)\in\R\}\),
and throughout ``finite entropy'' means membership in this real-valued domain.
Under the standing local finiteness and full-support assumptions,
$\mathrm D(\Ent_\mm)$ is $W_2$-dense in $\Pp(X)$.  Finitely supported
probability measures are $W_2$-dense by \cite[Theorem~6.18]{Villani2009}.
For each atom, the normalized restriction of $\mm$ to a sufficiently small
ball gives an approximating probability measure with finite entropy.

Put \(U_N(\mu):=\exp(-\Ent_\mm(\mu)/N)\).
For $\kappa\in\R$ let
\[
s_\kappa(r)=
\begin{cases}
\kappa^{-1/2}\sin(\sqrt\kappa r),&\kappa>0,\\
r,&\kappa=0,\\
(-\kappa)^{-1/2}\sinh(\sqrt{-\kappa}r),&\kappa<0,
\end{cases}
\qquad c_\kappa=s_\kappa',\qquad t_\kappa=\frac{s_\kappa}{c_\kappa}.
\]
The ratios $r/s_\kappa(r)$ and $r/t_\kappa(r)$ are assigned the value $1$
at $r=0$.  For $\kappa\le0$, they are defined for all $r\ge0$; when
$\kappa=0$, both are identically $1$.
For $\kappa>0$, the range of finite coefficients is
$0\le r<\pi/\sqrt\kappa$, before the first zero of $s_\kappa$, as in
\cite[Section~3.2, (3.6)--(3.7)]{HanLiuZhuBCD}.  At that zero, we assign
the ratios their one-sided extended-real limits.

We interpret $r/t_\kappa(r)$ as
$r c_\kappa(r)/s_\kappa(r)$ wherever $c_\kappa(r)=0$.
For $\kappa>0$, this coefficient changes sign at
$\pi/(2\sqrt\kappa)$.  In Lemma~\ref{lem:finite-to-infinite}, the BCD
inequality itself gives positivity of the weighted sum of these coefficients.

The finite-dimensional condition $\BCD(K,N)$ is given in
\cite[Definition~1.9]{HanLiuZhuBCD}.  That definition applies to arbitrary
finitely supported probability measures on $\Pp(X)$.  The reverse implication
proved here uses only its restriction to marginals in
$\mathrm D(\Ent_\mm)$.  Let
$\Omega=\sum_i\lambda_i\delta_{\mu_i}$ be as above, with
$\mu_i\in\mathrm D(\Ent_\mm)$.  The condition requires a barycenter
$\bar\mu\in\Bary(\Omega)$ for which the following inequality holds, where
$r_i=W_2(\bar\mu,\mu_i)$:
\begin{equation}\label{eq:finite-bcd}
 \sum_i\lambda_i\frac{r_i}{s_{K/N}(r_i)}U_N(\mu_i)
 \le U_N(\bar\mu)
 \sum_i\lambda_i\frac{r_i}{t_{K/N}(r_i)}.
\end{equation}
At $K=0$, \eqref{eq:finite-bcd} reduces to
$U_N(\bar\mu)\ge\sum_i\lambda_iU_N(\mu_i)$.
When $K>0$, the coefficient convention above applies.  In particular,
finiteness of the coefficients in \eqref{eq:finite-bcd} requires
$r_i<\pi\sqrt{N/K}$ for every $i$.

\paragraph{Curvature--dimension conditions.}

The entropic conventions are those of
\cite[Definitions~3.1 and~3.9]{EKS}.  The reduced condition used below was
introduced by Bacher--Sturm \cite{BacherSturm}.  For every
$\mu_0,\mu_1\in\mathrm D(\Ent_\mm)$, the condition $\CDe(K,N)$ requires a
constant-speed $W_2$-geodesic
$(\mu_t)_{t\in[0,1]}\subset\mathrm D(\Ent_\mm)$ satisfying
\begin{equation}\label{eq:cde-definition}
 U_N(\mu_t)\ge
 \sigma_{K/N}^{(1-t)}(D)U_N(\mu_0)
 +\sigma_{K/N}^{(t)}(D)U_N(\mu_1),
 \qquad D=W_2(\mu_0,\mu_1).
\end{equation}
Here $\sigma_\kappa^{(t)}(D)=s_\kappa(tD)/s_\kappa(D)$ for $D>0$, and
$\sigma_\kappa^{(t)}(0)=t$.  For positive curvature, its extended value is
used at the endpoint of the finite-coefficient range.  The
adjective \emph{strong} means that \eqref{eq:cde-definition} holds along
every such geodesic.

The dimension-free condition $\CD(K,\infty)$ means $K$-convexity of
$\Ent_\mm$ along one $W_2$-geodesic between each pair of absolutely
continuous probability measures.  Its strong form requires the inequality
along every such geodesic \cite[(2.2)]{RajalaSturm}.  The reduced condition
$\CDs(K,N)$ requires the R\'enyi-entropy inequality with coefficients
$\sigma_{K/N'}$, for every $N'\ge N$ and every pair of compactly supported
absolutely continuous probability measures.  Its strong form requires the
inequality along every admissible geodesic.  Strong $\CDe(K,N)$ and strong
$\CDs(K,N)$ are equivalent \cite[Corollary~3.13]{EKS}.  The
condition $\RCD^*(K,N)$ is $\CDe(K,N)$ together with infinitesimal
Hilbertianity \cite[Definition~3.16 and Theorem~3.17]{EKS}, whereas
$\RCD(K,N)$ uses the unreduced Lott--Sturm--Villani $\CD(K,N)$ condition.
Cavalletti--Milman proved that $\RCD^*(K,N)$ and $\RCD(K,N)$ are equivalent
for finite reference measures \cite[Corollary~13.7]{CavallettiMilman}, and
Li extended the result to locally finite reference measures
\cite[Corollary~1.2]{LiGlobalization}.  Under the standing assumptions,
$\RCD(K,N)$ denotes this common class.  The notation $\CD(K,N)$ remains
reserved for the unreduced condition and $\CDs(K,N)$ for the reduced one.
The role of non-branching in the local-to-global theory is illustrated by
Rajala's counterexample \cite[Example~1.2]{RajalaLocalGlobal}: a compact
geodesic space can satisfy $\CD(0,4)$ locally while failing $\CD(K,\infty)$
globally for a prescribed $K$.

\subsection*{Main results}

The first main theorem identifies the finite-dimensional BCD condition with
the Riemannian curvature--dimension condition. It is a multipoint characterization of RCD
spaces.

\begin{maintheorem}\label{thm:main-rigidity}
Let $(X,\dist,\mm)$ be a complete separable geodesic metric measure space
with a locally finite Radon measure with full support.  For every $K\in\R$ and
$1<N<\infty$,
\[
 \BCD(K,N)\quad\Longleftrightarrow\quad\RCD(K,N).
\]
\end{maintheorem}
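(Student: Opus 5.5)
We prove the two implications separately. The forward direction, $\RCD(K,N)\Rightarrow\BCD(K,N)$, we take from the earlier paper \cite{HanLiuZhuBCD}. If it is not stated there in full generality, we reprove it as follows. On an $\RCD(K,N)$ space, for marginals in $\mathrm D(\Ent_\mm)$, there is a barycenter $\bar\mu\ll\mm$ whose optimal plans to each $\mu_i$ are induced by maps. We then use the multi-marginal displacement-convexity argument: write $\bar\mu$ through the Kantorovich potentials $\varphi_i$ with $\sum_i\lambda_i\nabla\varphi_i=0$ $\bar\mu$-a.e. Apply the pointwise Jacobian comparison along each geodesic from $\bar\mu$ to $\mu_i$; in the RCD setting this is the $\CD(K,N)$ distortion estimate along $W_2$-geodesics issuing from $\bar\mu$, available because RCD spaces are essentially non-branching. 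Combining the comparisons via concavity of $x\mapsto x^{1/N}$ and the first-order condition yields \eqref{eq:finite-bcd}. Smooth approximation and stability under measured Gromov--Hausdorff limits are not needed if the needle/Jacobian argument works directly.

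For the reverse direction, $\BCD(K,N)\Rightarrow\RCD(K,N)$, we proceed in three steps.

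\emph{Step 1.} Taking $k=2$ in \eqref{eq:finite-bcd} recovers $\CDe(K,N)$. A barycenter of $\lambda_1\delta_{\mu_0}+\lambda_2\delta_{\mu_1}$ is the time-$t=\lambda_2$ point of a geodesic, with $r_1=tD$ and $r_2=(1-t)D$. The trigonometric identity $\frac{tD}{t_\kappa(tD)}\cdot\ldots$ turns the weighted coefficients into $\sigma^{(1-t)}_{\kappa}(D)$ and $\sigma^{(t)}_{\kappa}(D)$. To obtain a single geodesic for all $t$, we use a dyadic midpoint construction plus compactness, exactly as in the proof that midpoint $\CDe$ implies $\CDe$.

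\emph{Step 2.} We show that $\BCD(K,N)$ forces essential non-branching. This is the heart of the matter. The paper's later sections do this, together with Lemma~\ref{lem:finite-to-infinite}, which passes to $\BCD(K,\infty)$ and $K$-convexity.

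\emph{Step 3.} We show that $\BCD$ forces infinitesimal Hilbertianity; this is the main obstacle. The plan is to localize by blow-up. $\BCD(K,N)$ is stable under pmGH convergence, since the almost-BCD stability mentioned in the abstract covers it with zero error. Tangent cones therefore satisfy $\BCD(0,N)$. By the rigidity for normed spaces \cite{HanLiuRigidity}, three-point barycenter convexity of entropy forces the norm to be Euclidean. One must, however, transfer Hilbertianity from tangents back to $X$. The alternative we would try first is more direct. Take three marginals that are small translates of a bump in directions $v_1,v_2,v_1+v_2$ (via gradient flows of test functions $f,g$). Compute the first-order expansion of $\Ent_\mm$ at the barycenter, and read off that \eqref{eq:finite-bcd}, at second order in the displacement, implies the parallelogram law for $|D(f\pm g)|^2$ $\mm$-a.e. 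This is the quantitative cotangent estimate promised in the abstract, with error zero. The parallelogram law for $|D\cdot|$ gives quadraticity of $\Ch$, hence infinitesimal Hilbertianity. Then $\CDe(K,N)$ plus Hilbertianity is $\RCD^*(K,N)$, which equals $\RCD(K,N)$ by \cite{CavallettiMilman,LiGlobalization}. The delicate points are justifying the second-order entropy expansion in a nonsmooth space and controlling the barycenter's location, for which we would rely on the essential non-branching from Step 2 to get transport maps.
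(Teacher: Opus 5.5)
Your skeleton is the right one: the forward direction by citation, and the reverse direction via $\CDe(K,N)$, essential non-branching, infinitesimal Hilbertianity, and then $\RCD^*=\RCD$. But the two steps that carry all the weight are not argued, and the mechanisms you suggest for them would not work as stated.

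\textbf{Step~2.} Passing to $\BCD_0(K,\infty)$ only gives $K$-convexity of $\Ent_\mm$ along \emph{some} geodesic, and that cannot yield essential non-branching. For example, $(\R^2,\|\cdot\|_\infty,\cL^2)$ satisfies $\CD(0,2)$, yet it carries optimal dynamical plans between absolutely continuous measures that are concentrated on branching geodesics. The Rajala--Sturm argument needs the inequality along \emph{every} geodesic. The missing idea is that the barycentric condition upgrades itself to every barycenter (Proposition~\ref{prop:every-barycenter}):
\begin{enumerate}
\item For an arbitrary $\nu\in\Bary(\Omega)$, first show $\Ent_\mm(\nu)<\infty$. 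Approximate $\nu$ by $\zeta_j\in\mathrm D(\Ent_\mm)$ and apply the condition to $(1-\varepsilon_j)\Omega+\varepsilon_j\delta_{\zeta_j}$. Then use lower semicontinuity of entropy, which itself rests on the Gaussian volume growth forced by the condition.
\item Observe that $\nu$ is the \emph{unique} barycenter of $(1-\varepsilon)\Omega+\varepsilon\delta_\nu$, so the condition applies at $\nu$. Letting $\varepsilon\downarrow0$ gives the inequality at $\nu$.
\end{enumerate}
This gives strong $\CD(K,\infty)$, hence essential non-branching. Uniqueness of geodesics between absolutely continuous measures then identifies the BCD barycenter with the geodesic point, which gives strong $\CDe(K,N)$ directly; your dyadic construction becomes unnecessary. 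The same every-barycenter property is needed again in Step~3.

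Your fallback sketch for the forward direction has a similar problem. Scalar Jacobian bounds along each transport from $\bar\mu$ do not combine into \eqref{eq:finite-bcd} without a matrix relation like $\sum_i\lambda_iG_i\mathrm DT_i=G$. Since you primarily cite the result, this does not break the proof.

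\textbf{Step~3, route (a).} This breaks earlier than the point you flag. Tangents of a space known only to be $\CDe(K,N)$ are not known to be normed spaces, so normed-space rigidity cannot be invoked. Even if they were, nothing in your plan ties a tangent norm to $|\mathrm Df|$ on $X$. Separately, the stability theorem is for the dimension-free $\BCD_0(\cdot,\infty)$, so tangents inherit $\BCD_0(0,\infty)$, not $\BCD(0,N)$; this is harmless.

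\textbf{Step~3, route (b).} This fails at leading order. In a strictly convex normed space, the barycenter of translates of a bump is a translate with the same entropy. The inequality therefore holds with equality for every such norm, so translations cannot detect the parallelogram law.

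\textbf{What the paper does instead.}
\begin{enumerate}
\item It first obtains properness, local doubling and a local Poincar\'e inequality, using Bishop--Gromov from strong $\CDe(K,N)$, Li's globalization and Rajala's theorem.
\item It applies the Cheeger--Kleiner--Schioppa tangent theorem. At a.e.\ $x$, a measured tangent $Y$ admits a metric submersion $\widehat\phi$ onto $(T_xX,\|\cdot\|_x)$. Its dual norm is Cheeger's cotangent norm, with $\|\mathrm D_xf\|_{x,*}=|\mathrm Df|(x)$; this is the transfer back to $X$. The tangent also carries Alberti representations of $\mh$ by lines on which $\widehat r=r\circ\widehat\phi$ is affine.
\item Along these lines it compresses $\nu$ in a direction realizing $\|r\|_{x,*}$. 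The Kantorovich potential is $\alpha\widehat r^{\,2}/2$, and the entropy increase is at most $-\log(1-\alpha\|r\|_{x,*}^2)$, computed by a one-dimensional change of variables and data processing (Lemma~\ref{lem:tangent-line-entropy}).
\item It chooses the four potentials $\pm\frac\varepsilon2\widehat{p+q}^{\,2}$, $\pm\frac\varepsilon2\widehat{p-q}^{\,2}$, $\mp\varepsilon\widehat p^{\,2}$, $\mp\varepsilon\widehat q^{\,2}$, which sum to zero. By Kantorovich duality this makes $\nu$ a barycenter without ever locating one.
\item The entropy inequality at $\nu$, valid because it holds at every barycenter, gives the parallelogram identity for $\|\cdot\|_{x,*}$ at first order in $\varepsilon$.
\end{enumerate}
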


The implication from RCD to BCD follows from a gradient-flow argument for
Wasserstein barycenters \cite[Theorem~1.5]{HanLiuZhuBCD}.  For the reverse
implication, Theorem~\ref{thm:enb} shows that the space is essentially
non-branching, and
Proposition~\ref{prop:finite-pi} proves the strong entropic
curvature--dimension condition, local doubling, and local Poincar\'e (PI)
estimates.  We then use the tangent theorem of
Cheeger--Kleiner--Schioppa \cite[Theorem~1.12]{CKS}.  It connects a pointed
measured tangent of the PI space to its finite-dimensional Banach tangent
fiber and provides, in every Banach direction, an Alberti representation by
complete geodesic lines.  The pointed stability theorem transfers the almost
BCD condition to these measured tangents.
We use four marginals to obtain the polarization estimate in
Theorem~\ref{thm:quantitative-hilbertianity}.  The construction is analogous
to the normed-space argument in \cite{HanLiuRigidity}.  When $\delta=0$,
the estimate gives the parallelogram identity for the cotangent norm.

We also consider a dimension-free inequality with an additive error,
motivated by Ambrosio's question.  As in the $(\varepsilon,\delta)$-weak
quadruple condition of \cite[Definition~3.1]{HanYinWeakQuadruple}, we add an
error to an exact comparison inequality.

\begin{definition}[$\delta$-almost BCD]\label{def:almost-bcd}
Let $(X,\dist,\mm)$ be a complete separable geodesic metric measure space
with a locally finite Radon measure with full support.  Fix $K\in\R$ and
$\delta\ge0$.  Let
$\Omega=\sum_{i=1}^k\lambda_i\delta_{\mu_i}$ be a finitely supported
probability measure on $\Pp(X)$ with
$\mu_i\in\mathrm D(\Ent_\mm)$.  The space $(X,\dist,\mm)$ satisfies
$\delta$-almost $\BCD(K,\infty)$, written
$\BCD_\delta(K,\infty)$, if every such $\Omega$ has a Wasserstein barycenter
$\bar\mu\in\Bary(\Omega)$ satisfying
\begin{equation}\label{eq:intro-def}
\Ent_\mm(\bar\mu)
\le \sum_i\lambda_i\Ent_\mm(\mu_i)
-\frac K2\Var(\Omega)+\delta.
\end{equation}
\end{definition}
\begin{remark}
	The definition includes both the existence of a Wasserstein barycenter and
	\eqref{eq:intro-def}.  On a proper space, the existence statement alone is
	automatic by \cite[Theorem~2]{LeGouicLoubes}.
	The case $\delta=0$ is exact dimension-free BCD restricted to
	marginals in $\mathrm D(\Ent_\mm)$.
	For $\Omega=(1-t)\delta_{\mu_0}+t\delta_{\mu_1}$, one has
	$\Var(\Omega)=t(1-t)W_2^2(\mu_0,\mu_1)$.  Thus the two-point case of
	\eqref{eq:intro-def} is the entropy $K$-convexity inequality with an
	additive error $\delta$.
	The error is invariant under the changes
	$(\dist,K)\mapsto(a\dist,a^{-2}K)$ and $\mm\mapsto c\mm$, for $a,c>0$.
\end{remark}

To state the infinitesimal conclusion, for a finite-dimensional normed space
$(E^*,\|\cdot\|_*)$ define the normalized parallelogram defect
\[
\mathfrak p_{\mathrm{par}}(E^*):=
\sup_{p,q\in E^*, (p,q)\ne(0,0)}
\frac{\left|
\|p+q\|_*^2+\|p-q\|_*^2-2\|p\|_*^2-2\|q\|_*^2
\right|}
{\max\{\|p+q\|_*^2,\|p-q\|_*^2,
2\|p\|_*^2,2\|q\|_*^2\}}.
\]
 In our applications, $E^*$ is a
cotangent space and $\|\cdot\|_*$ is its norm.

\begin{maintheorem}\label{thm:intro-almost}
Let $(X,\dist,\mm)$ be a complete separable geodesic metric measure space
with a locally finite Radon measure with full support.  Suppose that $X$
satisfies $\BCD_\delta(K,\infty)$.
\begin{enumerate}[label=\textup{(\roman*)},leftmargin=2.4em]
\item For each $\Omega$ as in Definition~\ref{def:almost-bcd}, every
$\nu\in\Bary(\Omega)$ satisfies \eqref{eq:intro-def}.
\item If $\delta<\frac12\log2$, then $(X,\dist,\mm)$ is essentially
non-branching.
\item If, in addition, $X$ is locally doubling and supports a local weak
$(1,1)$-Poincar\'e inequality, then for $\mm$-almost every $x$,
\[
 \mathfrak p_{\mathrm{par}}(T_x^*X)\le8\sqrt\delta.
\]
\end{enumerate}
\end{maintheorem}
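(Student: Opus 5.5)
For part (i), the plan is to prove that \eqref{eq:intro-def} holds at every barycenter by a perturbation argument. Fix $\Omega$ and any $\nu\in\Bary(\Omega)$. Choose optimal couplings $\gamma_i\in\Pi(\nu,\mu_i)$, glue them along $\nu$ into a multi-marginal plan, and for small $\varepsilon>0$ form the enlarged family
\[
\Omega_\varepsilon=(1-\varepsilon)\delta_{\nu_\varepsilon}+\varepsilon\,\Omega ,
\]
where $\nu_\varepsilon\in\mathrm D(\Ent_\mm)$ approximates $\nu$. The key point is that when $\nu_\varepsilon=\nu$, the point $\nu$ is the \emph{unique} barycenter of $(1-\varepsilon)\delta_\nu+\varepsilon\Omega$. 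Indeed, any barycenter $\eta$ must satisfy $W_2(\eta,\nu)=0$; this follows by comparing with $\Var(\Omega)$ and using strict convexity of the term $(1-\varepsilon)W_2^2(\cdot,\nu)$ with its zero at $\nu$. Applying \eqref{eq:intro-def} to this family at its unique barycenter gives
\[
\Ent(\nu)\le(1-\varepsilon)\Ent(\nu)+\varepsilon\textstyle\sum\lambda_i\Ent(\mu_i)-\tfrac K2\varepsilon\Var(\Omega)+\delta .
\]
This is too weak as it stands, because $\delta$ is not scaled by $\varepsilon$. So instead I use the decisive device: apply the $\delta$-inequality to the family $\Omega$ with the marginals replaced by many independent copies, i.e.\ a tensorization/iteration trick. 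More concretely, I expect the intended argument to be the stated one applied with $\nu$ first approximated in entropy and then lower semicontinuity used. The main obstacle is that $\nu$ need not have finite entropy; if $\Ent(\nu)=+\infty$, this must be ruled out, and I would do so via the same uniqueness trick combined with lower semicontinuity of $\Ent$ along $W_2$-approximations $\nu_\varepsilon\to\nu$ whose barycenters converge to $\nu$.

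For part (ii), I would argue by contradiction. Suppose branching occurs on a set of positive measure. Following the standard Rajala--Sturm scheme, extract $\mu_0$ and two measures $\mu_1,\mu_1'$, all with finite entropy, such that some geodesics from $\mu_0$ toward $\mu_1$ and toward $\mu_1'$ share a common intermediate portion and then split. Restricting to a piece where the midpoint measure $\mu_{1/2}$ is common, consider the barycenter of $\Omega=\tfrac12\delta_{\mu_0}+\tfrac12\delta_{\frac12(\mu_1+\mu_1')}$. By (i), every barycenter satisfies \eqref{eq:intro-def}, including the one obtained by mixing the branched geodesics, which has density doubled relative to its constituents. The mixing gains entropy by $\log 2$ on the branched part, while the endpoints contribute only $\tfrac12\log 2$. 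Concentrating the measures and rescaling, so that the $K$-term vanishes by the scale invariance noted in the Remark, forces $\tfrac12\log2\le\delta$, which is a contradiction. The delicate step is arranging the exact factor $\tfrac12\log2$, which requires choosing the branching pieces with equal mass and disjoint endpoints.

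For part (iii), I would follow the outline given after Theorem~\ref{thm:main-rigidity}. First pass to a measured tangent at an $\mm$-typical point, using \cite[Theorem~1.12]{CKS} and pointed stability of $\BCD_\delta$, so that the tangent is a normed space whose dual norm is that of $T_x^*X$ and which carries Lebesgue measure. Then apply the four-marginal construction of \cite{HanLiuRigidity}: take translates of a small uniform measure along $\pm p^\sharp,\pm q^\sharp$ and compute both $\Var$ and the barycenter entropy. The deficit is governed by the parallelogram defect, and it is bounded by $\delta$ times the appropriate normalization after optimizing over the scale; taking a square root yields $8\sqrt\delta$. The main obstacle is the quantitative comparison converting the entropy gap into the normalized defect $\mathfrak p_{\mathrm{par}}$.
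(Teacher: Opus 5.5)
You have found the right device for part (i), adjoining $\nu$ as an atom so that it becomes the unique barycenter, but your weights are backwards, and the ``tensorization'' detour is unnecessary. Uniqueness holds for \emph{every} weight in $(0,1)$ on $\delta_\nu$. So take $\Omega^\varepsilon=(1-\varepsilon)\Omega+\varepsilon\delta_\nu$. Then $\Var(\Omega^\varepsilon)=(1-\varepsilon)\Var(\Omega)$, and the inequality rearranges to $\Ent_\mm(\nu)\le\sum_i\lambda_i\Ent_\mm(\mu_i)-\frac K2\Var(\Omega)+\delta/(1-\varepsilon)$. Letting $\varepsilon\downarrow0$ finishes; in your parametrization, just let $\varepsilon\uparrow1$.

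The finite-entropy step needs the same care. The approximant $\zeta_j\in\mathrm D(\Ent_\mm)$ of $\nu$ must carry a small weight $\varepsilon_j$ with $\varepsilon_j|\Ent_\mm(\zeta_j)|\to0$. If $\Ent_\mm(\nu)=+\infty$, lower semicontinuity forces $\Ent_\mm(\zeta_j)\to+\infty$, so putting weight $1-\varepsilon$ on it is useless. Convergence of the barycenters $\nu_j$ comes from $W_2(\nu_j,\zeta_j)\le W_2(\nu,\zeta_j)$. This follows from minimality and $G(\nu_j)\ge G(\nu)$, where $G=\sum_i\lambda_iW_2^2(\cdot,\mu_i)$. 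The lower semicontinuity itself is not free for a locally finite $\mm$; the paper derives Gaussian volume growth from the two-point $\BCD_\delta$ inequality.

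Part (ii) has a real gap. One application of the inequality at the midpoint barycenter of $\frac12\delta_{\mu_0}+\frac12\delta_{(\mu_1+\mu_1')/2}$ gives only the upper bound $\Ent_\mm(\mu_{1/2})\le\frac12\Ent_\mm(\mu_0)+\frac14\Ent_\mm(\mu_1)+\frac14\Ent_\mm(\mu_1')-\frac12\log2+\delta$ (take $K=0$). This is perfectly consistent, e.g.\ when entropy is strictly convex along the branches. Concentrating and rescaling creates no competing lower bound, so nothing forces $\frac12\log2\le\delta$.

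What is missing is the Rajala--Sturm loop. Let $A_\varepsilon$ be the entropy of the common marginal at time $t$, and $B_\varepsilon,D_\varepsilon$ those of the mutually singular branches at $t+\varepsilon$. Apply the inequality to the mixture on $[0,t+\varepsilon]$ at $t$, and to each branch on $[t,1]$ at $t+\varepsilon$. Adding with weights $(t+\varepsilon)(1-t-\varepsilon)$, $\frac{t(1-t-\varepsilon)}2$, $\frac{t(1-t-\varepsilon)}2+\varepsilon$ cancels $A_\varepsilon,B_\varepsilon$. It leaves $\varepsilon(D_\varepsilon-L_\varepsilon)\le-t(1-t-\varepsilon)(\log2-2\delta)+O(\varepsilon)$. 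The contradiction needs $\varepsilon\downarrow0$ together with the selection estimate $D_\varepsilon\ge\log(\varepsilon/(C_2w_\varepsilon))$ against the endpoint bound $L_\varepsilon=\log(C_1/w_\varepsilon)$. Then the possibly unbounded $\log w_\varepsilon$ cancels and the left side tends to $0$.

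Part (iii) rests on two claims that fail. First, the Cheeger--Kleiner--Schioppa theorem does not make the tangent a normed space with Lebesgue measure; tangents of PI spaces need not be normed, and nothing in the hypotheses gives this. It gives a tangent $Y$ with a metric submersion $\widehat\phi:Y\to(T_xX,\|\cdot\|_x)$. For each unit direction $v$ it also gives an Alberti representation of $\mh$ by complete geodesic lines along which $\widehat\phi$ moves affinely with velocity $v$. Second, translates detect nothing even in a genuine normed space. A translate of $\nu$ is a barycenter of translates of $\nu$ and has the same entropy, so the inequality there is vacuous.

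The paper instead realizes the quadratic Kantorovich potentials $\frac\alpha2\widehat r^{\,2}$, with $\widehat r=r\circ\widehat\phi$, by affine reparametrizations of the CKS lines, which have Jacobian $1-\alpha\|r\|_{x,*}^2$. Optimality is checked with the $1$-Lipschitz function $\widehat r/\|r\|_{x,*}$. Data processing then gives $\Ent_{\mh}(\mu_{\alpha,r})\le\Ent_{\mh}(\nu)-\log(1-\alpha\|r\|_{x,*}^2)$.

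The four potentials $\frac{s\varepsilon}2\widehat{p+q}^{\,2}$, $\frac{s\varepsilon}2\widehat{p-q}^{\,2}$, $-s\varepsilon\widehat p^{\,2}$, $-s\varepsilon\widehat q^{\,2}$ sum to zero identically, because $\widehat{p\pm q}=\widehat p\pm\widehat q$. So by Kantorovich duality $\nu$ itself is a barycenter of the four transported measures, and part (i) applies at $\nu$. The duality certificate is exactly ``Hilbertian'', while the dual norm enters only through the Jacobians. This yields $\log(1-s\varepsilon\|p+q\|_{x,*}^2)+\log(1-s\varepsilon\|p-q\|_{x,*}^2)+\log(1+2s\varepsilon\|p\|_{x,*}^2)+\log(1+2s\varepsilon\|q\|_{x,*}^2)\le4\delta$. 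Optimizing the sign $s$ and the scale $\varepsilon$ gives the bound $8\sqrt\delta$.
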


The sufficient bound $\delta<\frac12\log2$ in part~\textup{(ii)} comes from
the entropy gap $\log2$ for an equal mixture of two mutually singular
probability measures in the Rajala--Sturm argument
\cite[Step~7]{RajalaSturm}.  We do not determine the optimal range of
$\delta$ for essential non-branching.

Part~\textup{(iii)} bounds the pointwise parallelogram defect by the error in
the entropy inequality and holds for every $\delta\ge0$.
Corollary~\ref{cor:quantitative-cheeger} gives the corresponding estimates
for the Cheeger energy.

The parameter $\delta$ can be estimated explicitly for some non-Riemannian normed
spaces.  Let $F$ be a
smooth strongly convex Minkowski norm on $\R^n$, set $h=F^2/2$, and write
$g_v:=\mathrm D^2h(v)$ for $v\ne0$; this is the fundamental tensor of the
Minkowski structure \cite[Chapter~1]{BaoChernShen}.
  With the convention $\inf\varnothing=+\infty$, set
\[
\Delta_{\mathrm{BCD}}(F)
:=\inf \Big\{\delta\ge0:(\R^n,F,\cL^n)\text{ satisfies }
\BCD_\delta(0,\infty)\Big\}.
\]
Theorem~\ref{thm:intro-almost} bounds $\Delta_{\mathrm{BCD}}(F)$ from below
in terms of $\mathfrak p_{\mathrm{par}}(F^*)$.  For an upper bound, we use
the Jacobian approach to barycenter entropy inequalities
\cite[Section~7]{AguehCarlier}.  With a Minkowski cost, the fundamental
tensors enter the Jacobian identities.  The resulting entropy estimate
contains their Jensen gap for the concave function $A\mapsto\log\det A$;
see \cite[Section~4.2]{BhatiaPositive} for the determinant inequalities.
We therefore define
\begin{equation}\label{eq:intro-cf}
c_F(F):=
\sup_{\substack{k\ge2,\ \lambda_i>0,\ \sum_i\lambda_i=1\\v_i\ne0}}
\left\{
\log\det\left(\sum_i\lambda_i g_{v_i}\right)
-\sum_i\lambda_i\log\det g_{v_i}
\right\}.
\end{equation}
This gap vanishes when the fundamental tensor is independent of direction.
The supremum may also be taken over all probability measures on the compact
family of fundamental tensors; see \eqref{eq:cf-prob}.
Proposition~\ref{prop:smooth-entropy} derives the entropy error from this
matrix expression, and Theorem~\ref{thm:intro-finsler} gives the resulting
upper bound.

Write $|\cdot|$ for the Euclidean norm and $S^{n-1}$ for its unit sphere.
We next consider non-Riemannian perturbations of the Euclidean norm.  Let $\psi\in C^\infty(S^{n-1})$ be even and, for
sufficiently small $|\tau|$, set
\[
F_\tau(v)^2=|v|^2\left(1+\tau\psi\left(\frac v{|v|}\right)\right)
\quad(v\ne0),
\qquad F_\tau(0)=0.
\]
If $\psi$ is the restriction of a quadratic form, then $F_\tau$ is induced
by an inner product.  The theorem concerns the complementary case, in which
$\psi$ is not the restriction of a quadratic form.

\begin{maintheorem}\label{thm:intro-finsler}
Every smooth strongly convex Minkowski norm $F$ on $\R^n$ satisfies
\[
 \frac1{64}\mathfrak p_{\mathrm{par}}(F^*)^2
 \le\Delta_{\mathrm{BCD}}(F)
 \le c_F(F).
\]
The space $(\R^n,F,\cL^n)$ satisfies
$\BCD_{c_F(F)}(0,\infty)$.  The three conditions
\[
 \mathfrak p_{\mathrm{par}}(F^*)=0,
 \qquad
 \Delta_{\mathrm{BCD}}(F)=0,
 \qquad
 c_F(F)=0
\]
are equivalent, and they hold if and only if $F$ is induced by an inner
product.
For every fixed non-quadratic perturbation $\psi$ as above,
there exist $0<c_\psi\le C_\psi<\infty$ and $\tau_0>0$ such that
\[
c_\psi\tau^2
\le \Delta_{\mathrm{BCD}}(F_\tau)
\le c_F(F_\tau)
\le C_\psi\tau^2
\qquad (|\tau|<\tau_0).
\]
\end{maintheorem}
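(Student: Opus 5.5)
The plan has three parts: the lower bound from Theorem~\ref{thm:intro-almost}(iii), the upper bound from a Jacobian computation, and the two-sided $\tau^2$ asymptotics.

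\textbf{Lower bound.} The space $(\R^n,F,\cL^n)$ is doubling and supports a $(1,1)$-Poincar\'e inequality, since $F$ is comparable to $|\cdot|$. Its cotangent norm at every point is the dual norm $F^*$, because the Cheeger differential of a Lipschitz function is its ordinary differential and the pointwise slope is $F^*(\mathrm D f)$. So if $\BCD_\delta(0,\infty)$ holds, Theorem~\ref{thm:intro-almost}(iii) gives $\mathfrak p_{\mathrm{par}}(F^*)\le 8\sqrt\delta$, i.e.\ $\delta\ge\mathfrak p_{\mathrm{par}}(F^*)^2/64$. Taking the infimum over admissible $\delta$ yields the lower bound on $\Delta_{\mathrm{BCD}}(F)$; if no $\delta$ is admissible, $\Delta_{\mathrm{BCD}}=+\infty$ and there is nothing to prove.

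\textbf{Upper bound.} I show $\BCD_{c_F(F)}(0,\infty)$ directly, via Proposition~\ref{prop:smooth-entropy}, following the Agueh--Carlier scheme. First reduce by density and lower semicontinuity of entropy to compactly supported smooth positive densities $\mu_i$; this approximation step must be justified through stability of barycenters, so that a limit barycenter inherits the inequality.

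For the cost $h(x-y)$ with $h=F^2/2$, the barycenter $\bar\mu$ is characterized by maps $T_i=\mathrm{id}-\nabla h^*(\nabla\varphi_i)$ with $\sum_i\lambda_i\nabla h^*(\nabla\varphi_i)=0$. Equivalently, $\bar\mu=(\sum_i\lambda_i S_i)_\#\mu_1$-type pushforwards, with Jacobians $\mathrm D T_i=g_{v_i}^{-1}A_i$-type factors, where $A_i\ge0$ comes from $c$-convexity. Writing $\mathrm{id}=\sum_i\lambda_i T_i$ on the support of $\bar\mu$, I get
\[
\log\det(\text{Jacobian relation})\ \ge\ \sum_i\lambda_i\log\det(\cdots)
\;-\;\Big[\log\det\Big(\sum_i\lambda_i g_{v_i}\Big)-\sum_i\lambda_i\log\det g_{v_i}\Big].
\]
This comes from concavity of $\log\det$ applied to the $g$-weighted expression, with the Jensen gap bounded pointwise by $c_F(F)$. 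Integrating against $\bar\mu$ and using the change of variables $\rho_i(T_i)\det \mathrm D T_i=\bar\rho$ gives $\Ent(\bar\mu)\le\sum_i\lambda_i\Ent(\mu_i)+c_F(F)$.

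The main obstacle is making the Jacobian identity rigorous. Here $h$ is only $C^{1,1}$ at $0$, the maps are only a.e.\ differentiable (Alexandrov), and the set where $v_i=0$ needs separate treatment; there $g$ is undefined, and I would use the compact family of fundamental tensors, as in \eqref{eq:cf-prob}, to handle limits. I take these details as supplied by Proposition~\ref{prop:smooth-entropy}.

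\textbf{Equivalences and asymptotics.} If $F$ comes from an inner product, then $g_v$ is constant, so $c_F(F)=0$, and $F^*$ is Euclidean, so $\mathfrak p_{\mathrm{par}}=0$. The chain $\mathfrak p^2/64\le\Delta\le c_F$ gives $c_F=0\Rightarrow\Delta=0\Rightarrow\mathfrak p=0$. If $\mathfrak p_{\mathrm{par}}(F^*)=0$, then $F^*$ satisfies the parallelogram law, so it is Euclidean (Jordan--von Neumann), hence $F$ is too; this closes the cycle.

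For $F_\tau$, expand $g_v^\tau=I+\tau B_\psi(v)+O(\tau^2)$ uniformly on $S^{n-1}$. The second-order Taylor expansion of $\log\det$ shows the Jensen gap equals $\tfrac{\tau^2}{2}\,\mathrm{Var}_\lambda$-type quadratic in $B_\psi(v_i)$ plus $O(\tau^3)$, hence $c_F(F_\tau)\le C_\psi\tau^2$.

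For the lower bound, $F_\tau^*(p)^2=|p|^2(1-\tau\psi(p/|p|))+O(\tau^2)$. The parallelogram defect of $|p|^2\psi(p/|p|)$ is nonzero for some $p,q$ precisely because $\psi$ is not quadratic: a 2-homogeneous continuous function satisfying the parallelogram law is a quadratic form. Hence $\mathfrak p_{\mathrm{par}}(F_\tau^*)\ge c|\tau|$ for small $\tau$, which gives $\Delta_{\mathrm{BCD}}(F_\tau)\ge c^2\tau^2/64$.
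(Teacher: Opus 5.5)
Your lower bound, the equivalence cycle and the $\tau^2$ asymptotics follow the paper and are fine. Routing the cycle through $\mathfrak p_{\mathrm{par}}=0$ and Jordan--von Neumann even avoids the strict-concavity step of Proposition~\ref{prop:cf-rigidity}. The gap is in the upper bound $\BCD_{c_F(F)}(0,\infty)$, and it has two parts.

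First, the balance relation you write is wrong. $\sum_i\lambda_i\nabla h^*(\nabla\varphi_i)=0$, i.e.\ $\mathrm{id}=\sum_i\lambda_iT_i$, is the Euclidean relation and fails for non-quadratic $h$. The first-order condition for the cost $h(x-y)$ is $\sum_i\lambda_i\mathrm Dh(x-T_i(x))=0$. This differentiates to $\sum_i\lambda_iG_i\mathrm DT_i=G$ with each $G_i\mathrm DT_i$ symmetric. Only after normalizing to $G^{-1/2}(G_i\mathrm DT_i)G^{-1/2}$ does the gap $\log\det G-\sum_i\lambda_i\log\det G_i$ appear.

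Second, and more seriously, you cannot take the remaining details ``as supplied by Proposition~\ref{prop:smooth-entropy}''. Through Lemma~\ref{lem:jacobian-identities}, that proposition assumes $H\in C^2(\R^n)$. But $\mathrm D^2h(v)=g_v$ is zero-homogeneous, so $h$ is $C^2$ at the origin only if $g_v$ is constant, i.e.\ only in the Euclidean case. The proposition therefore never applies to $h$ when $F$ is non-Euclidean, and the points $T_i(x)=x$ you flag are exactly where it breaks.

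The missing step is the paper's smoothing argument:
\begin{enumerate}
\item Replace $h$ by $h_\varepsilon=h*\vartheta_\varepsilon-(h*\vartheta_\varepsilon)(0)$. This is smooth, satisfies $aI\le\mathrm D^2h_\varepsilon\le bI$ and $\|h_\varepsilon-h\|_\infty=O(\varepsilon^2)$, and each Hessian $\mathrm D^2h_\varepsilon(z)$ is the barycenter of a probability measure on $\mathcal G_F$.
\item By concavity of $\log\det$ and \eqref{eq:cf-prob}, the log-determinant defect of $h_\varepsilon$ stays below $c_F(F)$ (Lemma~\ref{lem:defect-smoothing}).
\item Apply Proposition~\ref{prop:smooth-entropy} to $h_\varepsilon$.
\item Let $\varepsilon\to0$, as in Theorem~\ref{thm:minkowski-upper}. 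The $h_\varepsilon$-barycenters are $O(\varepsilon^2)$-almost minimizers of the $F$-barycenter functional. The multi-marginal plans are relatively compact in $W_2$. The point-barycenter maps satisfy $\|b_{h_\varepsilon}-b_h\|_\infty=O(\varepsilon)$ (Lemma~\ref{lem:pointbar-stability}). Lower semicontinuity of entropy then passes the bound to a genuine $F$-Wasserstein barycenter.
\end{enumerate}
Your remark about using the compact family of fundamental tensors points toward Lemma~\ref{lem:defect-smoothing}. Without the approximating costs and the limit, though, there is no object to which the smooth Jacobian argument applies.
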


Theorem~\ref{thm:intro-finsler} includes non-Riemannian examples and proves
that the exponent in Theorem~\ref{thm:intro-almost}\textup{(iii)} is sharp.

If $0<\delta<\frac12\log2$, Theorem~\ref{thm:intro-almost} gives essential
non-branching, while Theorem~\ref{thm:pointed-stability} gives pointed
measured Gromov--Hausdorff stability.  By
Theorem~\ref{thm:intro-finsler}, these classes contain non-Riemannian Finsler
spaces.  Within the barycenter curvature--dimension framework, this provides
an affirmative answer to the open problem posed by Ambrosio in his 2018 ICM
survey \cite[Section~9, first open problem, p.~331]{AmbrosioICM}.

\subsection*{Organization}

Section~\ref{sec:almost} proves Gaussian volume growth and entropy lower
semicontinuity, the inequality for every barycenter, a quantitative
non-branching criterion, the finite-to-infinite lemma, and measured Gromov--Hausdorff
stability.  Section~\ref{sec:exact} proves the strong
entropic curvature--dimension condition and the tangent polarization
estimate.  Section~\ref{sec:finsler} proves the upper and lower estimates for
smooth Minkowski norms and computes their order near Euclidean norms.

\section{Almost BCD and essential non-branching}\label{sec:almost}

\subsection{Volume growth and entropy}

Let $\operatorname{Geo}(X)$ be the space of constant-speed geodesics
$\gamma:[0,1]\to X$, equipped with the uniform distance, and let
$e_t(\gamma)=\gamma_t$ be the evaluation map.
An optimal dynamical plan between $\mu_0,\mu_1\in\Pp(X)$ is a probability
measure $\boldsymbol\pi$ on $\operatorname{Geo}(X)$ such that
$(e_0,e_1)_\#\boldsymbol\pi$ is an optimal coupling of $\mu_0$ and $\mu_1$.
Every $W_2$-geodesic has the representation
$\mu_t=(e_t)_\#\boldsymbol\pi$ for such a plan, and conversely
\cite[Theorem~3.10]{AmbrosioGigliGuide}.

\begin{lemma}\label{lem:entropy-regularity}
If $(X,\dist,\mm)$ satisfies $\BCD_\delta(K,\infty)$, then, for every
$o\in X$, there are constants $C,c>0$ such that
\begin{equation}\label{eq:gaussian-volume-growth}
 \mm(B_R(o))\le C e^{cR^2}\qquad~~\forall R>0.
\end{equation}
Thus $\int_Xe^{-a\dist^2(x,o)}\d\mm(x)<\infty$ for all sufficiently
large $a$.  The entropy $\Ent_\mm$ is sequentially lower semicontinuous under
$W_2$-convergence, and $\Ent_\mm(\mu)>-\infty$ for every $\mu\in\Pp(X)$.
\end{lemma}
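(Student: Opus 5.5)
We adapt Sturm's argument for $\CD(K,\infty)$ spaces (exponential volume growth from entropy convexity), using only the two-point case of \eqref{eq:intro-def}. By the Remark after Definition~\ref{def:almost-bcd}, for $\mu_0,\mu_1\in\mathrm D(\Ent_\mm)$ and $t\in(0,1)$ there is a $W_2$-midpoint-type measure $\mu_t$ with
\[
\Ent_\mm(\mu_t)\le(1-t)\Ent_\mm(\mu_0)+t\Ent_\mm(\mu_1)-\tfrac K2t(1-t)W_2^2(\mu_0,\mu_1)+\delta .
\]
The additive $\delta$ is harmless: only linear-in-entropy bounds are used, and $\delta$ enters every constant additively.

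\textbf{Volume growth.} Fix $o$ and $r>0$ small with $0<\mm(B_r(o))<\infty$ (full support and local finiteness). Put $\mu_0=\mm(B_r(o))^{-1}\mm|_{B_r(o)}$, so $\Ent_\mm(\mu_0)=-\log\mm(B_r(o))$.

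For $R$ large, consider the annulus $A_R=B_{2R}(o)\setminus B_R(o)$ with $\mm(A_R)>0$. (If $\mm(A_R)=0$ there is nothing to prove at that scale, and one may instead use $A_R\cap B_{R'}$ to stay locally finite.) Put $\mu_1=\mm(A_R)^{-1}\mm|_{A_R}$, so $\Ent_\mm(\mu_1)=-\log\mm(A_R)$.

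Take $t=1/(2R)$ heuristically, or more cleanly use a fixed comparison at scale. The intermediate $\mu_t$ lives in the set of $t$-intermediate points between $B_r(o)$ and $A_R$, which is contained in a ball $B_{r+4tR}(o)$. I choose $t=1/R$, so $\mu_t$ is supported in $B_{r+8}(o)$, and Jensen gives
\[
\Ent_\mm(\mu_t)\ge-\log\mm(B_{r+8}(o))>-\infty.
\]
Plugging in with $W_2^2\le(2R+r)^2$ gives
\[
-\log\mm(B_{r+8})\le-(1-t)\log\mm(B_r)-t\log\mm(A_R)+|K|\,\tfrac{t}{2}(3R)^2+\delta .
\]
Multiplying by $1/t=R$ yields $\log\mm(A_R)\le C_0R+C_1R^2$. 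Summing over dyadic annuli then gives \eqref{eq:gaussian-volume-growth}, and the Gaussian integrability follows immediately from a layer-cake/dyadic sum.

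\textbf{Entropy bounds and lower semicontinuity.} Take $a$ large so that $z:=\int e^{-a\dist^2(\cdot,o)}\d\mm<\infty$, and set $\gamma=z^{-1}e^{-a\dist^2(\cdot,o)}\mm$. Then
\[
\Ent_\mm(\mu)=\Ent_\gamma(\mu)-a\int\dist^2(\cdot,o)\d\mu-\log z .
\]
Since $\Ent_\gamma\ge0$, we get $\Ent_\mm(\mu)>-\infty$ whenever $\mu$ has finite second moment. For negative parts this identity needs care; it is standard once the positive part of $\rho\log\rho$ is controlled by the Gaussian weight.

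Under $W_2$-convergence $\mu_n\to\mu$, the term $\Ent_\gamma$ is weakly lower semicontinuous (Donsker--Varadhan duality), and the second moments converge. Hence $\Ent_\mm$ is sequentially lower semicontinuous.

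\textbf{Main obstacle.} The delicate step is the support inclusion for $\mu_t$ and the choice of $t$ that makes the quadratic term produce only $R^2$ growth. Here I rely on $\mu_t$ being a $W_2$-geodesic point (the barycenter of two marginals, per the Background), hence concentrated on $t$-intermediate points of $\supp\mu_0\times\supp\mu_1$.
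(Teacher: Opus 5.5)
Your overall route is the paper's. You run a Sturm-type two-point comparison between a normalized small ball and a distant set. You then use the Gaussian reweighting $\Ent_\mm=\Ent_{\mm_a}-a\int\dist^2(\cdot,o)\d\mu-\log Z_a$, nonnegativity of $\Ent_{\mm_a}$, and its narrow lower semicontinuity. The entropy half is fine. The volume-growth half has a genuine gap: it quietly assumes finiteness of measures that the lemma is supposed to establish.

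\begin{itemize}
\item \emph{The Jensen bound can be vacuous.} Nothing in the hypotheses says balls of moderate radius have finite measure. $X$ is not assumed proper; properness is only derived later, from doubling. Local finiteness only provides some small $s>r$ with $\mm(\overline B_s(o))<\infty$. Your bound $\Ent_\mm(\mu_t)\ge-\log\mm(B_{r+8}(o))$ gives nothing if $\mm(B_{r+8}(o))=\infty$. With $t=1/R$ the $t$-intermediate points spread over a ball of radius about $r+2$, and you have no control over its mass.
\item \emph{$\mu_1$ may not be admissible.} The measure $\mu_1=\mm(A_R)^{-1}\mm|_{A_R}$ lies in $\mathrm D(\Ent_\mm)$ only if $0<\mm(A_R)<\infty$. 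The dangerous case is $\mm(A_R)=\infty$, not $\mm(A_R)=0$. Intersecting with another ball $B_{R'}$ does not help, since that set too may have infinite measure.
\item \emph{Small scales are not covered.} $t=1/R$ is inadmissible for $R\le1$, so your dyadic sum has no finite base scale.
\end{itemize}

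Both problems are repaired exactly as in the paper.
\begin{itemize}
\item Test against an arbitrary Borel $A\subset B_R(o)$ with $0<\mm(A)<\infty$. The resulting bound on $\log\mm(A)$ does not depend on $A$, and $\sigma$-finiteness of $\mm$ lets you exhaust $B_R(o)$ by such sets.
\item Take $t=(s-r)/(R+r)$. Then $\dist(o,\gamma_t)\le r+t(r+R)=s$, so the intermediate measure lies in $\overline B_s(o)$, a set of finite measure, and Jensen gives a finite lower bound.
\end{itemize}
Dividing by $t$ still costs only a factor linear in $R$. This gives $\log\mm(A)\le C_0(1+R^2)$ directly for every $R\ge s$, with no dyadic annuli needed.
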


\begin{proof}
Choose $0<r<s$ such that
$0<\mm(B_r(o))<\infty$ and $\mm(\overline B_s(o))<\infty$.
Fix $R\ge s$ and a Borel set $A\subset B_R(o)$ with
$0<\mm(A)<\infty$.  Set
\[
 \mu_0:=\frac{\mm|_{B_r(o)}}{\mm(B_r(o))},
 \qquad
 \mu_1:=\frac{\mm|_A}{\mm(A)},
 \qquad
 t:=\frac{s-r}{R+r}.
\]
 Apply $\BCD_\delta(K,\infty)$ to
$(1-t)\delta_{\mu_0}+t\delta_{\mu_1}$ and denote the resulting barycenter by
$\bar\mu_t$.  A two-point Wasserstein barycenter is a time-$t$ point of a
$W_2$-geodesic.  Its dynamical representation
\cite[Theorem~3.10]{AmbrosioGigliGuide} shows that
$\supp\bar\mu_t\subset\overline B_s(o)$, since
\[
 \dist(o,\gamma_t)
 \le \dist(o,\gamma_0)+t\dist(\gamma_0,\gamma_1)
 \le r+t(r+R)=s.
\]
The almost BCD inequality gives $\Ent_\mm(\bar\mu_t)<+\infty$.  Since
$\bar\mu_t$ is supported in the finite-measure set $\overline B_s(o)$,
Jensen's inequality yields
$\Ent_\mm(\bar\mu_t)\ge-\log\mm(\overline B_s(o))$.
Moreover,
$\Var((1-t)\delta_{\mu_0}+t\delta_{\mu_1})
=t(1-t)W_2^2(\mu_0,\mu_1)$.  Hence, with
$K_-:=\max\{-K,0\}$,
\begin{align*}
 \log\mm(A)
 &\le
 \frac{\log\mm(\overline B_s(o))
 -(1-t)\log\mm(B_r(o))+\delta}{t}
 +\frac{K_-}{2}(1-t)(R+r)^2\\
 &\le C_0(1+R^2),
\end{align*}
where $C_0$ is independent of $A$ and $R$.
Since a locally finite measure on a separable metric space is
$\sigma$-finite, we may exhaust $B_R(o)$ by sets $A$ of finite measure.
This proves \eqref{eq:gaussian-volume-growth}; enlarging $C$ also covers
$R<s$.

The volume bound and an annular decomposition show that
$Z_a:=\int_Xe^{-a\dist^2(x,o)}\d\mm(x)<\infty$ whenever $a$ is sufficiently
large.  Define the probability measure
$\mm_a:=Z_a^{-1}e^{-a\dist^2(\cdot,o)}\mm$.  For every $\mu\in\Pp(X)$,
\begin{equation}\label{eq:weighted-entropy-identity}
 \Ent_\mm(\mu)
 =\Ent_{\mm_a}(\mu)-\log Z_a
 -a\int_X\dist^2(x,o)\d\mu(x).
\end{equation}
Relative entropy with respect to the probability measure $\mm_a$ is
nonnegative and is lower semicontinuous under narrow convergence by its
variational formula
\cite[Section~4.1.1, especially (4.4)--(4.5)]{GigliMondinoSavare}.
The second moment in \eqref{eq:weighted-entropy-identity} is continuous under
$W_2$-convergence.  The last two assertions follow.
\end{proof}

\subsection{The inequality for every barycenter}
Although the definition requires the entropy inequality at only one
barycenter, it implies the same inequality at every barycenter.
\begin{proposition}
\label{prop:every-barycenter}
Assume $\BCD_\delta(K,\infty)$.  Let
$\Omega=\sum_i\lambda_i\delta_{\mu_i}$ with
$\mu_i\in\mathrm D(\Ent_\mm)$.  Then every $\nu\in\Bary(\Omega)$ satisfies
\begin{equation}\label{eq:all-barycenters}
 \Ent_\mm(\nu)\le\sum_i\lambda_i\Ent_\mm(\mu_i)
 -\frac K2\Var(\Omega)+\delta.
\end{equation}
\end{proposition}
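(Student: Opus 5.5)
The plan is to force the given barycenter $\nu$ to be (close to) the \emph{only} barycenter of a slightly perturbed problem, by adding a small atom at (an approximation of) $\nu$ itself. Then apply $\BCD_\delta(K,\infty)$ to the perturbed family, and pass to the limit using the lower semicontinuity of $\Ent_\mm$ from Lemma~\ref{lem:entropy-regularity}.

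Fix $\nu\in\Bary(\Omega)$ and write $F(\rho):=\sum_i\lambda_iW_2^2(\rho,\mu_i)$, so that $F(\nu)=\Var(\Omega)$. Since $\nu$ need not have finite entropy, we cannot add $\nu$ itself as a marginal. Instead, use the $W_2$-density of $\mathrm D(\Ent_\mm)$ in $\Pp(X)$, recorded in the conventions, to choose $\nu_n\in\mathrm D(\Ent_\mm)$ with $W_2(\nu_n,\nu)\to0$. Then choose $\varepsilon_n\in(0,1)$ with $\varepsilon_n\to0$ and $\varepsilon_n\Ent_\mm(\nu_n)\to0$, for instance $\varepsilon_n=1/(n(1+|\Ent_\mm(\nu_n)|))$. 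Set
\[
\Omega_n:=(1-\varepsilon_n)\Omega+\varepsilon_n\delta_{\nu_n}.
\]
All marginals of $\Omega_n$ lie in $\mathrm D(\Ent_\mm)$. Hence $\BCD_\delta(K,\infty)$ provides $\bar\nu_n\in\Bary(\Omega_n)$ with
\[
\Ent_\mm(\bar\nu_n)\le(1-\varepsilon_n)\sum_i\lambda_i\Ent_\mm(\mu_i)+\varepsilon_n\Ent_\mm(\nu_n)-\frac K2\Var(\Omega_n)+\delta .
\]

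The key step is to show $\bar\nu_n\to\nu$ in $W_2$ without any compactness. Let $G_n(\rho):=(1-\varepsilon_n)F(\rho)+\varepsilon_nW_2^2(\rho,\nu_n)$. Minimality of $\bar\nu_n$ together with $F\ge\Var(\Omega)$ gives
\[
(1-\varepsilon_n)\Var(\Omega)+\varepsilon_nW_2^2(\bar\nu_n,\nu_n)\le G_n(\bar\nu_n)\le G_n(\nu)=(1-\varepsilon_n)\Var(\Omega)+\varepsilon_nW_2^2(\nu,\nu_n).
\]
Therefore $W_2(\bar\nu_n,\nu_n)\le W_2(\nu,\nu_n)\to0$, and so $\bar\nu_n\to\nu$. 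The same chain also gives
\[
(1-\varepsilon_n)\Var(\Omega)\le\Var(\Omega_n)\le(1-\varepsilon_n)\Var(\Omega)+\varepsilon_nW_2^2(\nu,\nu_n),
\]
so $\Var(\Omega_n)\to\Var(\Omega)$ whatever the sign of $K$.

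Finally, take $\liminf$ in the displayed inequality. The right-hand side converges to $\sum_i\lambda_i\Ent_\mm(\mu_i)-\frac K2\Var(\Omega)+\delta$, because $\varepsilon_n\Ent_\mm(\nu_n)\to0$. By Lemma~\ref{lem:entropy-regularity}, $\Ent_\mm(\nu)\le\liminf_n\Ent_\mm(\bar\nu_n)$, which yields \eqref{eq:all-barycenters}.

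The main obstacle I expect is the possibility that $\Ent_\mm(\nu)=+\infty$. This is exactly why the approximation by $\nu_n$ is needed, together with the coupled choice of $\varepsilon_n$ that kills the term $\varepsilon_n\Ent_\mm(\nu_n)$. The $W_2$-stability of the perturbed barycenters then comes for free from the elementary comparison above.
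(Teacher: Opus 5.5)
Your proof is correct. It coincides with the first half of the paper's proof, but you carry that step through to the full inequality, whereas the paper extracts from it only finiteness of $\Ent_\mm(\nu)$.

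The shared step is: approximate $\nu$ by measures in $\mathrm D(\Ent_\mm)$, add a small atom there with $\varepsilon_n\Ent_\mm(\nu_n)\to0$, use the comparison $W_2(\bar\nu_n,\nu_n)\le W_2(\nu,\nu_n)$ to get $\bar\nu_n\to\nu$, and invoke lower semicontinuity. The paper, noting only that the variances stay bounded, concludes from this merely that $\Ent_\mm(\nu)<+\infty$, hence $\nu\in\mathrm D(\Ent_\mm)$.

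It then runs a second argument. Since $\nu$ now has finite entropy, it can be added as an atom itself. Then $\nu$ is the unique barycenter of $(1-\varepsilon)\Omega+\varepsilon\delta_\nu$, whose variance is $(1-\varepsilon)\Var(\Omega)$. Applying $\BCD_\delta(K,\infty)$ there and cancelling the finite term $\varepsilon\Ent_\mm(\nu)$ gives the bound with $\delta/(1-\varepsilon)$, and $\varepsilon\downarrow0$ finishes.

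Your two-sided estimate $(1-\varepsilon_n)\Var(\Omega)\le\Var(\Omega_n)\le(1-\varepsilon_n)\Var(\Omega)+\varepsilon_nW_2^2(\nu,\nu_n)$ is what makes that second step unnecessary. It gives $\Var(\Omega_n)\to\Var(\Omega)$ for either sign of $K$. So your route is a genuine streamlining: one limiting argument instead of two, with no need to establish finiteness of $\Ent_\mm(\nu)$ first.

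The paper's second step buys an exact identification of $\nu$ itself as the barycenter to which the condition applies. That uniqueness-forcing trick is convenient once finiteness is known, but it is not needed here.

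A trivial point: start at $n\ge2$, so that your choice $\varepsilon_n=1/(n(1+|\Ent_\mm(\nu_n)|))$ is always strictly less than $1$ and all weights of $\Omega_n$ are positive.
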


\begin{proof}
Fix the prescribed barycenter $\nu$.  We first prove that its entropy is
finite by approximating it with measures in $\mathrm D(\Ent_\mm)$.  We then
add $\nu$ as an atom, making it the unique barycenter of the enlarged family.

By the density stated in the introduction, choose
$\zeta_j\in\mathrm D(\Ent_\mm)$ converging to $\nu$ in $W_2$.  Choose also
$\varepsilon_j\downarrow0$ so that
$\varepsilon_j|\Ent_\mm(\zeta_j)|\to0$.  Let $\nu_j$ be a barycenter of
$(1-\varepsilon_j)\Omega+\varepsilon_j\delta_{\zeta_j}$ for which the
almost BCD inequality holds.  Set $G(\eta)=\sum_i\lambda_iW_2^2(\eta,\mu_i)$.
Comparison with $\nu$ gives
\[
 (1-\varepsilon_j)G(\nu_j)+\varepsilon_jW_2^2(\nu_j,\zeta_j)
 \le(1-\varepsilon_j)G(\nu)+\varepsilon_jW_2^2(\nu,\zeta_j).
\]
Since $G(\nu_j)\ge G(\nu)$, we obtain
$W_2(\nu_j,\zeta_j)\le W_2(\nu,\zeta_j)$, and hence $\nu_j\to\nu$.  The
variances are bounded and $\varepsilon_j\Ent_\mm(\zeta_j)=o(1)$.  Lower
semicontinuity from Lemma~\ref{lem:entropy-regularity} gives
$\Ent_\mm(\nu)<+\infty$.  The same lemma gives
$\Ent_\mm(\nu)>-\infty$, so $\nu\in\mathrm D(\Ent_\mm)$.

For $0<\varepsilon<1$ put
$\Omega^\varepsilon=(1-\varepsilon)\Omega+\varepsilon\delta_\nu$.
Since $\nu$ minimizes the original barycenter functional,
\[
(1-\varepsilon)G(\eta)+\varepsilon W_2^2(\eta,\nu)
\ge (1-\varepsilon)G(\nu)+\varepsilon W_2^2(\eta,\nu),
\]
so $\nu$ is the unique barycenter of $\Omega^\varepsilon$.  Applying
$\BCD_\delta(K,\infty)$ to this family and rearranging gives
\begin{align*}
\Ent_\mm(\nu)
&\le (1-\varepsilon)\sum_i\lambda_i\Ent_\mm(\mu_i)
+\varepsilon\Ent_\mm(\nu)
-\frac K2(1-\varepsilon)\Var(\Omega)+\delta,\\
\Ent_\mm(\nu)
&\le \sum_i\lambda_i\Ent_\mm(\mu_i)-\frac K2\Var(\Omega)
+\frac{\delta}{1-\varepsilon}.
\end{align*}
Letting $\varepsilon\downarrow0$ proves \eqref{eq:all-barycenters}.
\end{proof}

\begin{corollary}
\label{cor:strong-almost}
Assume $\BCD_\delta(K,\infty)$.
For every $W_2$-geodesic $(\mu_t)_{t\in[0,1]}$ with
$\mu_0,\mu_1\in\mathrm D(\Ent_\mm)$,
\begin{equation}\label{eq:almost-strong-cd}
 \Ent_\mm(\mu_t)\le(1-t)\Ent_\mm(\mu_0)+t\Ent_\mm(\mu_1)
 -\frac K2t(1-t)W_2^2(\mu_0,\mu_1)+\delta.
\end{equation}
The same inequality holds for every normalized weighted subplan and every
time restriction whose endpoints lie in $\mathrm D(\Ent_\mm)$.
\end{corollary}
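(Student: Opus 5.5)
The first assertion will follow from Proposition~\ref{prop:every-barycenter} applied to a two-point family. Let $(\mu_t)$ be a $W_2$-geodesic with $\mu_0,\mu_1\in\mathrm D(\Ent_\mm)$, and fix $t\in(0,1)$; the cases $t\in\{0,1\}$ are trivial. Put $\Omega_t=(1-t)\delta_{\mu_0}+t\delta_{\mu_1}$ and $D=W_2(\mu_0,\mu_1)$.

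\textbf{Step 1: $\mu_t$ is a barycenter.} By the triangle inequality and convexity of the square,
\[
(1-t)W_2^2(\nu,\mu_0)+tW_2^2(\nu,\mu_1)\ge t(1-t)D^2
\]
for every $\nu$. Equality holds at $\nu=\mu_t$, because $W_2(\mu_t,\mu_0)=tD$ and $W_2(\mu_t,\mu_1)=(1-t)D$. Hence $\mu_t\in\Bary(\Omega_t)$ and $\Var(\Omega_t)=t(1-t)D^2$. Proposition~\ref{prop:every-barycenter} applied to $\nu=\mu_t$ then gives \eqref{eq:almost-strong-cd} directly. Note that the proposition also shows $\mu_t\in\mathrm D(\Ent_\mm)$, so no prior finiteness assumption on $\mu_t$ is needed.

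\textbf{Step 2: time restrictions.} For $0\le s<u\le1$, the restriction $\eta_r:=\mu_{s+r(u-s)}$, $r\in[0,1]$, is again a constant-speed $W_2$-geodesic. Its endpoints are $\mu_s$ and $\mu_u$, and $W_2(\mu_s,\mu_u)=(u-s)D$. If these endpoints lie in $\mathrm D(\Ent_\mm)$, Step~1 applies verbatim to $(\eta_r)$.

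\textbf{Step 3: weighted subplans.} Represent the geodesic by an optimal dynamical plan $\boldsymbol\pi$ \cite[Theorem~3.10]{AmbrosioGigliGuide}. Take a Borel $f\ge0$ on $\operatorname{Geo}(X)$ with $\int f\d\boldsymbol\pi=1$, and set $\boldsymbol\pi'=f\boldsymbol\pi$. Then $\boldsymbol\pi'$ is concentrated on the support of $(e_0,e_1)_\#\boldsymbol\pi$, which is a $c$-cyclically monotone set. A probability coupling concentrated on a $c$-cyclically monotone set with finite cost is optimal (standard, e.g.\ \cite{Villani2009}). Hence $(e_0,e_1)_\#\boldsymbol\pi'$ is optimal, and $\mu'_t=(e_t)_\#\boldsymbol\pi'$ is a $W_2$-geodesic. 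Since $f$ is $\boldsymbol\pi$-integrable, each $\mu'_t$ has finite second moment. Whenever the endpoints of $\mu'$ (or of a time restriction of it) are in $\mathrm D(\Ent_\mm)$, Steps~1--2 apply.

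\textbf{Main obstacle.} The only delicate point is the optimality of the reweighted plan in Step~3. Finite cost is immediate from $\boldsymbol\pi'\le(\sup f)\boldsymbol\pi$ when $f$ is bounded, and follows in general from integrability of $f\,\dist^2(e_0,e_1)$ with respect to $\boldsymbol\pi$. The cyclical-monotonicity criterion then applies. Everything else is a direct application of the two-point identification of barycenters together with Proposition~\ref{prop:every-barycenter}.
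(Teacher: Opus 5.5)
Your proposal is correct and follows essentially the paper's route: you identify $\mu_t$ as a barycenter of $(1-t)\delta_{\mu_0}+t\delta_{\mu_1}$ with variance $t(1-t)W_2^2(\mu_0,\mu_1)$, apply Proposition~\ref{prop:every-barycenter}, and reduce weighted subplans and time restrictions to new $W_2$-geodesics; the only difference is that you justify optimality of the reweighted plan via cyclical monotonicity, where the paper cites \cite[Theorems~2.13 and~3.10]{AmbrosioGigliGuide}. One small slip: for unbounded $f$, integrability of $f$ alone gives neither finite second moments of $\mu'_t$ nor finite cost (the paper takes $f$ bounded), but the hypothesis that the relevant endpoints lie in $\mathrm D(\Ent_\mm)\subset\Pp(X)$ supplies both.
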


\begin{proof}
Every time-$t$ point of a Wasserstein geodesic is a barycenter of the
two-point family $(1-t)\delta_{\mu_0}+t\delta_{\mu_1}$, whose variance is
$t(1-t)W_2^2(\mu_0,\mu_1)$.  Proposition~\ref{prop:every-barycenter} applies.
For the last assertion, let $\boldsymbol\pi$ be an optimal dynamical plan
and let $f\ge0$ be a bounded Borel function on $\operatorname{Geo}(X)$ with
$\int f\d\boldsymbol\pi>0$.  The normalized weighted subplan
$f\boldsymbol\pi/\int f\d\boldsymbol\pi$ is optimal, and its marginals form
a $W_2$-geodesic \cite[Theorems~2.13 and~3.10]{AmbrosioGigliGuide}.
Restricting to a time interval and reparametrizing it affinely to $[0,1]$
gives another $W_2$-geodesic \cite[Sections~2.1--2.2]{RajalaSturm}.
The first assertion applies.
\end{proof}

\subsection{Essential non-branching}

Essential non-branching is understood in the sense of
\cite[Section~2.2]{RajalaSturm}.  Rajala and
Sturm proved that strong $K$-convexity of the entropy implies essential
non-branching \cite[Theorem~1.1]{RajalaSturm}.  Their proof uses the following
entropy identity.  If $\mu^\uparrow$ and $\mu^\downarrow$ are mutually
singular probability measures with finite entropy, then
\[
	\Ent_\mm\!\left(\frac{\mu^\uparrow+\mu^\downarrow}{2}\right)
	=\frac12\Ent_\mm(\mu^\uparrow)
	+\frac12\Ent_\mm(\mu^\downarrow)-\log2.
\]
Retaining the additive error in their entropy estimates leads to the
condition $2\delta<\log2$.

\begin{theorem}[Essential non-branching]
\label{thm:enb}
If $\delta<\frac12\log 2$, then every $\BCD_{\delta}(K,\infty)$ space is essentially
non-branching.
\end{theorem}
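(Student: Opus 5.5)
We follow the Rajala--Sturm argument \cite[Theorem~1.1]{RajalaSturm} and keep track of the additive error. By Corollary~\ref{cor:strong-almost}, the $\delta$-almost $K$-convexity \eqref{eq:almost-strong-cd} holds along \emph{every} $W_2$-geodesic with endpoints in $\mathrm D(\Ent_\mm)$. It also holds for every normalized weighted subplan and every time restriction. This is exactly the strong input that Rajala--Sturm require. Lemma~\ref{lem:entropy-regularity} supplies the lower semicontinuity of $\Ent_\mm$ and the bound $\Ent_\mm>-\infty$ that their compactness steps use.

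We argue by contradiction. Suppose there is an optimal dynamical plan $\boldsymbol\pi$ between absolutely continuous measures that is not concentrated on a non-branching set. Following their reduction, we localize by restricting and normalizing $\boldsymbol\pi$, using Corollary~\ref{cor:strong-almost}. This produces two normalized subplans $\boldsymbol\pi^\uparrow,\boldsymbol\pi^\downarrow$ with the following properties. They agree up to some time $t_0$, in the sense that $(e_{t})_\#\boldsymbol\pi^\uparrow=(e_t)_\#\boldsymbol\pi^\downarrow$ for $t\le t_0$ after suitable gluing. Their time-$t$ marginals are mutually singular for $t$ slightly larger than $t_0$. All relevant marginals have finite entropy and comparable densities. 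The mixture $\frac12(\boldsymbol\pi^\uparrow+\boldsymbol\pi^\downarrow)$ is again an optimal plan, because its endpoint coupling is supported in a $c$-cyclically monotone set. Hence \eqref{eq:almost-strong-cd} applies to its marginals on short time intervals $[t_0-\eta,t_0+\eta]$.

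The key estimate compares entropies. At the midpoint-type time, the mixed marginal $\mu_s=\frac12(\mu_s^\uparrow+\mu_s^\downarrow)$ has mutually singular components. By the identity recalled before the theorem,
\[
\Ent_\mm(\mu_s)=\tfrac12\Ent_\mm(\mu^\uparrow_s)+\tfrac12\Ent_\mm(\mu^\downarrow_s)-\log2 .
\]
At the initial time the two branches coincide, so no $\log2$ gain occurs there. Apply the almost convexity to each branch separately, and to the mixture on the interval where branching happens. We first choose the interval so short that the $K$ term is as small as we like; this uses the bounded diameter of the localized supports. We then choose the times so that the entropies of $\mu_t^\uparrow$ and $\mu_t^\downarrow$ are nearly continuous in $t$. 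This continuity comes from lower semicontinuity combined with the upper bound given by almost convexity. Chaining the inequalities then yields a contradiction of the form $\log2\le 2\delta+o(1)$. The factor $2$ appears because the Rajala--Sturm step uses the convexity inequality twice, once for the branches and once for the mixture, so two $\delta$ errors accumulate. Since $2\delta<\log2$, this is impossible.

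The main obstacle is this last bookkeeping. Rajala--Sturm prove non-branching through an iterative midpoint-selection and density argument, in which errors could a priori accumulate over many steps. We must check that our argument invokes the almost inequality only a bounded number of times, namely twice. The error must not be multiplied by the number of iterations, and rescaling a time restriction to $[0,1]$ must not amplify $\delta$. It does not, because $\delta$ is additive and scale-invariant. Our first step is to isolate the exact inequality in their Step~7 where the $\log2$ gap is used, and to verify that only two applications of convexity precede it.
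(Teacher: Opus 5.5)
There is a genuine gap. Your overall mechanism is right: the $\log2$ gain from mixing mutually singular branches is played against additive $\delta$-errors. But the chain you describe does not reach $\log2\le2\delta+o(1)$.

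\textbf{The near-continuity step fails for $\delta>0$.} Write $A:=\Ent_\mm(\mu_{t_0})$. Lower semicontinuity gives $\liminf_{\eta\downarrow0}\Ent_\mm(\mu^\uparrow_{t_0+\eta})\ge A$. Almost convexity on $[t_0,1]$, however, only gives $\limsup\le A+\delta$. A jump of size $\delta$ exactly at the branching time is consistent with both: the function equal to $0$ on $[0,t_0]$ and to $\delta$ on $(t_0,1]$ is lower semicontinuous and $\delta$-almost convex. Since $t_0$ is the branching time, you cannot choose it to be a continuity point. With only these bounds, the mixture inequality at the midpoint of $[t_0-\eta,t_0+\eta]$ gives
\[
A\le\tfrac12(A+\delta)+\tfrac12\bigl(A+\delta-\log2\bigr)+\delta+o(1),
\]
that is, only $\log2\le4\delta$. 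So your scheme covers only $\delta<\frac14\log2$.

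The missing idea is to put the branching time at the end of the mixture's interval, so that the singular endpoint carries weight close to $1$. The paper applies the almost inequality to the mixture on $[0,t+\varepsilon]$ at time $t$, and to each branch on $[t,1]$ at time $t+\varepsilon$. It then multiplies the three inequalities by $(t+\varepsilon)(1-t-\varepsilon)$, $\frac12t(1-t-\varepsilon)$ and $\frac12t(1-t-\varepsilon)+\varepsilon$. The time-$t$ entropy and one branch entropy cancel, leaving
\[
\varepsilon(D_\varepsilon-L_\varepsilon)\le-t(1-t-\varepsilon)(\log2-2\delta)+O(\varepsilon).
\]
The three $\delta$'s have total weight $2t(1-t-\varepsilon)+O(\varepsilon)$ against $t(1-t-\varepsilon)\log2$. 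That is the actual source of the factor $2$: three applications, with the two branch ones averaged.

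\textbf{The reduction you assume is stronger than what Rajala--Sturm's Steps~1--6 give.} Branching times are spread out in general. So one does not get two fixed subplans that agree up to $t_0$ and are mutually singular at all times slightly after $t_0$. Instead, for each small $\varepsilon$ one gets a time $t_\varepsilon\in[T,S]$ and two plans of mass $w_\varepsilon$, possibly with $w_\varepsilon\to0$. These agree up to $t_\varepsilon$ and are mutually singular at $t_\varepsilon+\varepsilon$. There is then no fixed curve along which to invoke continuity.

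The contradiction has to come from bounds uniform in $\varepsilon$:
\begin{itemize}
\item the normalized endpoint entropies are at most $L_\varepsilon=\log(C_1/w_\varepsilon)$, by their (3.9);
\item the entropy-selection estimate, their (3.8), gives a branch with $D_\varepsilon\ge\log(\varepsilon/(C_2w_\varepsilon))$.
\end{itemize}
The $-\log w_\varepsilon$ terms cancel, so $\varepsilon(D_\varepsilon-L_\varepsilon)\ge\varepsilon\log\varepsilon-\varepsilon\log(C_1C_2)\to0$. Meanwhile the right-hand side above tends to a strictly negative number when $2\delta<\log2$.

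The obstacle you single out, accumulation of $\delta$ over an iteration, is not where the difficulty lies. The paper uses Steps~1--6 only as a selection device, with finite entropy supplied by Corollary~\ref{cor:strong-almost}. The error $\delta$ enters only through the three inequalities of Step~7.
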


\begin{proof}
We follow the contradiction argument in
\cite[Proof of Theorem~1.1]{RajalaSturm}. Suppose that an optimal dynamical
plan between two absolutely continuous probability measures is not
concentrated on a non-branching set.

\textbf{Step 1. Selection of two branches.}
Restrict the plan to bounded endpoint sets with bounded densities, retaining
branching.  Corollary~\ref{cor:strong-almost} gives finite entropy at every
time.  The construction in \cite[Steps~1--6]{RajalaSturm} then gives fixed
$0<T<S<1$ and $\ell,C_1,C_2>0$ with the following properties.
For every sufficiently small $\varepsilon>0$, there are
$t=t_\varepsilon\in[T,S]$ and two plans of the same mass $w_\varepsilon>0$.
The normalized plans and their equal mixture are optimal, and all selected
geodesics have length at most $\ell$.  The two plans agree up to time $t$
and have mutually singular marginals at $t+\varepsilon$.

For an unnormalized marginal density $\rho$ of mass $w_\varepsilon$,
\[
 \Ent_\mm\!\left(\frac{\rho\mm}{w_\varepsilon}\right)
 =\frac1{w_\varepsilon}\int_X\rho\log\rho\d\mm-\log w_\varepsilon.
\]
Thus \cite[(3.9)]{RajalaSturm} bounds the normalized endpoint entropies at
times $0$ and $1$ by $L_\varepsilon:=\log(C_1/w_\varepsilon)$.
The entropy-selection estimate \cite[(3.8)]{RajalaSturm} gives a branch,
labelled $\downarrow$, whose normalized entropy at $t+\varepsilon$ satisfies
$D_\varepsilon\ge\log(\varepsilon/(C_2w_\varepsilon))$.
The terms $-\log w_\varepsilon$ cancel, giving
\begin{equation}
 \label{eq:branch-entropy-lower}
 D_\varepsilon-L_\varepsilon
 \ge\log\varepsilon-\log(C_1C_2).
\end{equation}
Apply Corollary~\ref{cor:strong-almost} on $[0,1]$ to each normalized branch.
Their intermediate entropies are finite, so the corollary also applies to
the time restrictions used below.

\textbf{Step 2. Comparison of entropies.}
Let $A_\varepsilon$ be the entropy of the common normalized marginal at time
$t$, and let $B_\varepsilon,D_\varepsilon$ be the entropies of the two
normalized marginals at time $t+\varepsilon$.  Since the latter measures are
mutually singular, their equal mixture has entropy
$(B_\varepsilon+D_\varepsilon)/2-\log2$.

Apply Corollary~\ref{cor:strong-almost} first to the equal mixture on
$[0,t+\varepsilon]$, evaluated at $t$.  Apply it twice more, once to each
branch on $[t,1]$, evaluated at $t+\varepsilon$.  These three applications
give
\begin{align*}
 A_\varepsilon
 &\le \frac{\varepsilon}{t+\varepsilon}L_\varepsilon
 +\frac{t}{t+\varepsilon}
 \left(\frac{B_\varepsilon+D_\varepsilon}{2}-\log2\right)
 +\delta+r_0,\\
 B_\varepsilon
 &\le \frac{\varepsilon}{1-t}L_\varepsilon
 +\frac{1-t-\varepsilon}{1-t}A_\varepsilon
 +\delta+r_\uparrow,\\
 D_\varepsilon
 &\le \frac{\varepsilon}{1-t}L_\varepsilon
 +\frac{1-t-\varepsilon}{1-t}A_\varepsilon
 +\delta+r_\downarrow.
\end{align*}
Here $r_0,r_\uparrow,r_\downarrow$ are the curvature terms from
\eqref{eq:almost-strong-cd}.  For $\delta=0$, these are the inequalities in
\cite[Step~7]{RajalaSturm}.

For the marginals $(\mu_s)$ of either normalized branch or their equal
mixture, the geodesic length bound gives
$W_2(\mu_a,\mu_b)\le(b-a)\ell$ for $0\le a<b\le1$.
Keeping the time factors in \eqref{eq:almost-strong-cd}, we obtain
\[
 |r_0|\le\frac{|K|}{2}t\varepsilon\ell^2,
 \qquad
 |r_\uparrow|,|r_\downarrow|
 \le\frac{|K|}{2}\varepsilon(1-t-\varepsilon)\ell^2.
\]
Multiply the three entropy inequalities, respectively, by the nonnegative factors
\[
 (t+\varepsilon)(1-t-\varepsilon),\qquad
 \frac{t(1-t-\varepsilon)}2,\qquad
 \frac{t(1-t-\varepsilon)}2+\varepsilon,
\]
and add them.  The $A_\varepsilon$ and $B_\varepsilon$ terms cancel,
leaving $\varepsilon(D_\varepsilon-L_\varepsilon)$ on the left.

The curvature bounds above give
\begin{equation}\label{eq:branching-main-estimate}
\begin{aligned}
 \varepsilon(D_\varepsilon-L_\varepsilon)
 &\le-t(1-t-\varepsilon)(\log2-2\delta)
 \\
 &\quad+\varepsilon(2-t-\varepsilon)\delta
 +\frac{|K|}{2}\varepsilon(t+\varepsilon)(1-t-\varepsilon)\ell^2.
\end{aligned}
\end{equation}

\textbf{Step 3. The contradiction.}
For $\varepsilon<(1-S)/2$, we have
$t(1-t-\varepsilon)\ge T(1-S)/2$ and
$(t+\varepsilon)(1-t-\varepsilon)\le1/4$.
Since $\log2-2\delta>0$, combining \eqref{eq:branch-entropy-lower}
and \eqref{eq:branching-main-estimate} gives
\[
 \varepsilon\bigl(\log\varepsilon-\log(C_1C_2)\bigr)
 \le-\frac{T(1-S)}2(\log2-2\delta)
 +\varepsilon\left(2\delta+\frac{|K|\ell^2}{8}\right).
\]
As $\varepsilon\downarrow0$, the left-hand side tends to zero and the
right-hand side tends to a strictly negative number, a contradiction.
\end{proof}

\subsection{Finite-dimensional BCD and essential non-branching}

\begin{lemma}[Passage to infinite dimension]\label{lem:finite-to-infinite}
For $1<N<\infty$, $\BCD(K,N)$ implies $\BCD_0(K,\infty)$.
\end{lemma}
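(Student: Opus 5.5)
The plan is to keep the barycenter $\bar\mu$ supplied by $\BCD(K,N)$ and show that it also satisfies \eqref{eq:intro-def} with $\delta=0$. Set $\kappa=K/N$, $E_i=\Ent_\mm(\mu_i)\in\R$ and $r_i=W_2(\bar\mu,\mu_i)$. Since $\bar\mu\in\Bary(\Omega)$, we have $\Var(\Omega)=\sum_i\lambda_ir_i^2$. Write
\[
a_i=\frac{r_i}{s_\kappa(r_i)}>0,\qquad b_i=\frac{r_i}{t_\kappa(r_i)}.
\]

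\textbf{Positivity and finiteness.} The left-hand side of \eqref{eq:finite-bcd} is strictly positive, because each $a_i>0$ and each $U_N(\mu_i)>0$. Hence $\sum_i\lambda_ib_i>0$ and $U_N(\bar\mu)>0$. The latter says $\Ent_\mm(\bar\mu)<+\infty$, and Lemma~\ref{lem:entropy-regularity} gives $\Ent_\mm(\bar\mu)>-\infty$. So we may take logarithms in \eqref{eq:finite-bcd}.

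\textbf{Reduction by concavity of $\log$.} Concavity of $\log$ with the weights $\lambda_i$ gives
\[
\log\sum_i\lambda_ia_ie^{-E_i/N}\ge\sum_i\lambda_i\log a_i-\frac1N\sum_i\lambda_iE_i .
\]
Combined with \eqref{eq:finite-bcd}, this yields
\[
\Ent_\mm(\bar\mu)\le\sum_i\lambda_iE_i+N\Big(\log\sum_i\lambda_ib_i-\sum_i\lambda_i\log a_i\Big).
\]
Next use $\log y\le y-1$ for $y>0$. It therefore suffices to prove the scalar inequality
\begin{equation*}
N\big(b(r)-1-\log a(r)\big)\le-\frac K2r^2
\end{equation*}
for every admissible $r$, where $a(r)=r/s_\kappa(r)$ and $b(r)=r/t_\kappa(r)$. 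Summing this against $\lambda_i$ then gives exactly $-\frac K2\Var(\Omega)$. Dividing by $N$ and setting
\[
f_\kappa(r):=\frac{r\,c_\kappa(r)}{s_\kappa(r)}-1+\log\frac{s_\kappa(r)}{r}+\frac\kappa2r^2,
\]
the claim becomes $f_\kappa\le0$ on $[0,\pi/\sqrt\kappa)$ when $\kappa>0$, and on $[0,\infty)$ when $\kappa\le0$. For $\kappa=0$ this is trivial, since $f_0\equiv0$. By the scaling $r\mapsto\sqrt{|\kappa|}\,r$ it suffices to treat $\kappa=\pm1$.

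\textbf{The one-variable inequality (main obstacle).} Taylor expansion gives $rc/s=1-\kappa r^2/3+O(r^4)$ and $\log(s/r)=-\kappa r^2/6+O(r^4)$. So $f_\kappa(0)=0$ and $f_\kappa$ vanishes to second order at $0$. I would therefore differentiate. Using $s'=c$ and $c'=-\kappa s$, I expect
\[
f_\kappa'(r)=\frac{2c}{s}-\frac{r}{s^2}-\frac1r,
\]
up to the term $\kappa r$, which should cancel against $-\kappa r\cdot$ from $(rc/s)'$, giving a symmetric expression. Multiplying by $rs^2>0$, the sign of $f_\kappa'$ is that of
\[
g(r)=2rsc-r^2-s^2 .
\]
For $\kappa=1$, $g(r)=r\sin 2r-r^2-\sin^2r$. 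For $\kappa=-1$, $g(r)=r\sinh2r-r^2-\sinh^2r$. I would check $g\le0$ by an elementary argument, for instance via $g=-(r-sc)^2+\kappa s^4$-type identities, or by comparing power series.

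If the sign works out differently for $\kappa<0$, my fallback is to keep the exact expression $\log\sum_i\lambda_ib_i$ instead of the linearized bound. One can then use monotonicity of $b$ in $r$ together with Jensen's inequality for $r\mapsto\log b$ as a function of $r^2$. Finally, the endpoint $r_i=\pi/\sqrt\kappa$ for $\kappa>0$ is harmless: there $a_i=+\infty$ and the inequality is vacuous.
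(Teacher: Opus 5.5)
Your reduction is correct and is essentially the paper's. You apply Jensen for $\log$ to the $a_i$-sum and then linearize $\log\sum_i\lambda_ib_i$; your $\log y\le y-1$ plays the role of the paper's $\log(1-x)\le-x$. What remains is the scalar inequality $f_\kappa\le0$, which is exactly the sum of the paper's two bounds $B(q)\le1-q/3$ and $\log A(q)\ge q/6$.

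The gap is that you do not prove this step, which you yourself call the main obstacle, and the computation you sketch is wrong. Since $c_\kappa^2+\kappa s_\kappa^2=1$, one has $(rc/s)'=c/s-r/s^2$ exactly. So no $-\kappa r$ is left over to cancel the $+\kappa r$ coming from $\frac\kappa2r^2$, and the correct derivative is
\[
f_\kappa'(r)=\frac{2c}{s}-\frac{r}{s^2}-\frac1r+\kappa r .
\]
Hence your $g=2rsc-r^2-s^2$ does not determine the sign of $f_\kappa'$. Moreover, $g=-(r-sc)^2-\kappa s^4$, not $+\kappa s^4$. For $\kappa=-1$ this is positive: $r\sinh2r-r^2-\sinh^2r=r^4+O(r^6)$. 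So the check ``$g\le0$'' fails for negative curvature. For $\kappa=1$ it holds, but for the wrong function, so that case is not established either. The fallback via Jensen for $\log b$ as a function of $r^2$ is not an argument as stated.

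The repair is one line. Multiply the correct derivative by $rs_\kappa^2>0$ and use $1-\kappa s_\kappa^2=c_\kappa^2$:
\[
r s_\kappa^2 f_\kappa'(r)=2rs_\kappa c_\kappa-r^2c_\kappa^2-s_\kappa^2=-(rc_\kappa-s_\kappa)^2\le0 .
\]
Thus $f_\kappa$ is nonincreasing with $f_\kappa(0^+)=0$, so $f_\kappa\le0$ on the whole admissible range. This holds for every sign of $\kappa$ and needs no rescaling. With this identity your argument is complete, and it is somewhat more economical than the paper's, which proves the two one-sided estimates separately.

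Two smaller points.
\begin{enumerate}
\item You may not invoke Lemma~\ref{lem:entropy-regularity} to get $\Ent_\mm(\bar\mu)>-\infty$. That lemma assumes $\BCD_\delta(K,\infty)$, which is what you are proving. Instead, observe as the paper does that the target inequality is trivial when $\Ent_\mm(\bar\mu)=-\infty$.
\item At $r_i=\pi/\sqrt\kappa$ the inequality is not ``vacuous''. Its left side is $+\infty$ while $\sum_i\lambda_ib_i=-\infty$, so it cannot hold. Hence a barycenter supplied by $\BCD(K,N)$ has no such $r_i$.
\end{enumerate}
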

\begin{proof}
The analogous implication for $(K,N)$-convex functions is proved in
\cite[Lemma~2.12]{EKS}.

Fix $\Omega=\sum_i\lambda_i\delta_{\mu_i}$ with
$\mu_i\in\mathrm D(\Ent_\mm)$, and
let $\bar\mu$ be a barycenter given by $\BCD(K,N)$.  Set
$E_i=\Ent_\mm(\mu_i)$, $\bar E=\Ent_\mm(\bar\mu)$,
$r_i=W_2(\bar\mu,\mu_i)$, and $q_i=Kr_i^2/N$.  

For $q<\pi^2$, define
\[
 A(q)=
 \begin{cases}
  \sqrt q/\sin\sqrt q,&q>0,\\
  1,&q=0,\\
  \sqrt{-q}/\sinh\sqrt{-q},&q<0,
 \end{cases}
 \qquad
 B(q)=
 \begin{cases}
  \sqrt q\cot\sqrt q,&q>0,\\
  1,&q=0,\\
  \sqrt{-q}\coth\sqrt{-q},&q<0.
 \end{cases}
\]
Thus $A(q_i)=r_i/s_{K/N}(r_i)$ and
$B(q_i)=r_i/t_{K/N}(r_i)$.

The proof uses the estimates $B(q)\le 1-q/3$ and
$\log A(q)\ge q/6$.  Both bounds agree with the corresponding functions
to first order at $q=0$.  The sum $1/3+1/6=1/2$ gives the coefficient
$K/2$ in the entropy inequality.  For the first estimate, let $0<x<\pi$.
The function $g_+(x)=(1-x^2/3)\sin x-x\cos x$ is nonnegative because
$g_+(0)=0$ and $g_+'(x)=x(\sin x-x\cos x)/3\ge0$; here
$(\sin x-x\cos x)'=x\sin x\ge0$.  Hence
$x\cot x\le1-x^2/3$.  For $x>0$, the same argument applied to
$g_-(x)=(1+x^2/3)\sinh x-x\cosh x$ gives
$x\coth x\le1+x^2/3$.  This proves the estimate for $B$.  Moreover,
\[
 \frac{\mathrm d}{\mathrm d x}\log\frac{x}{\sin x}
 =\frac{1-x\cot x}{x}\ge\frac{x}{3},
 \qquad
 \frac{\mathrm d}{\mathrm d x}\log\frac{x}{\sinh x}
 =\frac{1-x\coth x}{x}\ge-\frac{x}{3}.
\]
Integration from $0$ to $x$ proves the estimate for $\log A$.

If $\bar E=-\infty$, there is nothing to prove.  Otherwise,
\eqref{eq:finite-bcd} and the positivity of its left-hand side imply that
$\bar E\in\R$ and $\sum_i\lambda_iB(q_i)>0$.  The BCD inequality becomes
\[
 \sum_i\lambda_i A(q_i)e^{-E_i/N}
 \le e^{-\bar E/N}\sum_i\lambda_iB(q_i).
\]
Taking logarithms gives
\begin{equation}\label{eq:finite-to-infinite-log}
 \bar E\le
 -N\log\left(\sum_i\lambda_iA(q_i)e^{-E_i/N}\right)
 +N\log\left(\sum_i\lambda_iB(q_i)\right).
\end{equation}
Hence, by Jensen's inequality for the concave function $\log$,
\[
 \begin{aligned}
  \log\left(\sum_i\lambda_i A(q_i)e^{-E_i/N}\right)
  &\ge \sum_i\lambda_i
  \left(\log A(q_i)-\frac{E_i}{N}\right)\\
  &\ge \frac16\sum_i\lambda_iq_i
  -\frac1N\sum_i\lambda_iE_i.
 \end{aligned}
\]

Since $\sum_i\lambda_iB(q_i)>0$, the estimate for $B$ gives
\[
 \log\left(\sum_i\lambda_iB(q_i)\right)
 \le \log\left(1-\frac13\sum_i\lambda_iq_i\right)
 \le-\frac13\sum_i\lambda_iq_i.
\]

Substitution in \eqref{eq:finite-to-infinite-log} yields
\[
 \bar E\le \sum_i\lambda_iE_i
 -\frac{N}{2}\sum_i\lambda_iq_i
 =\sum_i\lambda_iE_i-\frac K2\sum_i\lambda_i r_i^2.
\]
Since $\sum_i\lambda_i r_i^2=\Var(\Omega)$, this is the defining
inequality of $\BCD_0(K,\infty)$.

\end{proof}

\begin{corollary}\label{cor:finite-enb}
Every $\BCD(K,N)$ space with $1<N<\infty$
is essentially non-branching.
\end{corollary}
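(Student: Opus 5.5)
The plan is to combine the two results immediately preceding the statement, with no new estimates needed. By Lemma~\ref{lem:finite-to-infinite}, a space satisfying $\BCD(K,N)$ with $1<N<\infty$ also satisfies $\BCD_0(K,\infty)$. This is the $\delta=0$ instance of Definition~\ref{def:almost-bcd}.

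Since $0<\frac12\log2$, Theorem~\ref{thm:enb} applies with $\delta=0$, and it gives essential non-branching directly. It is worth checking the standing hypotheses of Theorem~\ref{thm:enb}: a complete separable geodesic space with a locally finite, fully supported Radon measure. These are exactly the assumptions under which $\BCD(K,N)$ is considered, so nothing further is required.

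I will also check that no extra positivity condition is hidden in the passage to infinite dimension. Lemma~\ref{lem:finite-to-infinite} handles the case $\bar E=-\infty$ trivially. In the case $\bar E\in\R$ it derives the positivity $\sum_i\lambda_iB(q_i)>0$ from \eqref{eq:finite-bcd} itself. Hence the resulting $\BCD_0(K,\infty)$ holds for every admissible $\Omega$, and in particular for the two-point families used through Corollary~\ref{cor:strong-almost} in the Rajala--Sturm argument.

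I expect no real obstacle here. The only point needing care is that Theorem~\ref{thm:enb} is stated for $\BCD_\delta(K,\infty)$ and requires the strict inequality $\delta<\frac12\log2$. That inequality holds trivially at $\delta=0$, and the $\delta=0$ case reduces precisely to the original Rajala--Sturm strong $K$-convexity setting.
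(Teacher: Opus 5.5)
Your proposal is correct and matches the paper's proof: Lemma~\ref{lem:finite-to-infinite} upgrades $\BCD(K,N)$ to $\BCD_0(K,\infty)$, and Theorem~\ref{thm:enb} with $\delta=0<\frac12\log2$ then gives essential non-branching. Your additional checks on the standing hypotheses and on the positivity of $\sum_i\lambda_iB(q_i)$ are consistent with how the paper handles them.
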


\begin{proof}
Lemma~\ref{lem:finite-to-infinite} gives
$\BCD_0(K,\infty)$.  Apply Theorem~\ref{thm:enb} with $\delta=0$.
\end{proof}

\subsection{Stability of the $\BCD_\delta$ condition}
The $\BCD_\delta$ condition is stable under compact and pointed measured
Gromov--Hausdorff convergence.  We first state the entropy convergence lemma used in
both cases.

\begin{lemma}[Local entropy convergence]\label{lem:local-entropy-convergence}
Let $Z$ be a proper metric space, and write $W_2^Z$ for its quadratic
Wasserstein distance.  Let $\mathfrak n_j,\mathfrak n$ be locally finite Radon
measures on $Z$ with $\mathfrak n_j\rightharpoonup\mathfrak n$ locally weakly.
Suppose that $\mu_j\rightharpoonup\mu$
and that all these probability measures are supported in a fixed compact
set.  Then
\begin{equation}\label{eq:local-entropy-liminf}
 \Ent_{\mathfrak n}(\mu)
 \le\liminf_{j\to\infty}\Ent_{\mathfrak n_j}(\mu_j).
\end{equation}
Conversely, every compactly supported
$\mu\in\mathrm D(\Ent_{\mathfrak n})$ admits probability measures $\mu_j$,
supported in a fixed compact set, such that
\begin{equation}\label{eq:local-entropy-recovery}
 W_2^Z(\mu_j,\mu)\to0,
 \qquad
 \Ent_{\mathfrak n_j}(\mu_j)\to\Ent_{\mathfrak n}(\mu).
\end{equation}
\end{lemma}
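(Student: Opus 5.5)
For the liminf inequality \eqref{eq:local-entropy-liminf}, I will use the Donsker--Varadhan variational formula on the fixed compact set. Choose a compact $Q\subset Z$ containing all supports. Enlarge it to a compact neighbourhood $Q'$ with $\mathfrak n(\partial Q')=0$; this is possible because only countably many radii $r$ give spheres of positive measure. Then $\mathfrak n_j|_{Q'}\rightharpoonup\mathfrak n|_{Q'}$ weakly as finite measures, with masses $M_j\to M$. For a probability $\mu$ supported in $Q'$ one has
\[
 \Ent_{\mathfrak n}(\mu)=\sup_{\varphi\in C_b(Q')}\Big\{\int\varphi\d\mu-\int_{Q'}(e^{\varphi}-1)\d\mathfrak n\Big\}.
\]
This follows from the formula for the probability $\mathfrak n|_{Q'}/M$ after adding $\log M$ and optimizing over constants; alternatively, verify it directly by the Fenchel inequality $\rho\varphi\le\rho\log\rho-\rho+e^\varphi$, with equality approximated by $\varphi=\log\rho$ truncated. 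Each functional in the supremum is jointly continuous under $\mu_j\rightharpoonup\mu$ and $\mathfrak n_j|_{Q'}\rightharpoonup\mathfrak n|_{Q'}$. Therefore the supremum is jointly lower semicontinuous, which gives \eqref{eq:local-entropy-liminf}. On a compact set, weak convergence and $W_2$ convergence coincide, so no moment issue arises.

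For the recovery sequence \eqref{eq:local-entropy-recovery}, I will first reduce to nice densities. Given $\mu=\rho\mathfrak n$ with support in a compact set, truncate $\rho$ to $\min\{\rho,m\}$ and renormalize. Then approximate the result in $L^1(\mathfrak n)$ by a continuous function $\rho'$. It should be bounded between $0$ and $m$, supported in a compact $Q'$ with $\mathfrak n(\partial Q')=0$, and taken as $\rho'=\max\{g,0\}$ for some $g\in C_c$. On a bounded range, $x\log x$ is uniformly continuous, so entropies converge along these approximations, and the $W_2$ distances go to zero since everything lives in $Q'$. The truncation step uses monotone and dominated convergence for the positive and negative parts of $\rho\log\rho$. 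The negative part is harmless because $\rho\log\rho\ge -e^{-1}$ and $\mathfrak n(Q')<\infty$. A diagonal argument then reduces the claim to $\mu=\rho\mathfrak n$ with $\rho$ continuous, compactly supported and bounded.

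For such $\rho$, set $\mu_j:=\rho\mathfrak n_j/\int\rho\d\mathfrak n_j$. This is well defined for large $j$ because $\int\rho\d\mathfrak n_j\to\int\rho\d\mathfrak n=1$. Then $\mu_j\rightharpoonup\mu$ with supports in $\supp\rho$, hence $W_2^Z(\mu_j,\mu)\to0$. Writing $Z_j:=\int\rho\d\mathfrak n_j$, we have
\[
 \Ent_{\mathfrak n_j}(\mu_j)=Z_j^{-1}\int\rho\log\rho\d\mathfrak n_j-\log Z_j .
\]
The function $\rho\log\rho$ is continuous and compactly supported, so $\int\rho\log\rho\d\mathfrak n_j\to\int\rho\log\rho\d\mathfrak n$. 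This gives the convergence of entropies.

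The main obstacle is the approximation step in the recovery part, namely ensuring that entropy converges under the density truncation and smoothing. The liminf part is standard once the boundary-null compact neighbourhood $Q'$ is chosen; that choice is needed for weak convergence of the restricted measures.
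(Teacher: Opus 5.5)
Your proposal is correct and follows essentially the same route as the paper. For the liminf you use the dual variational formula on a compact neighbourhood with $\mathfrak n$-null boundary. For recovery you take $\rho\mathfrak n_j/\int\rho\d\mathfrak n_j$ for continuous compactly supported densities, then truncate, approximate in $L^1(\mathfrak n)$, and diagonalize.

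There is one small slip in the dual formula, and it does not affect the conclusion. As written, $\sup_{\varphi}\{\int\varphi\d\mu-\int_{Q'}(e^{\varphi}-1)\d\mathfrak n\}$ equals $\Ent_{\mathfrak n}(\mu)+\mathfrak n(Q')-1$, not $\Ent_{\mathfrak n}(\mu)$. Your own Fenchel inequality integrates to the functional $\int\varphi\d\mu-\int_{Q'}e^{\varphi}\d\mathfrak n+1$, which is equivalent to the paper's $-\int e^{\varphi-1}\d\mathfrak n$. Since $\mathfrak n_j(Q')\to\mathfrak n(Q')$, the offset converges and the liminf inequality still follows.
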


\begin{proof}
Choose a compact ball $B$ that contains all the supports in its interior and
satisfies $\mathfrak n(\partial B)=0$.  The restricted finite measures
$\mathfrak n_j|_B$ converge weakly to $\mathfrak n|_B$.  The variational
formula
\[
 \Ent_{\mathfrak n|_B}(\mu)
 =\sup_{\varphi\in C(B)}
 \left\{\int_B\varphi\d\mu
 -\int_B e^{\varphi-1}\d\mathfrak n\right\}
\]
gives \eqref{eq:local-entropy-liminf}.  This is the joint lower
semicontinuity statement of \cite[Proposition~4.7]{GigliMondinoSavare},
restricted to compactly supported probability measures.

For the recovery statement, first let $\mu=f\mathfrak n$, where
$f\in C_c(\operatorname{int}B)$ is nonnegative, bounded, and normalized by
$\int f\d\mathfrak n=1$.  Set
$Z_j:=\int f\d\mathfrak n_j$ and
$\mu_j:=Z_j^{-1}f\mathfrak n_j$.  Then $Z_j\to1$,
$\mu_j\rightharpoonup\mu$, and
\[
 \Ent_{\mathfrak n_j}(\mu_j)
 =\frac1{Z_j}\int f\log f\d\mathfrak n_j-\log Z_j
 \longrightarrow\Ent_{\mathfrak n}(\mu).
\]

For a general compactly supported $\mu=f\mathfrak n\in D(\Ent_{\mathfrak n})$,
first truncate $f$ from above and normalize. Since $r\mapsto r\log r$ is
bounded from below, the truncated densities converge to $f$ in $L^1(\mathfrak n)$
and their entropies converge to $\Ent_{\mathfrak n}(\mu)$.

Each bounded density can then be approximated in $L^1(\mathfrak n)$ by
nonnegative functions in $C_c(\operatorname{int} B)$ with a common uniform
bound. After normalization, the entropy also converges, since
$r\mapsto r\log r$ is uniformly continuous on bounded intervals and
$\mathfrak n(B)<\infty$.

Thus we may choose normalized $f^\ell\in C_c(\operatorname{int} B)$ such that
\[
f^\ell\mathfrak n\to\mu \quad\text{in }W_2^Z,
\qquad
\Ent_{\mathfrak n}(f^\ell\mathfrak n)\to\Ent_{\mathfrak n}(\mu).
\]
For each fixed $\ell$, apply the preceding construction with
\[
Z_{j,\ell}:=\int f^\ell\,d\mathfrak n_j,
\qquad
\mu_{j,\ell}:=Z_{j,\ell}^{-1}f^\ell\mathfrak n_j.
\]
Then, as $j\to\infty$,
\[
W_2^Z(\mu_{j,\ell},f^\ell\mathfrak n)\to0,
\qquad
\Ent_{\mathfrak n_j}(\mu_{j,\ell})
\to \Ent_{\mathfrak n}(f^\ell\mathfrak n).
\]
A diagonal choice proves~\eqref{eq:local-entropy-recovery}.
\end{proof}

We shall also use the following support observation.  If
$\supp\mu_i\subset B_R(o)$ for every $i$, then every barycenter of
$\sum_i\lambda_i\delta_{\mu_i}$ is supported in $\overline B_{2R}(o)$.
Indeed, for $\nu\in\Bary(\sum_i\lambda_i\delta_{\mu_i})$, define
\[
P(y)=
\begin{cases}
 y,& \dist(y,o)\le 2R,\\
 o,& \dist(y,o)>2R,
\end{cases}.
\]
The measure $P_\#\nu$ is a competitor for $\nu$.
If $\pi_i\in\Pi(\nu,\mu_i)$ is optimal, $\dist(y,o)>2R$, and
$x\in\supp\mu_i$, then
$\dist(o,x)\le R<\dist(y,o)-R\le\dist(y,x)$.
Thus $(P,\operatorname{id})_\#\pi_i$ has strictly smaller cost on a set
of positive mass whenever $\nu(X\setminus\overline B_{2R}(o))>0$.
Summing over $i$ contradicts the minimality of $\nu$.

\begin{theorem}[Stability under compact measured Gromov--Hausdorff convergence]
Let $(X_j,\dist_j,\mm_j)$ be compact geodesic metric measure spaces whose
reference measures are probability measures with full support.  Suppose that
$(X_j,\dist_j,\mm_j)$ converges to $(X,\dist,\mm)$ in the measured
Gromov--Hausdorff topology.
If $K_j\to K$, $\limsup_j\delta_j\le\delta$, and each $X_j$ satisfies
$\BCD_{\delta_j}(K_j,\infty)$, then $X$ satisfies
$\BCD_\delta(K,\infty)$.
\end{theorem}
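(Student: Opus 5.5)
We embed all spaces isometrically into a common compact metric space $(Z,\dist_Z)$ in the standard way for measured Gromov--Hausdorff convergence. In $Z$ we have $X_j\to X$ in the Hausdorff sense and $\mm_j\rightharpoonup\mm$ weakly. Since everything is compact, $W_2^Z$ metrizes weak convergence of probability measures on $Z$, and all supports lie in one compact set. Lemma~\ref{lem:local-entropy-convergence} therefore applies with $\mathfrak n_j=\mm_j$ and $\mathfrak n=\mm$.

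Fix $\Omega=\sum_i\lambda_i\delta_{\mu_i}$ with $\mu_i\in\mathrm D(\Ent_\mm)$. By the recovery part of Lemma~\ref{lem:local-entropy-convergence}, choose $\mu_i^j$ with $W_2^Z(\mu_i^j,\mu_i)\to0$ and $\Ent_{\mm_j}(\mu_i^j)\to\Ent_\mm(\mu_i)$. Finite entropy forces $\mu_i^j\ll\mm_j$, so $\mu_i^j\in\Pp(X_j)$. Apply $\BCD_{\delta_j}(K_j,\infty)$ in $X_j$ to $\Omega_j=\sum_i\lambda_i\delta_{\mu_i^j}$ and obtain a barycenter $\bar\mu_j$ with
\[
\Ent_{\mm_j}(\bar\mu_j)\le\sum_i\lambda_i\Ent_{\mm_j}(\mu_i^j)-\frac{K_j}2\Var_{X_j}(\Omega_j)+\delta_j .
\]
By compactness of $\cP(Z)$, pass to a subsequence with $\bar\mu_j\to\bar\mu$ in $W_2^Z$. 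Since $\supp\bar\mu_j\subset X_j$ and $X_j\to X$ in the Hausdorff sense, $\bar\mu$ is supported in $X$. The liminf part of Lemma~\ref{lem:local-entropy-convergence} then gives $\Ent_\mm(\bar\mu)\le\liminf_j\Ent_{\mm_j}(\bar\mu_j)$.

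\textbf{Main obstacle.} It remains to show that $\bar\mu\in\Bary(\Omega)$ and $\Var_{X_j}(\Omega_j)\to\Var(\Omega)$, where variances are computed in $\Pp(X_j)$ and $\Pp(X)$ respectively. The $W_2$ distances between measures on $X_j$ agree with $W_2^Z$ because the embedding is isometric. Continuity of $W_2^Z$ gives
\[
\sum_i\lambda_iW_2^2(\bar\mu,\mu_i)=\lim_j\Var_{X_j}(\Omega_j),
\]
so $\Var(\Omega)\le\liminf_j\Var_{X_j}(\Omega_j)$. For the reverse inequality, take any competitor $\nu\in\Pp(X)$; we need $\nu_j\in\Pp(X_j)$ with $W_2^Z(\nu_j,\nu)\to0$. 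Let $p_j:X\to X_j$ be a Borel map with $\dist_Z(x,p_j(x))\le d_H(X,X_j)$, which exists by a measurable selection on a finite partition. Then $\nu_j=(p_j)_\#\nu$ satisfies $W_2^Z(\nu_j,\nu)\le d_H(X,X_j)\to0$. Hence
\[
\limsup_j\Var_{X_j}(\Omega_j)\le\sum_i\lambda_iW_2^2(\nu,\mu_i)
\]
for every $\nu$, so $\limsup_j\Var_{X_j}(\Omega_j)\le\Var(\Omega)$. The two bounds give that $\bar\mu$ attains the infimum and that the variances converge. Note that the barycenters in $X_j$ must not be allowed to leave $X_j$; this is automatic, since they are measures on $X_j$.

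Passing to the limit in the inequality, using $K_j\to K$, the convergence of the variances, the entropy convergence of the $\mu_i^j$, and $\limsup_j\delta_j\le\delta$, we obtain \eqref{eq:intro-def} for $\bar\mu$. Finally, $\bar\mu\in\Bary(\Omega)$ since $\Var(\Omega)$ is attained. If $\Ent_\mm(\bar\mu)=-\infty$ there is nothing extra to check, and Lemma~\ref{lem:entropy-regularity} is not even needed on a compact space with finite $\mm$.
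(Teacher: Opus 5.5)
Your proposal is correct and follows the paper's proof. The steps match: recovery sequences from Lemma~\ref{lem:local-entropy-convergence}, a subsequential limit of the barycenters in the compact space $Z$, approximation of competitors on $X$ by measures on $X_j$ to show the limit is a barycenter and that $\Var(\Omega_j)\to\Var(\Omega)$, and finally the liminf part of the lemma. You spell out details the paper leaves implicit. One harmless nit: a finite-partition selection gives $\dist_Z(x,p_j(x))\le d_H(X,X_j)+\varepsilon_j$ rather than exactly $d_H(X,X_j)$, which still suffices.
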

\begin{proof}
Realize the convergence in a common compact metric space $Z$, so that
$X_j\to X$ in Hausdorff distance and $\mm_j\rightharpoonup\mm$.
Fix $\Omega=\sum_i\lambda_i\delta_{\mu_i}$ with
$\mu_i\in\mathrm D(\Ent_\mm)$.  Lemma~\ref{lem:local-entropy-convergence}
gives $\mu_{j,i}\in\cP(X_j)$ satisfying
\[
 W_2^Z(\mu_{j,i},\mu_i)\to0,
 \qquad
 \Ent_{\mm_j}(\mu_{j,i})\to\Ent_\mm(\mu_i).
\]
Set $\Omega_j:=\sum_i\lambda_i\delta_{\mu_{j,i}}$.  Choose
$\bar\mu_j\in\Bary(\Omega_j)$ such that
\begin{equation}\label{eq:compact-stability-entropy}
 \Ent_{\mm_j}(\bar\mu_j)
 \le\sum_i\lambda_i\Ent_{\mm_j}(\mu_{j,i})
 -\frac{K_j}{2}\Var(\Omega_j)+\delta_j.
\end{equation}
After passing to a subsequence, $W_2^Z(\bar\mu_j,\bar\mu)\to0$.

For every $\eta\in\cP(X)$, Hausdorff convergence gives
$\eta_j\in\cP(X_j)$ with $W_2^Z(\eta_j,\eta)\to0$.
Passing to the limit in the minimality inequality for $\bar\mu_j$ shows that
$\bar\mu\in\Bary(\Omega)$.  These convergences also give
$\Var(\Omega_j)\to\Var(\Omega)$.  Lemma~\ref{lem:local-entropy-convergence}
and \eqref{eq:compact-stability-entropy} now yield
\[
 \Ent_\mm(\bar\mu)
 \le\sum_i\lambda_i\Ent_\mm(\mu_i)
 -\frac K2\Var(\Omega)+\delta.
\]
\end{proof}

\begin{theorem}[Stability under pointed measured Gromov--Hausdorff convergence]\label{thm:pointed-stability}
Let $(X_j,\dist_j,\mm_j,o_j)$ and $(X,\dist,\mm,o)$ be pointed proper
geodesic metric measure spaces with locally finite Radon measures with full
support.  Suppose that
\[
 (X_j,\dist_j,\mm_j,o_j)
 \longrightarrow (X,\dist,\mm,o)
\]
in the pointed measured Gromov--Hausdorff topology.  If $K_j\to K$,
$\limsup_j\delta_j\le\delta$, and each $X_j$ satisfies
$\BCD_{\delta_j}(K_j,\infty)$, then $X$ satisfies
$\BCD_\delta(K,\infty)$.
\end{theorem}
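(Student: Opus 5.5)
The plan mirrors the compact stability theorem, with compactness replaced by localization to bounded sets. First I realize the pointed measured Gromov--Hausdorff convergence in a common proper metric space $(Z,\dist_Z)$. In $Z$ one has $X_j\to X$ in the local Hausdorff sense, $o_j\to o$, and $\mm_j\rightharpoonup\mm$ locally weakly. Then I fix $\Omega=\sum_i\lambda_i\delta_{\mu_i}$ with $\mu_i\in\mathrm D(\Ent_\mm)$.

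\textbf{Step 1: reduce to compact supports.} I first treat compactly supported marginals. Suppose every $\mu_i$ has support in some ball $B_R(o)$. Lemma~\ref{lem:local-entropy-convergence} gives recovery measures $\mu_{j,i}$, supported in a fixed compact set and concentrated on $X_j$, with $W_2^Z(\mu_{j,i},\mu_i)\to0$ and $\Ent_{\mm_j}(\mu_{j,i})\to\Ent_\mm(\mu_i)$. To keep them on $X_j$, I take the recovery densities against $\mm_j$, which is supported on $X_j$. Set $\Omega_j=\sum_i\lambda_i\delta_{\mu_{j,i}}$.

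\textbf{Step 2: pass barycenters to the limit.} I choose $\bar\mu_j\in\Bary(\Omega_j)$ satisfying the $\BCD_{\delta_j}(K_j,\infty)$ inequality. By the support observation, $\supp\bar\mu_j\subset\overline B_{2R'}(o_j)$ for a uniform $R'$. This set sits in a fixed compact subset of $Z$, because $Z$ is proper. So a subsequence converges narrowly, hence in $W_2^Z$, to some $\bar\mu$ supported in $X$.

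Next I show $\bar\mu\in\Bary(\Omega)$. By the support observation in $X$, it suffices to test competitors $\eta$ with bounded support. Local Hausdorff convergence provides $\eta_j\in\cP(X_j)$ with $W_2^Z(\eta_j,\eta)\to0$, for instance by pushing forward under nearest-point maps. Passing to the limit in $\sum_i\lambda_iW_2^2(\bar\mu_j,\mu_{j,i})\le\sum_i\lambda_iW_2^2(\eta_j,\mu_{j,i})$ then gives minimality.

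One point needs care: $W_2$ on $X_j$ versus $W_2^Z$. This is harmless when the convergence is realized so that the $X_j$ and $X$ are isometrically embedded in $Z$. Geodesicity of $X$ is not needed for this identification. The same argument gives $\Var(\Omega_j)\to\Var(\Omega)$. The liminf part of Lemma~\ref{lem:local-entropy-convergence} yields $\Ent_\mm(\bar\mu)\le\liminf_j\Ent_{\mm_j}(\bar\mu_j)$, and combining the limits gives the inequality with error $\delta$.

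\textbf{Step 3: remove the compact-support restriction.} This is the step I expect to be the main obstacle, since $\BCD_\delta$ asks for every $\Omega$ with finite-entropy marginals. I would approximate each $\mu_i$ by $\mu_i^R:=\mu_i|_{B_R(o)}/\mu_i(B_R(o))$. Then $W_2(\mu_i^R,\mu_i)\to0$ by finiteness of second moments. Also $\Ent_\mm(\mu_i^R)\to\Ent_\mm(\mu_i)$, using $\rho\log\rho\in L^1$: the positive part is integrable by assumption and the negative part by Lemma~\ref{lem:entropy-regularity}, via the Gaussian-weighted identity. Note that Lemma~\ref{lem:entropy-regularity} applies on $X$ only after $\BCD_\delta$ is known. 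So I would instead derive the Gaussian volume bound on $X$ directly. The bound \eqref{eq:gaussian-volume-growth} holds on the $X_j$ with constants depending only on $K_j$, $\delta_j$ and the measures of fixed balls, which are uniformly controlled by local weak convergence. It therefore passes to $\mm$.

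For each $R$, Step 2 gives barycenters $\bar\mu^R$ of $\Omega^R$ satisfying the inequality. Their second moments are uniformly bounded, since $\Var$ is bounded. Tightness comes from properness, so a subsequence converges narrowly, and I upgrade this to $W_2$ convergence. Here I would use the stability of barycenters in $W_2$: minimizers converge to minimizers when the marginals converge in $W_2$, with the second moments of the barycenters controlled by those of the marginals. The limit is a barycenter of $\Omega$. Lower semicontinuity of $\Ent_\mm$ under $W_2$ convergence, from the Gaussian bound and \eqref{eq:weighted-entropy-identity}, then closes the argument.
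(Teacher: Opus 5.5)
Your proposal is correct and follows essentially the same route as the paper. You first treat compactly supported marginals via Lemma~\ref{lem:local-entropy-convergence} and the support observation, then truncate to $B_R(o)$, get the $W_2$-upgrade from the barycenter consistency theorem \cite[Theorem~3]{LeGouicLoubes}, and close with lower semicontinuity of entropy. The only difference is in how you get Gaussian volume growth on $X$: you transfer uniform bounds from the $X_j$, which works, whereas the paper just notes that the proof of Lemma~\ref{lem:entropy-regularity} uses only compactly supported marginals, so your Step~2 already suffices on $X$. Separately, you do not need that lemma for integrability of the negative part of $\rho\log\rho$, since this is automatic for $\mu_i\in\mathrm D(\Ent_\mm)$.
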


\begin{proof}
	Realize the pointed measured Gromov--Hausdorff convergence in a common
	proper metric space $Z$, so that
	\[
	o_j\to o,\qquad X_j\to X
	\]
	locally in Hausdorff distance and $\mathfrak m_j\rightharpoonup\mathfrak m$
	locally weakly.
	
	We first consider
	\[
	\Omega=\sum_i\lambda_i\delta_{\mu_i},
	\qquad
	\mu_i\in D(\Ent_{\mathfrak m}),
	\]
	with all $\mu_i$ compactly supported.
	By Lemma~\ref{lem:local-entropy-convergence}, there exist $\mu_{j,i}\in\mathcal P(X_j)$, supported in a
	fixed compact subset of $Z$, such that
	\[
	W_2^Z(\mu_{j,i},\mu_i)\to0,
	\qquad
	\Ent_{\mathfrak m_j}(\mu_{j,i})
	\to\Ent_{\mathfrak m}(\mu_i).
	\]
	Let $\bar\mu_j$ be a barycenter of
	$\Omega_j:=\sum_i\lambda_i\delta_{\mu_{j,i}}$ satisfying the
	$\BCD_{\delta_j}(K_j,\infty)$ inequality.
	
	By the support observation above, the measures $\bar\mu_j$ are supported
	in a fixed compact subset of $Z$. Hence, after passing to a subsequence,
	\[
	W_2^Z(\bar\mu_j,\bar\mu)\to0
	\]
	for some $\bar\mu\in\mathcal P(X)$.
	Approximating compactly supported competitors on $X$ by measures on
	$X_j$ and passing to the limit in the barycenter minimality inequality
	shows that
	\[
	\bar\mu\in\Bary(\Omega).
	\]
	Moreover,
	\[
	\Var(\Omega_j)\to\Var(\Omega).
	\]
	Lemma~\ref{lem:local-entropy-convergence} and the almost BCD inequalities on $X_j$ therefore give
	\[
	\Ent_{\mathfrak m}(\bar\mu)
	\le
	\sum_i\lambda_i\Ent_{\mathfrak m}(\mu_i)
	-\frac K2\Var(\Omega)+\delta.
	\]
	Thus the almost BCD inequality holds for compactly supported marginals.
	
	The proof of Lemma~\ref{lem:entropy-regularity} uses only compactly supported marginals.
	Applying the same argument on $X$ therefore gives Gaussian volume
	growth and, consequently, the $W_2$-lower semicontinuity of
	$\Ent_{\mathfrak m}$.
	
	Now let the $\mu_i\in D(\Ent_{\mathfrak m})$ be arbitrary and let
	$\mu_i^R$ be their normalized restrictions to $B_R(o)$. Then
	\[
	W_2(\mu_i^R,\mu_i)\to0,
	\qquad
	\Ent_{\mathfrak m}(\mu_i^R)\to\Ent_{\mathfrak m}(\mu_i).
	\]
	For
	\[
	\Omega^R:=\sum_i\lambda_i\delta_{\mu_i^R},
	\]
	choose a barycenter $\bar\mu^R$ satisfying the almost BCD inequality.
	Since $\Omega^R\to\Omega$ in Wasserstein distance on
	$\mathcal P_2(\mathcal P_2(X))$, the consistency theorem \cite[Theorem 3]{LeGouicLoubes} for Wasserstein
	barycenters gives, along a subsequence,
	\[
	\bar\mu^R\to\bar\mu\in\Bary(\Omega)
	\quad\text{in }W_2.
	\]
	We also have $\Var(\Omega^R)\to\Var(\Omega)$. Passing to the limit by the
	lower semicontinuity of entropy proves the desired inequality for
	$\Omega$.
\end{proof}

\section{Riemannian rigidity and quantitative tangents}\label{sec:exact}

\subsection{Strong finite-dimensional curvature--dimension conditions}

\begin{proposition}[Curvature and PI properties]\label{prop:finite-pi}
Let $(X,\dist,\mm)$ satisfy $\BCD(K,N)$, where $\mm$ is a locally finite
Radon measure with full support and $1<N<\infty$.  Then $X$ is a strong $\CDe(K,N)$
space and, equivalently, a strong $\CDs(K,N)$ space.  It is proper, locally
doubling, and supports a local weak $(1,1)$-Poincar\'e inequality.  It also
carries the canonical finite-dimensional Cheeger tangent and cotangent normed
bundles.
\end{proposition}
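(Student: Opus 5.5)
The first step is to get strong $\CDe(K,N)$ directly from $\BCD(K,N)$ through its two-point case. Let $\mu_0,\mu_1\in\mathrm D(\Ent_\mm)$, let $(\mu_t)$ be any $W_2$-geodesic between them, and put $D=W_2(\mu_0,\mu_1)$. Every $\mu_t$ is a barycenter of $\Omega_t=(1-t)\delta_{\mu_0}+t\delta_{\mu_1}$, with $r_0=tD$ and $r_1=(1-t)D$. I will adapt Proposition~\ref{prop:every-barycenter} to the finite-dimensional inequality \eqref{eq:finite-bcd}. First, Lemma~\ref{lem:finite-to-infinite} gives $\BCD_0(K,\infty)$, and Lemma~\ref{lem:entropy-regularity} then shows that $\mu_t$ has finite entropy. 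Next, adjoin $\varepsilon\delta_{\mu_t}$, so that $\mu_t$ becomes the unique barycenter of $(1-\varepsilon)\Omega_t+\varepsilon\delta_{\mu_t}$, apply \eqref{eq:finite-bcd}, and let $\varepsilon\downarrow0$. Both sides are continuous in $\varepsilon$, and the added atom has $r=0$ with coefficients equal to $1$. The outcome is \eqref{eq:finite-bcd} at every barycenter, hence along every geodesic.

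The resulting inequality reads
\[(1-t)\tfrac{tD}{s(tD)}U_0+t\tfrac{(1-t)D}{s((1-t)D)}U_1\le U_t\Big((1-t)\tfrac{tD}{t_\kappa(tD)}+t\tfrac{(1-t)D}{t_\kappa((1-t)D)}\Big),\qquad \kappa=K/N.\]
An elementary trigonometric identity shows that, after division, the coefficient of $U_0$ is $\sigma^{(1-t)}_\kappa(D)$. Indeed, $\frac{1}{t_\kappa(a)}+\frac1{t_\kappa(b)}=\frac{s_\kappa(a+b)}{s_\kappa(a)s_\kappa(b)}$, so the bracket equals $t(1-t)D\,s_\kappa(D)/(s_\kappa(tD)s_\kappa((1-t)D))$. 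The $U_0$ coefficient therefore becomes $s_\kappa((1-t)D)/s_\kappa(D)$, and the $U_1$ coefficient becomes $\sigma^{(t)}_\kappa(D)$. This is exactly \eqref{eq:cde-definition} for every geodesic, that is, strong $\CDe(K,N)$. The equivalence with strong $\CDs(K,N)$ then follows from \cite[Corollary~3.13]{EKS}. When $K>0$ I will handle the endpoint $D=\pi\sqrt{N/K}$ using the extended-value convention. Positivity of the weighted $B$-sum, as in Lemma~\ref{lem:finite-to-infinite}, gives $D<\pi\sqrt{N/K}$ whenever the entropies are finite.

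With the $\CDs(K,N)$ inequality in hand, the standard consequences follow. Bishop--Gromov volume comparison \cite{SturmII}, or its reduced version \cite{BacherSturm}, gives local doubling. Together with completeness, doubling gives properness, since bounded closed sets are totally bounded. Corollary~\ref{cor:finite-enb} shows that $X$ is essentially non-branching. For the local weak $(1,1)$-Poincar\'e inequality I will use Rajala's result for $\CD^*(K,N)$ spaces, or its essentially non-branching variant; $\CD(K,\infty)$ also suffices for Rajala's argument. Finally, Cheeger's differentiation theorem on PI spaces provides measurable finite-dimensional charts. These in turn give the tangent and cotangent normed bundles, with dimension bounded in terms of the doubling constant.

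I expect the main obstacle to be the reduction from one barycenter to every barycenter in the finite-dimensional form. The finiteness and continuity of the coefficients as $\varepsilon\to0$ have to be checked, along with the $K>0$ endpoint issues. The measure-theoretic regularity of $\mu_t$ needed for the adjoined-atom trick comes from the $\BCD_0(K,\infty)$ lower semicontinuity. The remaining steps only cite known results for $\CDs$ spaces.
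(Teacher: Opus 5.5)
Your proposal is correct, but it reaches strong $\CDe(K,N)$ by a different route from the paper. The paper first uses Corollary~\ref{cor:finite-enb} and the Rajala--Sturm uniqueness theorem to get a unique geodesic between measures in $\mathrm D(\Ent_\mm)$. The two-point family therefore has the unique barycenter $\mu_t$, which must be the one supplied by $\BCD(K,N)$, and the addition formula yields \eqref{eq:strong-cde}.

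You instead prove a finite-dimensional analogue of Proposition~\ref{prop:every-barycenter}. Once $\mu_t\in\mathrm D(\Ent_\mm)$, adjoining $\varepsilon\delta_{\mu_t}$ makes $\mu_t$ the unique barycenter. The new atom has $r=0$ and both coefficients equal to $1$, so the term $\varepsilon U_N(\mu_t)$ appears identically on both sides of \eqref{eq:finite-bcd} and cancels. Dividing by $1-\varepsilon$ returns the two-point inequality exactly, so the limit $\varepsilon\downarrow0$ is not needed. The same cancellation gives \eqref{eq:finite-bcd} at every barycenter of any admissible family.

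Your route is more elementary, and the curvature conclusion no longer depends on essential non-branching. The paper's route buys uniqueness of geodesics and, through Li's globalization, the unreduced $\CD(K,N)$ condition, to which it applies Rajala's $\CD(K,N)$ Poincar\'e theorem. You cite Rajala's results for $\CDs(K,N)$ or $\CD(K,\infty)$ and bypass globalization.

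Two small corrections. First, finiteness of $\Ent_\mm(\mu_t)$ comes from Corollary~\ref{cor:strong-almost} with $\delta=0$. Lemma~\ref{lem:entropy-regularity} alone only gives $\Ent_\mm>-\infty$, which you still need so that $U_N(\mu_t)<\infty$ in the cancellation. Second, at the doubling step only the reduced Bishop--Gromov estimate is available to you (Bacher--Sturm, or the Erbar--Kuwada--Sturm version the paper uses). Sturm's unreduced version would require the globalization step first.
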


\begin{proof}
Lemma~\ref{lem:finite-to-infinite} and
Corollary~\ref{cor:strong-almost}, with $\delta=0$, give the
$\CD(K,\infty)$ inequality along every geodesic whose endpoints have finite
entropy.  Hence the space is strong
$\CD(K,\infty)$ in the sense of \cite[(2.2)]{RajalaSturm}.
Corollary~\ref{cor:finite-enb} shows that $X$ is essentially non-branching.
The uniqueness theorem \cite[Corollary~1.4]{RajalaSturm} therefore gives a unique optimal
dynamical plan between any two absolutely continuous probability measures.

Let $(\mu_t)$ be the Wasserstein geodesic induced by this plan between
$\mu_0,\mu_1\in\mathrm D(\Ent_\mm)$.  In any geodesic metric space,
the barycenters of
$(1-t)\delta_{\mu_0}+t\delta_{\mu_1}$ are exactly the time-$t$ points of
Wasserstein geodesics \cite[Corollary~7.22]{Villani2009}.  The barycenter
for which the finite-dimensional BCD inequality holds is therefore $\mu_t$.
Put $D=W_2(\mu_0,\mu_1)$.  The addition formula
\[
 \frac{c_\kappa(a)}{s_\kappa(a)}+
 \frac{c_\kappa(b)}{s_\kappa(b)}
 =\frac{s_\kappa(a+b)}{s_\kappa(a)s_\kappa(b)}
\]
with $\kappa=K/N$, $a=tD$, and $b=(1-t)D$ turns \eqref{eq:finite-bcd} into
\begin{equation}\label{eq:strong-cde}
 U_N(\mu_t)\ge\sigma_{K/N}^{(1-t)}(D)U_N(\mu_0)
 +\sigma_{K/N}^{(t)}(D)U_N(\mu_1).
\end{equation}
Thus \eqref{eq:strong-cde} proves strong $\CDe(K,N)$.  The
extended-coefficient convention covers the positive-curvature boundary.
Erbar--Kuwada--Sturm
\cite[Corollary~3.13]{EKS} identify this condition with strong
$\CDs(K,N)$.  The generalized Bishop--Gromov estimate
\cite[Proposition~3.6]{EKS} gives local measure doubling.  Full support then
gives local metric doubling and, in particular, local compactness.  Since
$X$ is complete and geodesic, the Hopf--Rinow theorem for length spaces shows
that $X$ is proper.  By essential non-branching and
\cite[Corollary~1.2]{LiGlobalization}, the space also satisfies the
unreduced $\CD(K,N)$ condition.  Applying
\cite[Theorem~1.2]{RajalaPI} with curvature parameter $\min\{K,0\}$ gives
the local weak $(1,1)$-Poincar\'e inequality.  The finite-dimensional measurable cotangent
bundle and differentiability charts are given by
\cite[Theorem~4.38 and Definition~4.42]{CheegerDifferentiability}.
\end{proof}

\subsection{CKS tangent structure and one-dimensional transports}

Fix a point $x$ at which the tangent theorem of
Cheeger--Kleiner--Schioppa applies.  We apply the theorem in a doubling
neighborhood of $x$; restricting to this neighborhood does not change
the pointed blow-ups at $x$.  Given scales $r_j\downarrow0$,
pass to a subsequence and choose normalization constants $c_j>0$ as in
\cite[Theorem~1.12]{CKS}.  We obtain a pointed measured tangent
$(Y,\dist_Y,\mh,o)$ and a blow-up $\widehat\phi$ of the Cheeger chart, with
\[
(X,r_j^{-1}\dist,c_j\mm,x)\to(Y,\dist_Y,\mh,o)
\]
in the pointed measured sense.  Set $V=T_xX$ and equip it with the norm
$\|\cdot\|_x$ dual to Cheeger's cotangent norm
\cite[pp.~460--461]{CheegerDifferentiability}.
By \cite[Theorems~1.7 and~1.12(1)]{CKS}, the blow-up chart is a metric
submersion
\begin{equation}\label{eq:tangent-submersion}
\widehat\phi:Y\longrightarrow(V,\|\cdot\|_x),
\qquad \widehat\phi(o)=0.
\end{equation}
Since $V$ is finite-dimensional, identify $V^*$ with $T_x^*X$ and denote by
$\|\cdot\|_{x,*}$ the norm on $V^*$ dual to $\|\cdot\|_x$; thus
$\|r\|_{x,*}=\sup_{\|v\|_x\le1}|r(v)|$.  For every Lipschitz function $f$,
the Cheeger differential satisfies
\begin{equation}\label{eq:cks-sobolev-norm}
 \|\mathrm D_x f\|_{x,*}=\operatorname{lip}f(x)=|\mathrm Df|(x)
 \qquad\text{for $\mm$-almost every $x$}.
\end{equation}
Here $\operatorname{lip}f$ denotes the common almost-everywhere value of its
upper and lower pointwise Lipschitz constants, and $|\mathrm Df|$ is the
minimal $2$-weak upper gradient.  The PI-space identities follow from
\cite[Corollary~4.41, Theorem~6.1, and Corollary~6.36]{CheegerDifferentiability}.
The identification with the minimal $2$-weak upper gradient is given by
\cite[Theorem~6.2]{AGSCalculus}, applied locally.

An Alberti representation expresses a measure as an integral of
one-dimensional measures carried by Lipschitz curves.  This construction
originates in \cite{AlbertiRankOne}; for its formulation on metric spaces,
see \cite[Section~2]{BateStructure} and \cite{SchioppaAlberti}.
We use the definition allowing curves with unbounded domains
\cite[Definition~2.12]{CKS}.
For each $v\in V$ with $\|v\|_x=1$, Cheeger--Kleiner--Schioppa
\cite[Theorem~1.12(2)]{CKS} provide a Borel family of complete unit-speed
geodesic lines $\gamma:\R\to Y$, denoted by
$\operatorname{Lines}(\widehat\phi,v)$, satisfying
\begin{equation}\label{eq:tangent-linear-line}
\widehat\phi(\gamma(t))=\widehat\phi(\gamma(0))+tv.
\end{equation}
The measure associated with each line is its arclength measure.  More precisely,
\[
 \mh=\int_{\operatorname{Lines}(\widehat\phi,v)}
 \gamma_\#\cL^1\d P_v(\gamma)
\]
for a Borel measure $P_v$ provided by the same theorem.
For every $r\in V^*$, write
$\widehat r:=r\circ\widehat\phi:Y\to\R$.
Kantorovich potentials on $Y$ are taken for the cost $\dist_Y^2/2$.
For a function $\psi:Y\to\R$, its $c$-transform is
$\psi^c(z):=\inf_{y\in Y}\{\dist_Y^2(y,z)/2-\psi(y)\}$.

\begin{lemma}[Entropy estimate along geodesic lines]\label{lem:tangent-line-entropy}
Let $\nu=\rho\mh$ be a compactly supported probability measure with finite
entropy.  Fix
$r\in V^*$ and $\alpha\in\R$ with $\alpha\|r\|_{x,*}^2<1$.
Then there exist a compactly supported
probability measure $\mu_{\alpha,r}$ with finite entropy and an optimal
coupling $\pi_{\alpha,r}$ from $\nu$ to $\mu_{\alpha,r}$ such that
$\alpha\widehat r^{\,2}/2$ is a Kantorovich potential for
$\pi_{\alpha,r}$ and
\begin{equation}\label{eq:tangent-line-entropy}
 \Ent_{\mh}(\mu_{\alpha,r})
 \le\Ent_{\mh}(\nu)-\log(1-\alpha\|r\|_{x,*}^2).
\end{equation}
\end{lemma}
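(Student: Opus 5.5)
The idea is to transport $\nu$ along the CKS geodesic lines in one well-chosen Banach direction, using an explicit affine map on each line. Since $\widehat r$ is affine along these lines, the transport has constant one-dimensional Jacobian $1-\alpha\|r\|_{x,*}^2$. If $r=0$ the claim is trivial with $\mu_{\alpha,r}=\nu$ and the identity coupling, so assume $c:=\|r\|_{x,*}>0$.

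\textbf{Choice of direction and the line transport.} By finite-dimensionality pick $v\in V$ with $\|v\|_x=1$ and $r(v)=c$. By \eqref{eq:tangent-linear-line}, each $\gamma\in\operatorname{Lines}(\widehat\phi,v)$ satisfies $\widehat r(\gamma(t))=\widehat r(\gamma(0))+ct$. Define
\[
 T_\gamma(t):=t-\alpha c\,\widehat r(\gamma(t)).
\]
This is affine in $t$ with slope $s:=1-\alpha c^2>0$, hence a bijection of $\R$. Disintegrate $\nu$ along the Alberti representation of $\mh$:
\[
 \nu=\int \gamma_\#\bigl((\rho\circ\gamma)\cL^1\bigr)\d P_v(\gamma).
\]
Set $\mu_{\alpha,r}:=\int\gamma_\#\bigl((T_\gamma)_\#(\rho\circ\gamma)\cL^1\bigr)\d P_v$ and $\pi_{\alpha,r}:=\int(\gamma,\gamma\circ T_\gamma)_\#\bigl((\rho\circ\gamma)\cL^1\bigr)\d P_v$. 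Since $\nu$ has compact support, $\widehat r$ is bounded on it, so the displacement $|\alpha c\,\widehat r|$ is bounded and $\mu_{\alpha,r}$ is compactly supported. A point of care is that one point of $Y$ may lie on many lines. I would work pointwise at the level of the representation, where the mass is assigned along each line, and check Borel measurability of $(\gamma,t)\mapsto\gamma(T_\gamma(t))$ from the Borel structure of $\operatorname{Lines}$ in \cite[Theorem~1.12]{CKS}. This bookkeeping is the step I expect to be the main technical obstacle.

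\textbf{Entropy.} On each line the pushed density is $\sigma_\gamma=s^{-1}(\rho\circ\gamma)\circ T_\gamma^{-1}$, so
\[
 \int\sigma_\gamma\log\sigma_\gamma\d t=\int(\rho\circ\gamma)\log(\rho\circ\gamma)\d t-\log s .
\]
Both $\mu_{\alpha,r}$ and $\mh$ are superpositions over the same $P_v$ of $\sigma_\gamma\cL^1$ and $\cL^1$, respectively. Joint convexity of $(a,b)\mapsto a\log(a/b)$ (or Jensen for the disintegration) then gives
\[
 \Ent_{\mh}(\mu_{\alpha,r})\le\iint\sigma_\gamma\log\sigma_\gamma\d t\d P_v .
\]
For $\nu$ the density on every line is the restriction of the single function $\rho$, so $\iint\rho\circ\gamma\log(\rho\circ\gamma)\d t\d P_v=\Ent_{\mh}(\nu)$ exactly. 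Combining the two displays yields \eqref{eq:tangent-line-entropy}, and in particular finiteness of the entropy.

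\textbf{Optimality.} Put $\varphi:=\alpha\widehat r^{\,2}/2$. Because $\widehat\phi$ is a metric submersion onto $(V,\|\cdot\|_x)$, the function $\widehat r$ is $c$-Lipschitz, so $\dist_Y(y,z)\ge|\widehat r(y)-\widehat r(z)|/c$. With $a=\widehat r(y)$ and $b=\widehat r(z)$ this gives
\[
 \frac{\dist_Y^2(y,z)}2-\varphi(y)\ge\frac{(a-b)^2}{2c^2}-\frac{\alpha a^2}{2}.
\]
Minimizing over $a\in\R$, which is possible because $\alpha c^2<1$, gives the lower bound $-\alpha b^2/(2s)$ for every $y$, hence $\varphi^c(z)\ge-\alpha\widehat r(z)^2/(2s)$. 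On a pair $(\gamma(t),\gamma(T_\gamma(t)))$ we have $\dist_Y=|t-T_\gamma(t)|$ and $|\widehat r$-difference$|=c|t-T_\gamma(t)|$, so both inequalities become equalities, the minimizer being $a=\widehat r(\gamma(t))$. Thus $\varphi(y)+\varphi^c(z)=\dist_Y^2(y,z)/2$ on $\operatorname{supp}\pi_{\alpha,r}$, and $\varphi^c$ equals the lower bound there. This makes $\pi_{\alpha,r}$ $c$-cyclically monotone with Kantorovich potential $\varphi$. Since the supports are compact, $\pi_{\alpha,r}$ is an optimal coupling between $\nu$ and $\mu_{\alpha,r}$.
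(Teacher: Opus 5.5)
Your proposal is correct and is essentially the paper's proof. Your map $T_\gamma(t)=t-\alpha c\,\widehat r(\gamma(t))$ coincides with the paper's $S_\beta(\gamma,t)=(\gamma,Jt-\beta b_\gamma)$. The entropy step is the paper's: lift $\nu$ to $\operatorname{Lines}(\widehat\phi,v)\times\R$, change variables affinely on each line, and apply data processing under $\mathrm{ev}$. The optimality step is also the paper's: a Lipschitz bound for $\widehat r$, then minimization of a one-variable quadratic, with equality because $\widehat r(z)=s\,\widehat r(y)$ on transported pairs.

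The only slip is in the per-line identity, which should carry the factor $\int\rho\circ\gamma\d t$ in front of $-\log s$. This factor integrates to $1$ over $P_v$, so the conclusion is unaffected. Also, the lower bound $\Ent_{\mh}(\mu_{\alpha,r})>-\infty$ comes from compact support and local finiteness of $\mh$, not from the upper bound.
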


\begin{proof}
If $r=0$, take $\mu_{\alpha,0}=\nu$ and the diagonal coupling.  Henceforth
fix $r\ne0$.  

Since $V$ is finite-dimensional, the supremum in the
definition of $\|r\|_{x,*}$ is attained.  Choose $v_r\in V$ such that
$\|v_r\|_x=1$ and $r(v_r)=\|r\|_{x,*}$, and define
$q_r:=\widehat r/\|r\|_{x,*}:Y\to\R$.
For $y,z\in Y$, the fact that $\widehat\phi$ is $1$-Lipschitz gives
\[
 |q_r(y)-q_r(z)|
 \le \|\widehat\phi(y)-\widehat\phi(z)\|_x
 \le \dist_Y(y,z).
\]
Thus $q_r$ is $1$-Lipschitz as a function on $Y$.

Apply the CKS representation with $v=v_r$, and write
$\mathscr L:=\operatorname{Lines}(\widehat\phi,v_r)$ and $P:=P_{v_r}$.
For every $\gamma\in\mathscr L$, \eqref{eq:tangent-linear-line} gives
$q_r(\gamma(t))=t+b_\gamma$, where $b_\gamma:=q_r(\gamma(0))$.
Let $\widetilde{\mathfrak m}:=P\otimes\cL^1$ on
$\mathscr L\times\R$.  For $t\in\R$, let
$e_t:\mathscr L\to Y$ be the evaluation map $e_t(\gamma):=\gamma(t)$.
Write $\mathrm{ev}:\mathscr L\times\R\to Y$ for the joint evaluation map
$\mathrm{ev}(\gamma,t):=e_t(\gamma)$.
With the Borel structure on parametrized curves used in
\cite[Definition~2.12]{CKS}, the map $\mathrm{ev}$ is Borel.  Since $q_r$
is continuous, $\gamma\mapsto b_\gamma=q_r(e_0(\gamma))$ is also Borel.
The defining identity for the Alberti representation is
$\mathrm{ev}_\#\widetilde{\mathfrak m}=\mh$.

Put $\beta=\alpha\|r\|_{x,*}^2$ and $J=1-\beta>0$.  Lift
\(\nu=\rho\mh\) to $\mathscr L\times\R$ by
$\widetilde\nu:=(\rho\circ\mathrm{ev})\widetilde{\mathfrak m}$.
Then $\mathrm{ev}_\#\widetilde\nu=\nu$ and
$\Ent_{\widetilde{\mathfrak m}}(\widetilde\nu)=\Ent_{\mh}(\nu)$.

Transport each line by the affine map
\(S_\beta(\gamma,t)=(\gamma,Jt-\beta b_\gamma)\).  Its derivative in
the line parameter is \(J\), so
\((S_\beta)_\#\widetilde{\mathfrak m}
=J^{-1}\widetilde{\mathfrak m}\).  The one-dimensional change-of-variables
formula gives
\[
 \Ent_{\widetilde{\mathfrak m}}
 \bigl((S_\beta)_\#\widetilde\nu\bigr)
 =\Ent_{\mh}(\nu)-\log J.
\]
Define
\[
 \mu_{\alpha,r}:=\mathrm{ev}_\#(S_\beta)_\#\widetilde\nu,
 \qquad
 \pi_{\alpha,r}:=(\mathrm{ev},\mathrm{ev}\circ S_\beta)_\#\widetilde\nu.
\]
The second measure is a coupling of \(\nu\) and \(\mu_{\alpha,r}\).
The data-processing inequality for relative entropy, applied to the Borel
map $\mathrm{ev}$, gives
\[
 \Ent_{\mh}(\mu_{\alpha,r})
 \le \Ent_{\widetilde{\mathfrak m}}
 \bigl((S_\beta)_\#\widetilde\nu\bigr).
\]
Together with the change-of-variables identity, this proves
\eqref{eq:tangent-line-entropy}.  For the data-processing inequality, see
\cite{Csiszar}.  For the present $\sigma$-finite reference measures, it
follows by disintegration with respect to $\mathrm{ev}$ and the conditional
form of Jensen's inequality.

It remains to verify optimality.  For
$\widetilde\nu$-almost every $(\gamma,t)\in\mathscr L\times\R$, set
\(y=\gamma(t)\) and \(z=\gamma(Jt-\beta b_\gamma)\).
Put $a=q_r(y)=t+b_\gamma$.  Then $q_r(z)=Ja$ and
$\dist_Y(y,z)=|\beta a|$.  For any $w\in Y$, the $1$-Lipschitz property of
$q_r$ gives $\dist_Y(w,z)\ge|q_r(w)-Ja|$.  And the quadratic
function $s\mapsto (s-Ja)^2/2-\beta s^2/2$
has its minimum at $s=a$.  It follows that
\[
 \frac\beta2q_r(y)^2+
 \left(\frac\beta2q_r^2\right)^c(z)
 =\frac12\dist_Y^2(y,z).
\]
Thus $\alpha\widehat r^{\,2}/2=\beta q_r^2/2$ is a Kantorovich potential for
$\pi_{\alpha,r}$, and the coupling is optimal.
\end{proof}

\subsection{Quantitative parallelogram estimate}

\begin{theorem}[Quantitative infinitesimal Hilbertianity]
\label{thm:quantitative-hilbertianity}
Let $(X,\dist,\mm)$ be a complete separable geodesic metric measure space
with a locally finite Radon measure with full support.  Suppose that $X$ is locally
doubling, supports a local weak $(1,1)$-Poincar\'e inequality, and satisfies
$\BCD_\delta(K,\infty)$.  Then
\begin{equation}\label{eq:fiber-pstar}
 \mathfrak p_{\mathrm{par}}(T_x^*X)\le8\sqrt\delta
 \qquad\text{for $\mm$-almost every }x.
\end{equation}
\end{theorem}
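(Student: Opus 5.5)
The plan is to blow up at a generic point, transfer the almost BCD inequality to the tangent, and there test it against a four-marginal family built from Lemma~\ref{lem:tangent-line-entropy}.

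\textbf{Step 1: pass to a tangent.} Fix $x$ at which the CKS tangent theorem and \eqref{eq:cks-sobolev-norm} hold. Take a pointed measured tangent $(Y,\dist_Y,\mh,o)$ with blow-up chart $\widehat\phi:Y\to(V,\|\cdot\|_x)$, as in \eqref{eq:tangent-submersion}.

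\begin{itemize}
\item The rescaled space $(X,r_j^{-1}\dist,c_j\mm)$ satisfies $\BCD_\delta(r_j^2K,\infty)$. This uses the invariance of $\delta$ under $(\dist,K)\mapsto(a\dist,a^{-2}K)$ and $\mm\mapsto c\mm$ noted after Definition~\ref{def:almost-bcd}.
\item Local doubling makes the rescaled balls proper. Restricting to a doubling neighbourhood does not affect blow-ups.
\item Theorem~\ref{thm:pointed-stability}, with $K_j=r_j^2K\to0$, then gives that $Y$ satisfies $\BCD_\delta(0,\infty)$.
\end{itemize}

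I expect this to be the main technical obstacle. The stability theorem is stated for globally proper geodesic spaces with full support, whereas here the convergence is only local. The cleanest route is to rerun its proof for compactly supported marginals: by the support observation, all barycenters of marginals in $B_R$ stay in $\overline B_{2R}$, so only a bounded region of the rescaled spaces is ever used. The compactly supported case is all we need below.

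\textbf{Step 2: build four marginals whose barycenter is $\nu$.} Fix $p,q\in V^*$, a compactly supported $\nu=\rho\mh$ with finite entropy, and a parameter $s\in\R$. Consider the four pairs
\[
(r_i,\alpha_i):\quad (p+q,\ s),\qquad (p-q,\ s),\qquad (\sqrt2\,p,\ -s),\qquad (\sqrt2\,q,\ -s),
\]
each with weight $\lambda_i=1/4$. Lemma~\ref{lem:tangent-line-entropy} supplies, for each pair, a measure $\mu_i=\mu_{\alpha_i,r_i}$ and an optimal coupling from $\nu$ with Kantorovich potential $\varphi_i=\alpha_i\widehat{r_i}^{\,2}/2$. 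The admissibility requirement is $|s|M<1/2$, where
\[
M:=\max\{\|p+q\|_{x,*}^2,\ \|p-q\|_{x,*}^2,\ 2\|p\|_{x,*}^2,\ 2\|q\|_{x,*}^2\}.
\]

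Since $\widehat r=r\circ\widehat\phi$ is linear in $r$, the parallelogram identity holds pointwise for the functions:
\[
\widehat{(p+q)}^{\,2}+\widehat{(p-q)}^{\,2}=2\widehat p^{\,2}+2\widehat q^{\,2}.
\]
Hence $\sum_i\lambda_i\varphi_i\equiv0$. Kantorovich duality then shows that $\nu$ is a barycenter: for every $\eta$,
\[
\tfrac12\sum_i\lambda_iW_2^2(\eta,\mu_i)\ge\sum_i\lambda_i\Bigl(\int\varphi_i\d\eta+\int\varphi_i^c\d\mu_i\Bigr)=\sum_i\lambda_i\int\varphi_i^c\d\mu_i=\tfrac12\sum_i\lambda_iW_2^2(\nu,\mu_i).
\]

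\textbf{Step 3: apply BCD at $\nu$ and optimise.} Theorem~\ref{thm:intro-almost}(i), i.e.\ Proposition~\ref{prop:every-barycenter} applied on $Y$, gives the inequality at $\nu$ with $K=0$. Combining it with \eqref{eq:tangent-line-entropy}, the entropy of $\nu$ cancels and we obtain
\[
-\log(1-sa)-\log(1-sb)-\log(1+sc)-\log(1+sd)\le4\delta,
\]
where $a=\|p+q\|_{x,*}^2$, $b=\|p-q\|_{x,*}^2$, $c=2\|p\|_{x,*}^2$ and $d=2\|q\|_{x,*}^2$.

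\begin{itemize}
\item Use $-\log(1+u)\ge -u+\text{(a second-order error)}$, controlled for $|u|\le1/2$ by a multiple of $u^2$. This yields
\[
s\,(a+b-c-d)\le4\delta+C s^2M^2 .
\]
\item Both signs of $s$ are allowed, so the bound holds for $|a+b-c-d|$.
\item Choose $|s|=\sqrt\delta/M$, provided it is admissible. Otherwise $\delta$ is of order one and the bound $8\sqrt\delta$ is trivial, since the normalized defect never exceeds $2$.
\end{itemize}
This gives $|a+b-c-d|\le C'\sqrt\delta\,M$. Tracking the constants in the logarithm expansion should give $C'=8$. Taking the supremum over $(p,q)$ bounds $\mathfrak p_{\mathrm{par}}$ of $(V^*,\|\cdot\|_{x,*})=T_x^*X$ by $8\sqrt\delta$. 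Since this holds at $\mm$-almost every admissible $x$, this is \eqref{eq:fiber-pstar}.
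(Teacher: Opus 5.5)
This is essentially the paper's proof. You use the same blow-up plus pointed stability, the same four potentials summing to zero with Kantorovich duality, Proposition~\ref{prop:every-barycenter} at $\nu$, and $\log(1+z)\ge z-z^2$; your $|s|=\sqrt\delta/M$, with $s\to0$ when $\delta=0$, gives the same constant $8$ as the paper's $\varepsilon=|G|/(8M^2)$, and your Step~1 worry is settled in the paper simply by noting that $X$ is proper, so Theorem~\ref{thm:pointed-stability} applies directly. One sign slip: combining the BCD inequality with \eqref{eq:tangent-line-entropy} gives $\log(1-sa)+\log(1-sb)+\log(1+sc)+\log(1+sd)\le4\delta$, not its negative (your sign together with $-\log(1+u)\ge-u$ would give a bound linear in $\delta$, which Theorem~\ref{thm:intro-finsler} rules out); because you use both signs of $s$, your final estimate is unaffected.
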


\begin{proof}
The local doubling assumption and full support imply local metric doubling
and hence local compactness.  Since $X$ is complete and geodesic, it is
proper.

Fix a point $x$ at which Cheeger differentiability holds and the CKS tangent
theorem applies.  Choose a pointed measured tangent and a blow-up chart as in
\eqref{eq:tangent-submersion}--\eqref{eq:tangent-linear-line}.
If the tangent is obtained from scales $r_j\downarrow0$, then the rescaled
spaces satisfy $\BCD_\delta(r_j^2K,\infty)$.  Multiplying the reference
measure by a positive constant does not change this condition.  Hence
Theorem~\ref{thm:pointed-stability} shows that
$(Y,\dist_Y,\mh)$ satisfies $\BCD_\delta(0,\infty)$.

The case $p=q=0$ is trivial.  Fix $p,q\in T_x^*X$ with $(p,q)\ne(0,0)$ and
abbreviate
\[
 a=\|p+q\|_{x,*}^2,\quad b=\|p-q\|_{x,*}^2,\quad
 c=2\|p\|_{x,*}^2,\quad d=2\|q\|_{x,*}^2,\quad
 M=\max\{a,b,c,d\}.
\]
Let $\nu=\rho\mh$ be a compactly supported probability measure with finite
entropy.  For
$s\in\{-1,1\}$ and $0<\varepsilon M\le1/2$, take
\[
 \psi_1=\frac{s\varepsilon}{2}\widehat{p+q}^{\,2},\quad
 \psi_2=\frac{s\varepsilon}{2}\widehat{p-q}^{\,2},\quad
 \psi_3=-s\varepsilon\widehat p^{\,2},\quad
 \psi_4=-s\varepsilon\widehat q^{\,2}.
\]
Their sum is zero, and each has at most quadratic growth.
Apply Lemma~\ref{lem:tangent-line-entropy} separately to the four
potentials.  It gives four probability measures $\mu_i$ of finite entropy
and optimal couplings for which the functions $\psi_i$ are Kantorovich
potentials.  

For any
$\eta\in\Pp(Y)$, Kantorovich duality \cite[Theorem~5.10]{Villani2009} gives
\[
 \frac12W_2^2(\eta,\mu_i)
 \ge \int_Y\psi_i\d\eta+\int_Y\psi_i^c\d\mu_i.
\]
After summing these inequalities with weight $1/4$, the terms containing
$\eta$ cancel because $\sum_i\psi_i=0$.  At $\eta=\nu$, equality holds in
every inequality by the choice of the optimal couplings.  Hence
$\nu\in\Bary(\frac14\sum_{i=1}^4\delta_{\mu_i})$.

By Proposition~\ref{prop:every-barycenter}, the almost BCD inequality holds
at this barycenter.   Apply the
almost BCD inequality and then the four entropy bounds from
Lemma~\ref{lem:tangent-line-entropy}.  This gives
\[
 \log(1-s\varepsilon a)+\log(1-s\varepsilon b)
 +\log(1+s\varepsilon c)+\log(1+s\varepsilon d)\le4\delta.
\]

Set $G=a+b-c-d$.  Choose $s$ so that $s(c+d-a-b)=|G|$.  Since
$\log(1+z)\ge z-z^2$ for $|z|\le1/2$, the left-hand side is at least
$\varepsilon|G|-4\varepsilon^2M^2$.

If $G=0$, there is nothing to prove.  Otherwise, taking
$\varepsilon=|G|/(8M^2)$ is admissible, since $|G|\le2M$ gives
$\varepsilon M\le1/4$.  Substitution yields
$|G|^2/(16M^2)\le4\delta$, hence $|G|/M\le8\sqrt\delta$.
Taking the supremum
over $p,q$ proves
\eqref{eq:fiber-pstar}.
\end{proof}

When $\delta=0$, Theorem~\ref{thm:quantitative-hilbertianity} gives the
parallelogram identity in $T_x^*X$.  In view of
\eqref{eq:cks-sobolev-norm}, Hilbertianity of the cotangent norm almost
everywhere is equivalent to infinitesimal Hilbertianity; see also
\cite[Corollary~6.7]{ErikssonBiqueSoultanis}.

Write $W^{1,2}(X)$ for the space of $L^2(\mm)$ functions whose minimal
$2$-weak upper gradient belongs to $L^2(\mm)$.  The Cheeger energy is
normalized by $\Ch(f):=\frac12\int_X|\mathrm Df|^2\d\mm$.

For a locally doubling PI space, set
\[
 \mathfrak H(X):=\operatorname*{ess\,sup}_{x\in X}
 \mathfrak p_{\mathrm{par}}(T_x^*X),
 \qquad
 \Delta_{\BCD}^{K}(X):=
 \inf\{\varepsilon\ge0:X\text{ satisfies }
 \BCD_\varepsilon(K,\infty)\},
\]
where $\inf\varnothing=+\infty$.

\begin{corollary}[Parallelogram estimate for the Cheeger energy]
	\label{cor:quantitative-cheeger}
	Under the hypotheses of Theorem~\ref{thm:quantitative-hilbertianity},
	\[
		\Delta_{\BCD}^{K}(X)\ge\frac1{64}\mathfrak H(X)^2.
	\]
	For every $f,g\in W^{1,2}(X)$, one also has
	\begin{align}
		&\left|\Ch(f+g)+\Ch(f-g)-2\Ch(f)-2\Ch(g)\right|\notag\\
		&\qquad\le16\sqrt\delta\bigl(\Ch(f)+\Ch(g)\bigr).
		\label{eq:cheeger-parallelogram-defect}
	\end{align}
\end{corollary}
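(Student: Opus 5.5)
The first claim follows from Theorem~\ref{thm:quantitative-hilbertianity}. If $X$ satisfies $\BCD_\varepsilon(K,\infty)$ for some $\varepsilon\ge0$, then \eqref{eq:fiber-pstar} with $\delta$ replaced by $\varepsilon$ gives $\mathfrak p_{\mathrm{par}}(T_x^*X)\le8\sqrt\varepsilon$ for $\mm$-a.e.\ $x$. Hence $\mathfrak H(X)\le8\sqrt\varepsilon$. Taking the infimum over admissible $\varepsilon$ yields $\mathfrak H(X)\le 8\sqrt{\Delta^K_{\BCD}(X)}$, which after squaring is the stated bound. If no $\varepsilon$ is admissible, the bound holds trivially because the infimum is $+\infty$.

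For \eqref{eq:cheeger-parallelogram-defect}, I will work pointwise and then integrate. The first step is to pass from $|\mathrm Df|$ to the Cheeger differential. By \eqref{eq:cks-sobolev-norm}, $|\mathrm Df|(x)=\|\mathrm D_xf\|_{x,*}$ for Lipschitz $f$. To extend this to $W^{1,2}(X)$, I will use that in PI spaces Lipschitz functions are dense in energy, and that the Cheeger differential $f\mapsto \mathrm D f$ is linear and extends to a bounded linear map into $L^2$ sections of the cotangent bundle, satisfying $|\mathrm D f|=\|\mathrm Df\|_{*}$ a.e. Both facts come from Cheeger's theory. Applying linearity, $\mathrm D(f\pm g)=\mathrm Df\pm\mathrm Dg$, and setting $p=\mathrm D_xf$ and $q=\mathrm D_xg$, the definition of $\mathfrak p_{\mathrm{par}}$ gives for a.e.\ $x$
\[
\bigl||\mathrm D(f+g)|^2+|\mathrm D(f-g)|^2-2|\mathrm Df|^2-2|\mathrm Dg|^2\bigr|
\le 8\sqrt\delta\,\max\{\|p+q\|_*^2,\|p-q\|_*^2,2\|p\|_*^2,2\|q\|_*^2\}.
\]

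Next I bound the maximum. By the triangle inequality, $\|p\pm q\|_*^2\le 2\|p\|_*^2+2\|q\|_*^2$, and trivially $2\|p\|_*^2$ and $2\|q\|_*^2$ are each at most $2(\|p\|_*^2+\|q\|_*^2)$. Hence $M\le 2(|\mathrm Df|^2+|\mathrm Dg|^2)$. Integrating and multiplying by $1/2$ gives
\[
\bigl|\Ch(f+g)+\Ch(f-g)-2\Ch(f)-2\Ch(g)\bigr|
\le 8\sqrt\delta\cdot\tfrac12\int 2(|\mathrm Df|^2+|\mathrm Dg|^2)\d\mm
=16\sqrt\delta\,(\Ch(f)+\Ch(g)).
\]

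The main obstacle is the extension of the pointwise identity and the linearity of the differential from Lipschitz functions to all of $W^{1,2}$. This is a known part of PI-space theory and needs no new argument, but it is used only locally: Lipschitz approximations in $W^{1,2}$ converge in energy, so the pointwise bound passes to the limit in $L^1$.
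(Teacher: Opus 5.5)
Your proposal is correct and follows the paper's proof: you get the first bound by taking the essential supremum in \eqref{eq:fiber-pstar} and then the infimum over admissible errors, and the second by applying \eqref{eq:fiber-pstar} pointwise to $p=\mathrm D_xf$, $q=\mathrm D_xg$, bounding the maximum by $2(|\mathrm Df|^2+|\mathrm Dg|^2)$, and integrating. The only difference is cosmetic: you extend the Cheeger differential to $W^{1,2}(X)$ first and argue pointwise, while the paper integrates for Lipschitz $f,g$ and then approximates. In either version the approximation must use density of Lipschitz functions in the $W^{1,2}$ norm, as the PI theory provides. Mere energy convergence of $f_n$ and $g_n$ separately is not enough, because you also need $\Ch(f_n\pm g_n)\to\Ch(f\pm g)$.
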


\begin{proof}
	Take the essential supremum in \eqref{eq:fiber-pstar} and then the infimum
	over admissible errors.  For Lipschitz $f,g\in W^{1,2}(X)$, apply
	\eqref{eq:fiber-pstar} to $p=\mathrm D_xf$ and $q=\mathrm D_xg$.
	The denominator in the parallelogram defect is at most
	$2(|\mathrm Df|^2+|\mathrm Dg|^2)$.  Integration therefore gives
	\eqref{eq:cheeger-parallelogram-defect}.  Density of Lipschitz functions in
	the Sobolev norm extends the estimate to all $W^{1,2}$ functions.
\end{proof}

\begin{proof}[Proof of Theorem~\ref{thm:intro-almost}]
Part~\textup{(i)} is Proposition~\ref{prop:every-barycenter},
part~\textup{(ii)} is Theorem~\ref{thm:enb}, and part~\textup{(iii)} is
Theorem~\ref{thm:quantitative-hilbertianity}.
\end{proof}

\begin{proof}[Proof of Theorem~\ref{thm:main-rigidity}]
	The implication $\RCD(K,N)\Rightarrow\BCD(K,N)$ follows from
	\cite[Theorem~1.5]{HanLiuZhuBCD}.  Conversely,
	Lemma~\ref{lem:finite-to-infinite} gives $\BCD_0(K,\infty)$.
	Corollary~\ref{cor:finite-enb}, which follows from
	Theorem~\ref{thm:intro-almost}\textup{(ii)}, gives essential
	non-branching.
	Proposition~\ref{prop:finite-pi} then gives strong $\CDe(K,N)$ and the PI
	structure.  Corollary~\ref{cor:quantitative-cheeger}, with $\delta=0$,
	implies
	infinitesimal Hilbertianity.  Together, strong $\CDe(K,N)$ and
	infinitesimal Hilbertianity give $\RCD^*(K,N)$
	\cite[Definition~3.16 and Theorem~3.17]{EKS}, hence
	$\RCD(K,N)$ by \cite[Corollary~13.7]{CavallettiMilman} and
	\cite[Corollary~1.2]{LiGlobalization}.
\end{proof}

\section{Finsler models and sharp estimates}\label{sec:finsler}

\subsection{The log-determinant upper bound}

Let \(F\) be a Minkowski norm on \(\R^n\) and set
\(h(v):=F(v)^2/2\).
Throughout this section, assume
\begin{equation}\label{eq:ellipticity}
h\in C^3(\R^n\setminus\{0\}),
\qquad
aI\le g_v:=\mathrm D^2h(v)\le bI
\quad(v\ne0),
\end{equation}
for some \(0<a\le b<\infty\).  Since \(h\) is even and two-homogeneous,
\(g_{rv}=g_v\) for \(r>0\), and \(g_{-v}=g_v\).
These are the fundamental tensors of Minkowski geometry
\cite[Chapter~1]{BaoChernShen}.

The cost $h$ need not be twice differentiable at the origin.  It is therefore
first mollified without increasing the log-determinant bound.  The entropy
inequality for the smooth costs then passes to $h$ by compactness.  

\subsubsection{Log-determinant defect and smoothing}

Let \(\Sym_n^+\) denote the cone of positive-definite symmetric
$n\times n$ matrices, and set
\(\mathcal G_F:=\{g_v:|v|=1\}\subset\Sym_n^+\).
The Minkowski determinant inequality states that
$A\mapsto(\det A)^{1/n}$ is concave on $\Sym_n^+$; see, for instance, 
\cite[Section~4.2]{BhatiaPositive}.  Since the logarithm
is increasing and concave, $A\mapsto\log\det A$ is concave as well.  It is
strictly concave: if $A_t=A+tH$ remains positive definite, then
\[
 \frac{\mathrm d^2}{\mathrm d t^2}\log\det A_t
 =-\tr\!\left((A_t^{-1/2}HA_t^{-1/2})^2\right)<0
 \qquad(H\ne0).
\]
Define \(c_F(F)\) by \eqref{eq:intro-cf}.  Concavity of $\log\det$ and
\eqref{eq:ellipticity} give \(0\le c_F(F)\le n\log(b/a)\).

Since $\mathcal G_F$ is compact, finitely supported probability measures are
weakly dense in $\cP(\mathcal G_F)$.  Both terms in the definition of
$c_F(F)$ are weakly continuous.  Hence, for every
$\sigma\in\cP(\mathcal G_F)$,
\begin{equation}\label{eq:cf-prob}
\log\det\left(\int_{\mathcal G_F}A\d\sigma(A)\right)
-\int_{\mathcal G_F}\log\det A\d\sigma(A)
\le c_F(F).
\end{equation}
The supremum of the left-hand side over these probability measures is
$c_F(F)$.

Let \(\vartheta\in C_c^\infty(\R^n)\) be a nonnegative even mollifier with
unit integral and zero first moment.  Put
\[
\vartheta_\varepsilon(z)=\varepsilon^{-n}\vartheta(z/\varepsilon),
\qquad
\widetilde h_\varepsilon=h*\vartheta_\varepsilon,
\qquad
h_\varepsilon=\widetilde h_\varepsilon-\widetilde h_\varepsilon(0).
\]

\begin{lemma}[Uniform approximation]\label{lem:cost-approx}
For every \(\varepsilon>0\),
$aI\le \mathrm D^2h_\varepsilon\le bI$ on $\R^n$.  Moreover, there are
constants $C_0,C_1>0$, independent of \(\varepsilon\), such that
\begin{equation}\label{eq:cost-grad-approx}
\|h_\varepsilon-h\|_{L^\infty(\R^n)}\le C_0\varepsilon^2,
\qquad
\|\mathrm D h_\varepsilon-\mathrm D h\|_{L^\infty(\R^n)}\le C_1\varepsilon.
\end{equation}
\end{lemma}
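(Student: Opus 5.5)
We would prove the three assertions separately, using only the structure of $h$: it is convex, even, two-homogeneous and $C^{1,1}$ on $\R^n$, with $\mathrm D^2h=g_v$ bounded between $aI$ and $bI$ away from the origin.

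\textbf{Hessian bounds.} Since $h\in C^1(\R^n)$ with $\mathrm Dh$ Lipschitz (homogeneity of degree one for $\mathrm Dh$ plus the bound $g_v\le bI$ on the complement of the origin, which is connected when $n\ge2$), the distributional Hessian of $h$ is the bounded function $v\mapsto g_v$ (a null set at the origin does not matter). Convolution commutes with distributional derivatives, so $\mathrm D^2h_\varepsilon=\mathrm D^2\widetilde h_\varepsilon=(\mathrm D^2h)*\vartheta_\varepsilon$ pointwise. This is an average of matrices in $[aI,bI]$ against a probability density, hence lies in $[aI,bI]$. Subtracting the constant $\widetilde h_\varepsilon(0)$ does not affect second derivatives.

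\textbf{Gradient estimate.} Since $\mathrm Dh$ is Lipschitz with constant $b$,
\[
|\mathrm Dh_\varepsilon(v)-\mathrm Dh(v)|\le\int|\mathrm Dh(v-z)-\mathrm Dh(v)|\vartheta_\varepsilon(z)\d z\le b\varepsilon R_\vartheta,
\]
where $\operatorname{spt}\vartheta\subset B_{R_\vartheta}$. This gives $C_1=bR_\vartheta$.

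\textbf{Value estimate.} By Taylor's formula with integral remainder for the $C^{1,1}$ function $h$,
\[
h(v-z)=h(v)-\mathrm Dh(v)\cdot z+E(v,z),\qquad |E(v,z)|\le\tfrac b2|z|^2.
\]
Integrating against $\vartheta_\varepsilon$, the linear term vanishes by the zero first moment, so
\[
|\widetilde h_\varepsilon-h|\le\tfrac b2\varepsilon^2\int|z|^2\vartheta=:C'\varepsilon^2.
\]
Since $h(0)=0$, this also gives $|\widetilde h_\varepsilon(0)|\le C'\varepsilon^2$, hence $C_0=2C'$.

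The main point requiring care is justifying that $h$ is globally $C^{1,1}$ with distributional Hessian equal to $g_v$ a.e. There is no singular part at the origin, because $\mathrm Dh$ is continuous at $0$ (it tends to $0$ by homogeneity). For $n=1$ the same conclusion holds trivially, since $h=cv^2$.
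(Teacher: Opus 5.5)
Your proposal is correct and follows essentially the same route as the paper. The shared steps are: Lipschitz continuity of $\mathrm Dh$ (obtained by integrating the Hessian bound along segments and using continuity at the origin); the identity $\mathrm D^2h_\varepsilon=(\mathrm D^2h)*\vartheta_\varepsilon$, which is an average of matrices in $[aI,bI]$; a second-order Taylor bound integrated against the zero-first-moment mollifier and then subtracted at the origin for the values; and the $b$-Lipschitz bound for the gradients. The only cosmetic difference is that you bound the remainder by $|E|\le\frac b2|z|^2$, whereas the paper uses the two-sided inequality $\frac a2|z|^2\le E\le\frac b2|z|^2$. Your version gives a slightly larger constant $C_0$, which is still independent of $\varepsilon$.
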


\begin{proof}
Two-homogeneity gives $h(v)=O(|v|^2)$ and $\mathrm D h(v)=O(|v|)$
near the origin, so $h\in C^1(\R^n)$ and $\mathrm D h(0)=0$.
Integrating \eqref{eq:ellipticity} along line segments, split at the origin
when necessary, shows that $\mathrm D h$ is $b$-Lipschitz and gives
\begin{equation}\label{eq:strong-conv-h}
\frac a2|z|^2
\le h(x+z)-h(x)-\mathrm D h(x)\cdot z
\le\frac b2|z|^2
\qquad(x,z\in\R^n).
\end{equation}
Since $\mathrm D h$ is Lipschitz, weak differentiation gives
$\mathrm D^2h_\varepsilon=(\mathrm D^2h)*\vartheta_\varepsilon$,
with $\mathrm D^2h$ interpreted almost everywhere.
The mollifier is nonnegative and has unit integral, so convolution preserves
the bounds in \eqref{eq:ellipticity}.  To prove
the first estimate in \eqref{eq:cost-grad-approx}, apply
\eqref{eq:strong-conv-h} with increment \(-u\), integrate against
\(\vartheta_\varepsilon(u)\), and subtract the corresponding estimate at the
origin.  The second estimate follows from the global \(b\)-Lipschitz
continuity of \(\mathrm D h\).
\end{proof}

\begin{lemma}[Stability under smoothing]	\label{lem:defect-smoothing}
For every finite family of positive weights \((\lambda_i)\) with
$\sum_i\lambda_i=1$ and every family \((z_i)\subset\R^n\) indexed by the
same set,
\begin{equation}\label{eq:smoothed-defect}
\log\det\left(\sum_i\lambda_i\mathrm D^2h_\varepsilon(z_i)\right)
-\sum_i\lambda_i\log\det \mathrm D^2h_\varepsilon(z_i)
\le c_F(F).
\end{equation}
\end{lemma}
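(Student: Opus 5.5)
The key observation is that each smoothed Hessian is an average of fundamental tensors. By the proof of Lemma~\ref{lem:cost-approx}, $\mathrm D^2h_\varepsilon=(\mathrm D^2h)*\vartheta_\varepsilon$, where $\mathrm D^2h$ is taken almost everywhere. The origin is Lebesgue-null, so for each $z_i$ we can write
\[
\mathrm D^2h_\varepsilon(z_i)=\int_{\R^n}g_{z_i-u}\,\vartheta_\varepsilon(u)\d u .
\]
The integrand is defined for almost every $u$. Since $g_{rv}=g_v$ for $r>0$, we have $g_{z_i-u}=g_{(z_i-u)/|z_i-u|}\in\mathcal G_F$. Hence $\mathrm D^2h_\varepsilon(z_i)=\int_{\mathcal G_F}A\d\sigma_i(A)$, where $\sigma_i\in\cP(\mathcal G_F)$ is the push-forward of $\vartheta_\varepsilon(u)\d u$ under the Borel map $u\mapsto g_{(z_i-u)/|z_i-u|}$. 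This map is continuous off $\{z_i\}$, because $h\in C^3$ away from the origin. Set $\sigma:=\sum_i\lambda_i\sigma_i\in\cP(\mathcal G_F)$. Then
\[
\sum_i\lambda_i\mathrm D^2h_\varepsilon(z_i)=\int_{\mathcal G_F}A\d\sigma(A).
\]

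Next we compare the two log-determinant terms. Concavity of $\log\det$ on $\Sym_n^+$ gives, via Jensen's inequality for each $i$,
\[
\log\det\mathrm D^2h_\varepsilon(z_i)\ge\int_{\mathcal G_F}\log\det A\d\sigma_i(A).
\]
Jensen applies because $\mathcal G_F$ is compact, $\log\det$ is continuous and bounded there by \eqref{eq:ellipticity}, and $\mathcal G_F\subset\Sym_n^+$ with the barycenter also in $\Sym_n^+$. Multiplying by $\lambda_i$ and summing yields
\[
\sum_i\lambda_i\log\det\mathrm D^2h_\varepsilon(z_i)\ge\int\log\det A\d\sigma(A).
\]

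Combining the two steps, the left-hand side of \eqref{eq:smoothed-defect} is at most
\[
\log\det\Bigl(\int A\d\sigma\Bigr)-\int\log\det A\d\sigma,
\]
and this quantity is at most $c_F(F)$ by \eqref{eq:cf-prob}.

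The only delicate point is justifying the integral representation of the smoothed Hessian, namely identifying $\mathrm D^2h_\varepsilon$ pointwise with the mollified almost-everywhere Hessian. This holds because $\mathrm Dh$ is Lipschitz, so its distributional derivative is the bounded almost-everywhere Hessian, and its convolution with a smooth kernel is continuous and given pointwise by the integral. Measurability of the map $u\mapsto g_{(z_i-u)/|z_i-u|}$ is routine.
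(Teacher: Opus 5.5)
Your proposal is correct and follows the paper's proof essentially step for step. You write each $\mathrm D^2h_\varepsilon(z_i)$ as the barycenter of a probability measure $\sigma_i$ on $\mathcal G_F$, use Jensen's inequality for the concave function $\log\det$ to get $\sum_i\lambda_i\log\det\mathrm D^2h_\varepsilon(z_i)\ge\int\log\det A\d\sigma(A)$ with $\sigma=\sum_i\lambda_i\sigma_i$, and conclude with \eqref{eq:cf-prob}; your remarks on the Lebesgue-null origin, boundedness of $\log\det$ on $\mathcal G_F$, and measurability only make explicit details the paper leaves implicit.
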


\begin{proof}
The value of \(\mathrm D^2h\) at the origin is irrelevant for convolution
and may be chosen in \(\mathcal G_F\).  For each \(z\), the matrix
\(\mathrm D^2h_\varepsilon(z)
=\int \mathrm D^2h(z-u)\vartheta_\varepsilon(u)\d u\)
is the barycenter of a probability measure \(\sigma_{\varepsilon,z}\)
supported on \(\mathcal G_F\).  Let
\(G_i=\mathrm D^2h_\varepsilon(z_i)\) and
\(\sigma=\sum_i\lambda_i\sigma_{\varepsilon,z_i}\).  Concavity of
\(\log\det\) gives
\(\log\det G_i\ge\int\log\det A\d\sigma_{\varepsilon,z_i}(A)\).
Thus the left-hand side of \eqref{eq:smoothed-defect} is no larger than the
left-hand side of \eqref{eq:cf-prob} for \(\sigma\).
\end{proof}

\subsubsection{Jacobian identities and the entropy inequality}

\paragraph{Point barycenters and stability.}

For an even, \(a\)-strongly convex function \(H\in C^1(\R^n)\),
the function \(z\mapsto\sum_i\lambda_iH(z-x_i)\) is coercive and
\(a\)-strongly convex.  It therefore has a unique minimizer, denoted by
\(b_H(x_1,\ldots,x_k)\).

\begin{lemma}[Stability of point barycenters]\label{lem:pointbar-stability}
If $H$ is even and \(aI\le \mathrm D^2H\le bI\), then, for
$x=(x_i)$ and $y=(y_i)$,
\begin{equation}\label{eq:pointbar-lip}
|b_H(x)-b_H(y)|
\le\frac ba\left(\sum_i\lambda_i|x_i-y_i|^2\right)^{1/2}.
\end{equation}
If \(H_1,H_2\) are even and \(a\)-strongly convex and
\(\|\mathrm D H_1-\mathrm D H_2\|_\infty\le\eta\), then
\begin{equation}\label{eq:pointbar-cost}
\|b_{H_1}-b_{H_2}\|_\infty\le\frac\eta a.
\end{equation}
In particular, \(\|b_{h_\varepsilon}-b_h\|_\infty\le C\varepsilon\).
\end{lemma}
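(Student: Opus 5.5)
The plan is to work with the first-order optimality condition of the strongly convex point-barycenter problem. For $H$ even, $C^1$ and $a$-strongly convex, the minimizer $z=b_H(x)$ of $\Phi_x(z)=\sum_i\lambda_iH(z-x_i)$ is characterized by
\[
\sum_i\lambda_i\mathrm DH(z-x_i)=0 .
\]
Strong convexity of $\Phi_x$ gives a coercivity bound at any point $w$:
\[
\bigl(\mathrm D\Phi_x(w)-\mathrm D\Phi_x(b_H(x))\bigr)\cdot\bigl(w-b_H(x)\bigr)\ge a|w-b_H(x)|^2 .
\]
Since $\mathrm D\Phi_x(b_H(x))=0$, this yields
\[
|w-b_H(x)|\le a^{-1}|\mathrm D\Phi_x(w)|.
\]
Both estimates then follow by choosing $w$ appropriately and bounding $|\mathrm D\Phi_x(w)|$.

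For \eqref{eq:pointbar-lip}, set $w=b_H(y)$. Then $\mathrm D\Phi_y(w)=0$, so
\[
\mathrm D\Phi_x(w)=\sum_i\lambda_i\bigl(\mathrm DH(w-x_i)-\mathrm DH(w-y_i)\bigr).
\]
The upper bound $\mathrm D^2H\le bI$ makes $\mathrm DH$ globally $b$-Lipschitz. This follows by integrating along segments as in Lemma~\ref{lem:cost-approx}; if $\mathrm D^2H$ exists only almost everywhere, we use the Lipschitz property directly. Hence
\[
|\mathrm D\Phi_x(w)|\le b\sum_i\lambda_i|x_i-y_i|\le b\Bigl(\sum_i\lambda_i|x_i-y_i|^2\Bigr)^{1/2},
\]
where the last step is Cauchy--Schwarz (Jensen) with weights summing to one. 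Dividing by $a$ gives \eqref{eq:pointbar-lip}.

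For \eqref{eq:pointbar-cost}, fix $x$, let $\Phi^{(j)}$ be the functional built from $H_j$, and take $w=b_{H_2}(x)$. Then
\[
\mathrm D\Phi^{(1)}(w)=\mathrm D\Phi^{(1)}(w)-\mathrm D\Phi^{(2)}(w)=\sum_i\lambda_i\bigl(\mathrm DH_1-\mathrm DH_2\bigr)(w-x_i),
\]
whose norm is at most $\eta$. Applying the coercivity bound for $\Phi^{(1)}$, which is $a$-strongly convex, gives $|b_{H_2}(x)-b_{H_1}(x)|\le\eta/a$. This holds uniformly in $x$.

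The final claim is obtained by taking $H_1=h_\varepsilon$ and $H_2=h$. Both are even, since $h$ is even and the mollifier $\vartheta$ is even, and both are $a$-strongly convex, by Lemma~\ref{lem:cost-approx} and \eqref{eq:strong-conv-h}. Lemma~\ref{lem:cost-approx} also gives $\eta=C_1\varepsilon$, so $C=C_1/a$.

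The only delicate point is regularity. The hypothesis $\mathrm D^2H$ need not hold classically at $0$ for $H=h$. So I would phrase strong convexity as monotonicity of the gradient,
\[
(\mathrm DH(p)-\mathrm DH(q))\cdot(p-q)\ge a|p-q|^2,
\]
and use the $b$-Lipschitz property of $\mathrm DH$. Both follow from \eqref{eq:strong-conv-h}-type integration along segments split at the origin, which avoids any need for twice differentiability. Evenness is not actually needed beyond guaranteeing these properties symmetrically.
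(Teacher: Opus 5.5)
Your proposal is correct and follows essentially the same route as the paper: it combines strong monotonicity of $\mathrm D\Phi_x$ with the first-order optimality conditions. It then uses the $b$-Lipschitz bound on $\mathrm DH$ plus Cauchy--Schwarz for \eqref{eq:pointbar-lip}, the bound $\eta$ on $\mathrm DH_1-\mathrm DH_2$ for \eqref{eq:pointbar-cost}, and Lemma~\ref{lem:cost-approx} for the final claim. Your explicit check that $h_\varepsilon$ is even and $a$-strongly convex, giving $C=C_1/a$, is a detail the paper leaves implicit.
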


\begin{proof}
Let \(\Phi_x(z)=\sum_i\lambda_iH(z-x_i)\), and set \(p=b_H(x)\) and
\(q=b_H(y)\).  The first-order optimality conditions are
$\mathrm D\Phi_x(p)=\mathrm D\Phi_y(q)=0$.
Adding the strong-convexity inequalities for $\Phi_x$ at $p$ and $q$ gives
$a|p-q|^2\le
(\mathrm D\Phi_x(q)-\mathrm D\Phi_x(p))\cdot(q-p)$.
By Cauchy--Schwarz and the $b$-Lipschitz continuity of $\mathrm D H$,
\[
\begin{aligned}
a|p-q|
&\le |\mathrm D\Phi_x(q)-\mathrm D\Phi_y(q)|\\
&\le b\sum_i\lambda_i|x_i-y_i|
\le b\left(\sum_i\lambda_i|x_i-y_i|^2\right)^{1/2}.
\end{aligned}
\]
This proves \eqref{eq:pointbar-lip}.

For \eqref{eq:pointbar-cost}, fix $x=(x_i)$ and put
$p=b_{H_1}(x)$ and $q=b_{H_2}(x)$.
Apply the same strong-convexity inequality to
$z\mapsto\sum_i\lambda_iH_1(z-x_i)$.
The optimality conditions give
$\sum_i\lambda_i\mathrm D H_1(p-x_i)
=\sum_i\lambda_i\mathrm D H_2(q-x_i)=0$, so
\[
a|p-q|
\le\left|\sum_i\lambda_i
\bigl(\mathrm D H_1(q-x_i)-\mathrm D H_2(q-x_i)\bigr)\right|
\le\eta.
\]
Taking the supremum over $x$ proves \eqref{eq:pointbar-cost}.
The last assertion follows from Lemma~\ref{lem:cost-approx}.
\end{proof}

\paragraph{Differential and Jacobian identities.}

For a convex function \(H:\R^n\to\R\), consider the transport cost
\(c(x,y)=H(x-y)\), and write
\(\mathcal T_H(\mu,\nu)
:=\inf_{\pi\in\Pi(\mu,\nu)}\int H(x-y)\d\pi(x,y)\).
Given weights $(\lambda_i)$, an $H$-Wasserstein barycenter of $(\mu_i)$ is a
minimizer of $\nu\mapsto\sum_i\lambda_i\mathcal T_H(\nu,\mu_i)$.  For even,
strongly convex $H$, the corresponding point barycenter map is $b_H$.

For the quadratic cost, the barycenter balance relation appears in
Agueh--Carlier \cite[Proposition~3.8 and Remark~3.9]{AguehCarlier}.
On Riemannian manifolds, Cordero-Erausquin--McCann--Schmuckenschl\"ager
established the differential and Jacobian formulas for optimal maps
\cite[Proposition~4.1 and Theorem~4.2]{CorderoErausquinMcCannSchmuckenschlager}.
Kim--Pass proved first- and second-order balance relations for
Wasserstein barycenters, with an inequality at second order
\cite[Theorem~4.4]{KimPass}.  Ma proved the corresponding Hessian
equality for barycenters of finitely many measures
\cite[Section~3]{MaWassersteinBarycenter}.
The following lemma gives the corresponding formulas for transport
costs of the form $H(x-y)$, which was studied
in \cite{BrizziFrieseckeRied}.

\begin{lemma}[Differential and Jacobian identities]\label{lem:jacobian-identities}
Let \(H\in C^2(\R^n)\) be even and satisfy
\(aI\le \mathrm D^2H\le bI\).  Let
\(\mu_i=\rho_i\cL^n\) be compactly supported probability measures with finite
entropy, and let \(\bar\mu=\bar\rho\cL^n\) minimize
$\nu\mapsto\sum_i\lambda_i\mathcal T_H(\nu,\mu_i)$.  The optimal couplings
from \(\bar\mu\) to \(\mu_i\) are induced by maps \(T_i\), and each $T_i$
is approximately differentiable at $\cL^n$-almost every point.  For
\(\bar\mu\)-almost every $x$, set $v_i=x-T_i(x)$,
$G_i=\mathrm D^2H(v_i)$, and $G=\sum_i\lambda_iG_i$.  Then the following
statements hold; the last three hold for every $i$:
\begin{equation}\label{eq:jacobian-system}
\begin{gathered}
\sum_i\lambda_i\mathrm D H(v_i)=0,
\qquad
\sum_i\lambda_iG_i\mathrm D T_i=G,\\
G_i\mathrm D T_i\in\Sym_n^+,
\qquad
\det\mathrm D T_i>0,\\
\bar\rho(x)=\rho_i(T_i(x))\det \mathrm D T_i(x).
\end{gathered}
\end{equation}
\end{lemma}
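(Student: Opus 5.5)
The plan is to follow the Agueh--Carlier multi-marginal approach, adapted to the cost $H(x-y)$ as in Brizzi--Friesecke--Ried. First, by Gangbo--McCann theory for strictly convex costs $c(x,y)=H(x-y)$ with $H\in C^2$ and uniformly convex, each optimal coupling from the absolutely continuous $\bar\mu$ to $\mu_i$ is induced by a map.

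\textbf{Step 1: existence of maps and the form of $T_i$.} Kantorovich duality gives an $H$-concave potential $\varphi_i$ (with $\varphi_i(x)=\inf_y\{H(x-y)-\psi_i(y)\}$, Lipschitz on compacts and semiconcave because $\mathrm D^2H\le bI$). This gives
\[
T_i(x)=x-(\mathrm DH)^{-1}(\mathrm D\varphi_i(x)).
\]
Semiconcavity and Alexandrov's theorem give approximate differentiability $\cL^n$-a.e. Absolute continuity of $\bar\mu$ must be known first. I would obtain it from the multi-marginal reformulation: the barycenter is $(b_H)_\#\gamma$ for an optimal multi-marginal plan $\gamma$. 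Using Lemma~\ref{lem:pointbar-stability} together with one marginal being absolutely continuous, the standard Agueh--Carlier argument shows that $\bar\mu\ll\cL^n$.

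\textbf{Step 2: first-order balance.} The dual of the barycenter problem gives $\sum_i\lambda_i\varphi_i\le 0$, with equality on $\supp\bar\mu$. Alternatively, $x=b_H(T_1(x),\dots,T_k(x))$ holds $\bar\mu$-a.e. since $\gamma$ is induced by $(T_i)$. Either route yields $\sum_i\lambda_i\mathrm D\varphi_i=0$ at points of differentiability, that is, $\sum_i\lambda_i\mathrm DH(v_i)=0$.

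\textbf{Step 3: second-order relations.} Differentiate $\mathrm DH(x-T_i(x))=\mathrm D\varphi_i(x)$ at an Alexandrov point. This gives
\[
G_i(I-\mathrm DT_i)=\mathrm D^2\varphi_i.
\]
Since $\varphi_i$ is $H$-concave, $H(\cdot-T_i(x))-\varphi_i$ attains its minimum at $x$. Hence $G_i-\mathrm D^2\varphi_i\ge0$, i.e.\ $G_i\mathrm DT_i$ is symmetric nonnegative.

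Summing with weights, the function $\sum_i\lambda_i\varphi_i\le0$ attains its maximum $0$ on the support. Combined with the first-order identity, this should force $\sum_i\lambda_i\mathrm D^2\varphi_i=0$ a.e. (Ma's equality) at density points of $\supp\bar\mu$. The route there is: the approximate second derivative vanishes where a semiconcave function touches zero from below on a set of density one. This gives $\sum_i\lambda_iG_i\mathrm DT_i=G$.

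\textbf{Step 4: Monge--Amp\`ere and positivity.} The change of variables formula for approximately differentiable injective maps (McCann's Jacobian theorem) gives
\[
\bar\rho=\rho_i(T_i)\,|\det\mathrm DT_i|\quad\bar\mu\text{-a.e.}
\]
Strict positivity of $G_i\mathrm DT_i$ follows once $\det\mathrm DT_i\ne0$, which holds $\bar\mu$-a.e. because $\rho_i(T_i)$ is finite and $\bar\rho>0$. Then $\det\mathrm DT_i=\det(G_i\mathrm DT_i)/\det G_i>0$.

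The main obstacle is Step 3's equality $\sum_i\lambda_i\mathrm D^2\varphi_i=0$, as opposed to the inequality. I would first try to derive it from Lipschitz regularity of the inverse map $x\mapsto T_i(x)$ via \eqref{eq:pointbar-lip}. Along $\gamma$, $x=b_H(T_1x,\dots,T_kx)$ expresses $T_j$ through $T_1$ in a Lipschitz way, so the second-order Taylor expansions combine exactly.
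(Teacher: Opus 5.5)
Your main line is the paper's proof. The ingredients match step for step:
\begin{itemize}
\item Gangbo--McCann optimal maps, and locally semiconcave $H$-concave potentials with Alexandrov's theorem.
\item The formula $T_i=x-(\mathrm DH)^{-1}(\mathrm D\varphi_i)$ and the chain rule $G_i(I-\mathrm DT_i)=\mathrm D^2\varphi_i$.
\item The minimum of $H(\cdot-T_i(x))-\varphi_i$ at $x$, giving $G_i\mathrm DT_i\ge0$.
\item The balance $x=b_H(T_1(x),\dots,T_k(x))$, giving the first-order identity.
\item A density-point argument for the Hessian identity.
\item The Jacobian equation together with $\bar\rho>0$, giving $\det\mathrm DT_i>0$.
\end{itemize}
For the Jacobian equation you should also say why $T_i$ is injective $\bar\mu$-a.e. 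The paper obtains this from the Gangbo--McCann map in the reverse direction, which exists because $\mu_i\ll\cL^n$.

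Two corrections. First, in Step~3 the density-point argument you sketch is the right one, but your sign is reversed. With your convention $\varphi_i=\inf_y\{H(\cdot-y)-\psi_i(y)\}$, potentials taken jointly from the barycenter dual satisfy $\sum_i\lambda_i\varphi_i\ge0$, with equality on $\supp\bar\mu$: a minimum there, not a maximum. By itself this only gives the one-sided bound $\sum_i\lambda_i\mathrm D^2\varphi_i\ge0$. Equality comes from vanishing on a set of density one. That vanishing needs potentials chosen jointly from the barycenter dual rather than pair by pair. The paper avoids this by running the density argument on the vector field $\sum_i\lambda_i\mathrm D\varphi_i$ instead. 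That field vanishes a.e.\ on $\{\bar\rho>0\}$ for any pairwise optimal potentials, and its approximate differential at Alexandrov points is $\sum_i\lambda_i\mathrm D^2\varphi_i$.

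Second, the fallback you say you would try first does not work. The estimate \eqref{eq:pointbar-lip} bounds $|b_H(x)-b_H(y)|$ by the displacements of the arguments. Along the plan, it therefore controls $x$ by $(T_1(x),\dots,T_k(x))$, not $T_j$ by $x$ or by $T_1$. It gives no Lipschitz or second-order information about the maps.

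Your absolute-continuity argument fails for the same reason: a push-forward of $\gamma$ under a Lipschitz map can be singular. The Agueh--Carlier argument instead needs a Lipschitz bound on the map from the barycenter to one marginal. This does not affect the lemma, since absolute continuity of $\bar\mu$ is part of its hypothesis; the paper cites Brizzi--Friesecke--Ried, Theorem~5.7, for it.
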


\begin{proof}
The barycenter is absolutely continuous by
\cite[Theorem~5.7]{BrizziFrieseckeRied}.
By \cite[Theorem~4.5]{GangboMcCann}, there is a unique optimal map $T_i$
from $\bar\mu$ to each $\mu_i$.  Applying the same theorem in the reverse
direction gives an almost-everywhere inverse, so $T_i$ is injective
$\bar\mu$-almost everywhere.

Choose a $c$-concave Kantorovich potential $\phi_i$ for $T_i$.
The $c$-transform expresses $\phi_i$ as an infimum of functions whose
Hessians are bounded above by $bI$, so $\phi_i$ is locally
$b$-semiconcave.  Alexandrov's theorem
\cite[Theorem~14.25]{Villani2009} gives a second-order expansion and
approximate differentiability of $\mathrm D\phi_i$ almost everywhere.
The formula $T_i=x-(\mathrm D H)^{-1}(\mathrm D\phi_i)$ then gives
approximate differentiability of $T_i$, since $(\mathrm D H)^{-1}$ is
$C^1$.  The chain rule yields
\begin{equation}\label{eq:potential-map-hessian}
 \mathrm D\phi_i=\mathrm D H(v_i),\qquad
 \mathrm D^2\phi_i=G_i(I-\mathrm D T_i)
 \quad\bar\mu\text{-a.e.}
\end{equation}
The defining inequality for $\phi_i$ implies that
$z\mapsto H(z-T_i(x))-\phi_i(z)$ has a minimum at $x$ for
$\bar\mu$-almost every $x$.  Thus $\mathrm D^2\phi_i\le G_i$, and
$G_i\mathrm D T_i=G_i-\mathrm D^2\phi_i$ is symmetric and positive
semidefinite.

The multi-marginal formulation
\cite[Proposition~2.5]{BrizziFrieseckeRied} shows that $x$ minimizes
$z\mapsto\sum_i\lambda_iH(z-T_i(x))$ for $\bar\mu$-almost every $x$.
The first-order optimality condition gives
$\sum_i\lambda_i\mathrm D H(v_i)=0$.
Hence $\sum_i\lambda_i\mathrm D\phi_i=0$ Lebesgue almost everywhere
on $E=\{\bar\rho>0\}$.
At almost every density point of $E$, the approximate differential of this
sum is zero. As in the Hessian equality argument of
\cite[Section~3]{MaWassersteinBarycenter},
it follows that $\sum_i\lambda_i\mathrm D^2\phi_i=0$ on $E$ almost
everywhere.  Equation~\eqref{eq:potential-map-hessian} now gives
$\sum_i\lambda_iG_i\mathrm D T_i=G$.

The maps $T_i$ are approximately differentiable and injective
$\bar\mu$-almost everywhere.  Both $\bar\mu$ and
$\mu_i=(T_i)_\#\bar\mu$ are absolutely continuous.  The Jacobian equation in
\cite[Theorem~11.1]{Villani2009} (see also
\cite[3.1.8 and 3.2.3]{Federer}) therefore gives
$\bar\rho=(\rho_i\circ T_i)|\det\mathrm D T_i|$ $\bar\mu$-almost everywhere.
The determinant is nonnegative
because $G_i\mathrm D T_i$ is positive semidefinite and $G_i>0$.
Since $\bar\rho>0$ on $E$ and $\rho_i\circ T_i$ is finite
$\bar\mu$-almost everywhere, the identity gives $\det\mathrm D T_i>0$.
Thus $G_i\mathrm D T_i$ is positive definite.
\end{proof}

\begin{proposition}[Entropy inequality for a smooth cost]\label{prop:smooth-entropy}
Under the hypotheses of Lemma~\ref{lem:jacobian-identities}, suppose in
addition that
\begin{equation}\label{eq:H-defect-bound}
\log\det\left(\sum_i\lambda_i\mathrm D^2H(z_i)\right)
-\sum_i\lambda_i\log\det \mathrm D^2H(z_i)
\le c,
\end{equation}
for every finite family \((z_i)\).  Then
\begin{equation}\label{eq:smooth-entropy-bound}
\Ent_{\cL^n}(\bar\mu)
\le\sum_i\lambda_i\Ent_{\cL^n}(\mu_i)+c.
\end{equation}
\end{proposition}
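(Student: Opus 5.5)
We work pointwise with the Jacobian system \eqref{eq:jacobian-system} of Lemma~\ref{lem:jacobian-identities} and then integrate against $\bar\mu$. The target is the pointwise bound
\[
 -\log\bar\rho(x)\;\ge\;\sum_i\lambda_i\bigl(-\log\rho_i(T_i(x))\bigr)-c
 \qquad\text{for }\bar\mu\text{-a.e. }x.
\]
Integrating it against $\bar\mu$ gives \eqref{eq:smooth-entropy-bound}, because $\int\log\rho_i(T_i)\d\bar\mu=\Ent_{\cL^n}(\mu_i)$ (using $(T_i)_\#\bar\mu=\mu_i$).

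By the Jacobian identity $\bar\rho=(\rho_i\circ T_i)\det\mathrm D T_i$, the pointwise bound is equivalent to
\[
 \sum_i\lambda_i\log\det\mathrm D T_i\le c.
\]
Put $S_i:=G_i\mathrm D T_i\in\Sym_n^+$, so that $\sum_i\lambda_iS_i=G$. Then
\[
 \log\det\mathrm D T_i=\log\det S_i-\log\det G_i .
\]
By concavity of $\log\det$ on $\Sym_n^+$,
\[
 \sum_i\lambda_i\log\det S_i\le\log\det\Bigl(\sum_i\lambda_iS_i\Bigr)=\log\det G .
\]
Hence
\[
 \sum_i\lambda_i\log\det\mathrm D T_i\le\log\det\Bigl(\sum_i\lambda_iG_i\Bigr)-\sum_i\lambda_i\log\det G_i\le c,
\]
where the last step is \eqref{eq:H-defect-bound} applied with $z_i=v_i$. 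This is exactly the symmetric-matrix form of the Agueh--Carlier Jensen argument. Positive definiteness of $S_i$, given by Lemma~\ref{lem:jacobian-identities}, is what makes the logarithms well defined and the concavity applicable.

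The main obstacle is justifying the integration. We must check that $\Ent_{\cL^n}(\bar\mu)$ is well defined and that the integrals $\int\log\rho_i(T_i)\d\bar\mu$ and $\int\log\bar\rho\d\bar\mu$ can be split and recombined without producing an $\infty-\infty$ expression. The plan is as follows.
\begin{itemize}
\item Since each $\mu_i$ has finite entropy, $\int\log\rho_i\circ T_i\d\bar\mu=\Ent(\mu_i)$ is finite.
\item The pointwise inequality then gives $\log\bar\rho\le\sum_i\lambda_i\log\rho_i\circ T_i+c$ $\bar\mu$-a.e. The right-hand side is integrable, so the positive part of $\log\bar\rho$ is $\bar\mu$-integrable.
\item $\bar\mu$ is compactly supported: its support lies in a bounded region, via the point-barycenter map $b_H$ and the compact supports of the $\mu_i$. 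Jensen's inequality therefore bounds $\Ent(\bar\mu)$ from below, and the entropy is finite.
\end{itemize}
Integrating the pointwise inequality then yields \eqref{eq:smooth-entropy-bound}.
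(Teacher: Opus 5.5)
Your proof is correct and is essentially the paper's argument. The paper normalizes $A_i=G^{-1/2}(G_i\mathrm D T_i)G^{-1/2}$, so that $\sum_i\lambda_iA_i=I$, and combines concavity of $A\mapsto(\det A)^{1/n}$ with weighted AM--GM; this is the same as your direct use of concavity of $\log\det$ on $S_i=G_i\mathrm D T_i$, followed by the Jacobian identity, with the integration handled the same way through $(T_i)_\#\bar\mu=\mu_i$ (your compact-support step is harmless but unnecessary, since $\Ent_{\cL^n}(\bar\mu)=-\infty$ would satisfy the inequality trivially).
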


\begin{proof}
At a point where the conclusions of
Lemma~\ref{lem:jacobian-identities} hold, define
\(A_i=G^{-1/2}(G_i\mathrm D T_i)G^{-1/2}\).
Then \(A_i>0\) and \(\sum_i\lambda_iA_i=I\).   By the concavity of \(A\mapsto\det(A)^{1/n}\), we obtain
\[
1\ge\sum_i\lambda_i
\left(\frac{\det G_i}{\det G}\det \mathrm D T_i\right)^{1/n}.
\]
By the last equality in \eqref{eq:jacobian-system} and the weighted AM--GM
inequality, taking logarithms yields
\[
\log\bar\rho(x)
\le\sum_i\lambda_i\log\rho_i(T_i(x))
+\log\det G-\sum_i\lambda_i\log\det G_i.
\]
The difference
$\log\det G-\sum_i\lambda_i\log\det G_i$ is at most \(c\) by
\eqref{eq:H-defect-bound}.
Since $(T_i)_\#\bar\mu=\mu_i$ and the marginals have finite entropy,
the right-hand side is integrable with respect to $\bar\mu$.
Integrating the pointwise inequality therefore gives 
\eqref{eq:smooth-entropy-bound}.
\end{proof}

\subsubsection{Passage to the Minkowski cost}

Proposition~\ref{prop:smooth-entropy}, applied to
$H=h_\varepsilon$, together with Lemma~\ref{lem:defect-smoothing},
gives, for compactly supported marginals of finite entropy,
\begin{equation}\label{eq:eps-compact-entropy}
\Ent_{\mathcal L^n}(\bar\mu_\varepsilon)
\le
\sum_i\lambda_i\Ent_{\mathcal L^n}(\mu_i)+c_F(F).
\end{equation}
The same conclusion holds for arbitrary
$\mu_i\in D(\Ent_{\mathcal L^n})$.
Indeed, let
\[
\mu_i^R
:=
\frac{\mu_i|_{B_R(0)}}{\mu_i(B_R(0))}.
\]
Then
\[
W_2(\mu_i^R,\mu_i)\to0,
\qquad
\Ent_{\mathcal L^n}(\mu_i^R)
\to
\Ent_{\mathcal L^n}(\mu_i).
\]
By the standard stability of the multi-marginal optimal transport
problem for continuous costs with quadratic growth, any sequence of
optimal plans for $\sum \delta_{\mu_i^R}$ admits, along a subsequence, a
limit which is optimal for $\sum \delta_{\mu_i}$. Since $b_{h_\varepsilon}$
is Lipschitz by Lemma~\ref{lem:pointbar-stability},
the corresponding Wasserstein barycenters converge in $W_2$.
Lower semicontinuity of entropy therefore extends \eqref{eq:eps-compact-entropy} to arbitrary
finite-entropy marginals.

\begin{theorem}[Log-determinant upper bound]\label{thm:minkowski-upper}
If \(F\) satisfies \eqref{eq:ellipticity}, then
\[
(\R^n,F,\cL^n)\text{ satisfies }\BCD_{c_F(F)}(0,\infty).
\]
\end{theorem}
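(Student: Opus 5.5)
The plan is to pass to the limit $\varepsilon\downarrow0$ in \eqref{eq:eps-compact-entropy}, which is already available for arbitrary finite-entropy marginals. Fix $\Omega=\sum_i\lambda_i\delta_{\mu_i}$ with $\mu_i\in\mathrm D(\Ent_{\cL^n})$. The Wasserstein space over $(\R^n,F)$ has squared distance $W_2^2=2\mathcal T_h$, since $F^2=2h$. Hence $\Bary(\Omega)$ for the metric $F$ is exactly the set of $h$-Wasserstein barycenters, and $\Var(\Omega)$ plays no role because $K=0$. So it suffices to produce one $h$-barycenter $\bar\mu$ with $\Ent_{\cL^n}(\bar\mu)\le\sum_i\lambda_i\Ent_{\cL^n}(\mu_i)+c_F(F)$.

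For each $\varepsilon>0$ let $\bar\mu_\varepsilon$ be the $h_\varepsilon$-barycenter from \eqref{eq:eps-compact-entropy}. I would use the multi-marginal representation: $\bar\mu_\varepsilon=(b_{h_\varepsilon})_\#\gamma_\varepsilon$, where $\gamma_\varepsilon\in\Pi(\mu_1,\dots,\mu_k)$ minimizes $\int\sum_i\lambda_ih_\varepsilon(b_{h_\varepsilon}(x)-x_i)\,\mathrm d\gamma$. This is the same \cite[Proposition~2.5]{BrizziFrieseckeRied} structure used in Lemma~\ref{lem:jacobian-identities}. The set of multi-couplings is tight with uniformly integrable second moments, so along a subsequence $\gamma_\varepsilon\to\gamma$ in $W_2$. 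By Lemma~\ref{lem:cost-approx}, $h_\varepsilon\to h$ uniformly, and the multi-marginal costs converge uniformly on $\R^{nk}$ after subtracting quadratic growth. Stability of optimal plans then shows that $\gamma$ is optimal for the cost with $h$. By Lemma~\ref{lem:pointbar-stability}, $\|b_{h_\varepsilon}-b_h\|_\infty\le C\varepsilon$ and $b_h$ is Lipschitz. It follows that $\bar\mu_\varepsilon\to\bar\mu:=(b_h)_\#\gamma$ in $W_2$.

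Next I would check that $\bar\mu$ is an $h$-barycenter. For any competitor $\nu$, compare $\sum\lambda_i\mathcal T_{h_\varepsilon}(\bar\mu_\varepsilon,\mu_i)\le\sum\lambda_i\mathcal T_{h_\varepsilon}(\nu,\mu_i)$. The uniform bound $\|h_\varepsilon-h\|_\infty\le C_0\varepsilon^2$ gives $|\mathcal T_{h_\varepsilon}-\mathcal T_h|\le C_0\varepsilon^2$. Continuity of $\mathcal T_h$ under $W_2$-convergence then lets us pass to the limit. Finally, lower semicontinuity of $\Ent_{\cL^n}$ under $W_2$-convergence gives the desired inequality with error $c_F(F)$. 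Lebesgue measure has Gaussian integrability, as in Lemma~\ref{lem:entropy-regularity} via \eqref{eq:weighted-entropy-identity}.

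The step I expect to be most delicate is the identification of the limit as an $h$-barycenter. This rests on the equivalence between barycenters and the multi-marginal problem for the nonsmooth cost $h$, and on $W_2$-continuity of $\mathcal T_h$, which is comparable to $|\cdot|^2$ by \eqref{eq:ellipticity}. I would handle it directly via the competitor comparison above, avoiding any structure theory for the limit cost.
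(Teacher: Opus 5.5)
Your proposal is correct and follows essentially the same route as the paper. You write $\bar\mu_\varepsilon=(b_{h_\varepsilon})_\#\gamma_\varepsilon$, extract a $W_2$-convergent subsequence of multi-couplings with fixed marginals, and use Lemma~\ref{lem:pointbar-stability} to get $\bar\mu_\varepsilon\to(b_h)_\#\gamma$. You then identify the limit as an $h$-barycenter via $|\mathcal T_{h_\varepsilon}-\mathcal T_h|\le C_0\varepsilon^2$ and $W_2$-continuity of $\mathcal T_h$, and conclude by lower semicontinuity of entropy. Your extra claim that $\gamma$ is optimal for the limit multi-marginal cost is true but unnecessary, since the competitor comparison alone already shows that $(b_h)_\#\gamma$ is a barycenter.
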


\begin{proof}
For every \(\varepsilon>0\), choose an \(h_\varepsilon\)-barycenter
\(\bar\mu_\varepsilon\) satisfying
\begin{equation}\label{eq:eps-ent-final}
\Ent_{\cL^n}(\bar\mu_\varepsilon)
\le\sum_i\lambda_i\Ent_{\cL^n}(\mu_i)+c_F(F).
\end{equation}
Choose $\bar\mu_\varepsilon$ and an associated optimal multi-marginal plan \(\gamma_\varepsilon\) as constructed above, so that
\(\bar\mu_\varepsilon=(b_{h_\varepsilon})_\#\gamma_\varepsilon\).
The marginals are fixed, so the plans are tight and their weighted second
moments equal $\sum_i\lambda_i\int|x|^2\d\mu_i$, independently of
$\varepsilon$.  Every weak limit has the same marginals and hence the same
weighted second moment.  By \cite[Theorem~6.9]{Villani2009}, they are therefore
relatively compact in the $W_2$ topology on the product space.  Along a
subsequence, \(\gamma_\varepsilon\to\gamma\) in this topology.
Lemma~\ref{lem:pointbar-stability} gives
\(\bar\mu_\varepsilon\to\bar\mu:=(b_h)_\#\gamma\) in \(W_2\).

Let
\[
J_\varepsilon(\nu)=\sum_i\lambda_i\mathcal T_{h_\varepsilon}(\nu,\mu_i),
\qquad
J(\nu)=\sum_i\lambda_i\mathcal T_h(\nu,\mu_i).
\]
By Lemma~\ref{lem:cost-approx},
\(\sup_{\nu\in\Pp(\R^n)}|J_\varepsilon(\nu)-J(\nu)|
\le C_0\varepsilon^2\).
Since \(\bar\mu_\varepsilon\) minimizes \(J_\varepsilon\), it is a
\(2C_0\varepsilon^2\)-minimizer of \(J\).  The transport cost is continuous
under \(W_2\)-convergence
\cite[Theorem~5.20 and Corollary~6.11]{Villani2009}.  Therefore
\(\bar\mu\) minimizes \(J\).  

Since \(h=F^2/2\), the measure $\bar\mu$ is a
Wasserstein barycenter for the metric induced by $F$.  Lower semicontinuity of entropy
and \eqref{eq:eps-ent-final} give
\[
\Ent_{\cL^n}(\bar\mu)
\le\sum_i\lambda_i\Ent_{\cL^n}(\mu_i)+c_F(F).
\]
\end{proof}

\begin{proposition}[Rigidity of the log-determinant defect]\label{prop:cf-rigidity}
For a smooth strongly convex Minkowski norm $F$,
\[
c_F(F)=0
\quad\Longleftrightarrow\quad
F\text{ is induced by an inner product}.
\]
\end{proposition}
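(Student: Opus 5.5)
The direction ``inner product $\Rightarrow c_F(F)=0$'' is immediate.  If $F(v)^2=\langle Qv,v\rangle$ for some $Q\in\Sym_n^+$, then $h(v)=\frac12\langle Qv,v\rangle$ and $g_v=Q$ for every $v\ne0$.  Hence $\sum_i\lambda_ig_{v_i}=Q$ and every term in \eqref{eq:intro-cf} vanishes.

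For the converse, assume $c_F(F)=0$.  Then for every $v,w\ne0$ and every $\lambda\in(0,1)$ the two-point defect vanishes:
\[
\log\det\bigl(\lambda g_v+(1-\lambda)g_w\bigr)=\lambda\log\det g_v+(1-\lambda)\log\det g_w.
\]
Both sides depend only on $g_v,g_w\in\Sym_n^+$.  By the strict concavity of $\log\det$ recorded above, the second derivative along the segment $A_t=g_w+t(g_v-g_w)$ is $-\tr((A_t^{-1/2}HA_t^{-1/2})^2)<0$ whenever $H=g_v-g_w\ne0$.  A strictly concave function on a segment cannot be affine there.  So equality at some interior $\lambda$ forces $g_v=g_w$.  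In fact one interior point suffices, since strict concavity gives strict Jensen at every interior point.  Therefore $g_v\equiv Q$ is constant on $\R^n\setminus\{0\}$.

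It remains to recover the norm from the constant fundamental tensor.  Euler's identity for the two-homogeneous $C^2$ function $h$ gives $h(v)=\frac12\langle \mathrm D^2h(v)v,v\rangle=\frac12\langle Qv,v\rangle$.  Alternatively, integrate $\mathrm D^2h=Q$ on the connected set $\R^n\setminus\{0\}$ for $n\ge2$, using $h(0)=0$, $\mathrm Dh(0)=0$ and evenness.  Thus $F(v)^2=\langle Qv,v\rangle$, and $Q>0$ by strong convexity, so $F$ is induced by an inner product.

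The only step needing care is the passage from ``the defect vanishes'' to ``the $g_v$ coincide''.  This is exactly the strict concavity of $\log\det$, which is already established, so I expect no real obstacle.  The case $n=1$ is trivial.
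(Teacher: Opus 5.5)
Your proof is correct and follows essentially the same route as the paper. In both arguments, strict concavity of $\log\det$ applied to a two-point family forces $g_v=g_w$ for all nonzero $v,w$, and two-homogeneity then makes $h$ a quadratic form; your Euler identity $h(v)=\frac12\langle \mathrm D^2h(v)v,v\rangle$ simply spells out that last step, which the paper leaves implicit.
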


\begin{proof}
For an inner-product norm, \(g_v\) is independent of \(v\), so
\(c_F(F)=0\).  Conversely, suppose that \(c_F(F)=0\).  Strict concavity of
$\log\det$, applied to \((g_v+g_w)/2\), gives \(g_v=g_w\) for all nonzero
\(v,w\).  

Thus \(\mathrm D^2h\) is constant off the origin.  By
two-homogeneity, \(h\) is a quadratic form.
\end{proof}

\subsection{Lower bounds and perturbations of the Euclidean norm}

For a normed space, the cotangent norm is the dual norm at every point.
Theorem~\ref{thm:quantitative-hilbertianity} therefore gives the following
estimates.

\begin{corollary}[Quantitative estimates for normed spaces]\label{thm:reverse}
If $(\R^n,F,\cL^n)$ satisfies \(\BCD_\delta(0,\infty)\), then
\begin{align}
\mathfrak p_{\mathrm{par}}(F^*)&\le8\sqrt\delta,
\label{eq:reverse-pstar}\\
\Delta_{\mathrm{BCD}}(F)&\ge\frac1{64}\mathfrak p_{\mathrm{par}}(F^*)^2.
\label{eq:Delta-lower}
\end{align}
For every $p,q\in(\R^n)^*$, not both zero,
\[
\left|
\frac{F^*(p+q)^2+F^*(p-q)^2}
{2F^*(p)^2+2F^*(q)^2}-1
\right|
\le8\sqrt\delta.
\]
The supremum of the fraction inside
the absolute value is the dual von Neumann--Jordan constant.  It is at most
\(1+8\sqrt\delta\).
\end{corollary}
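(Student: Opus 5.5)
The plan is to reduce everything to Theorem~\ref{thm:quantitative-hilbertianity} applied to $X=(\R^n,F,\cL^n)$, and then rewrite its conclusion in the forms stated.

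\textbf{Step 1: the hypotheses of Theorem~\ref{thm:quantitative-hilbertianity} hold.} The space is complete, separable and geodesic, since straight segments are $F$-geodesics. Lebesgue measure is locally finite with full support and is doubling, because $\cL^n(B^F_r)=r^n\cL^n(B^F_1)$. Poincar\'e inequalities transfer from the Euclidean case, since $F$ is bi-Lipschitz equivalent to $|\cdot|$ and doubling and $(1,1)$-Poincar\'e are bi-Lipschitz invariant. We also need that the cotangent norm $\|\cdot\|_{x,*}$ at every $x$ equals the dual norm $F^*$. For a linear function $f=r$ we have $\operatorname{lip}f(x)=\sup_{F(v)\le1}r(v)=F^*(r)$. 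Cheeger's differential of a linear function is $r$ itself in the canonical chart given by linear coordinates. Combined with \eqref{eq:cks-sobolev-norm}, this identifies $\|\cdot\|_{x,*}=F^*$ for a.e.\ $x$. This identification is the only step needing care; I would justify it via \eqref{eq:cks-sobolev-norm} applied to the coordinate functions and their linear combinations. Consequently $\mathfrak p_{\mathrm{par}}(T_x^*X)=\mathfrak p_{\mathrm{par}}(F^*)$ for a.e.\ $x$.

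\textbf{Step 2: the two displayed inequalities.} Theorem~\ref{thm:quantitative-hilbertianity} now gives \eqref{eq:reverse-pstar}. For \eqref{eq:Delta-lower}, note that if $\Delta_{\mathrm{BCD}}(F)=+\infty$ there is nothing to prove. Otherwise, for each $\delta$ exceeding $\Delta_{\mathrm{BCD}}(F)$ with $\BCD_\delta(0,\infty)$ valid, we get $\mathfrak p_{\mathrm{par}}(F^*)^2\le64\delta$. Here we use monotonicity: $\BCD_\delta$ implies $\BCD_{\delta'}$ for $\delta'\ge\delta$, so such $\delta$ exist arbitrarily close to the infimum. Taking the infimum gives the bound.

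\textbf{Step 3: the ratio form.} Write $a,b,c,d$ as in the proof of Theorem~\ref{thm:quantitative-hilbertianity}, now with $F^*$. Since $(p,q)\ne(0,0)$, we have $c+d>0$. The parallelogram law in the form $a+b\le 2(c+d)$ holds in any normed space up to a factor, but we only need $M\le c+d$ here. Indeed $a\le(\sqrt{c/2}+\sqrt{d/2})^2\cdot 2/\ldots$; more simply, $F^*(p\pm q)^2\le 2F^*(p)^2+2F^*(q)^2=c+d$ by the triangle inequality and $(x+y)^2\le2x^2+2y^2$, while trivially $c,d\le c+d$. Hence $M\le c+d$, and
\[
\left|\frac{a+b}{c+d}-1\right|=\frac{|a+b-c-d|}{c+d}\le\frac{|a+b-c-d|}{M}\le\mathfrak p_{\mathrm{par}}(F^*)\le8\sqrt\delta.
\]
Taking the supremum of the fraction $(a+b)/(c+d)$ over $(p,q)$, which is the dual von Neumann--Jordan constant, gives the bound $1+8\sqrt\delta$.
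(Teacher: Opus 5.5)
Your proposal is correct and follows the paper's proof: apply Theorem~\ref{thm:quantitative-hilbertianity} with the constant cotangent norm $F^*$, take the infimum over admissible $\delta$, and use that the denominator $2F^*(p)^2+2F^*(q)^2$ dominates the maximum $M$ in the definition of $\mathfrak p_{\mathrm{par}}(F^*)$. Your check of the doubling and Poincar\'e hypotheses and of the identification $\|\cdot\|_{x,*}=F^*$ supplies details the paper leaves implicit; delete the unfinished aside ``$a\le(\sqrt{c/2}+\sqrt{d/2})^2\cdot 2/\ldots$'' in Step~3, since the triangle-inequality argument after it is the complete proof of $M\le c+d$.
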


\begin{proof}
Theorem~\ref{thm:quantitative-hilbertianity}, applied to the constant
cotangent norm, gives \eqref{eq:reverse-pstar}.  Taking the infimum over all
admissible $\delta$ gives \eqref{eq:Delta-lower}.  For the last estimate,
divide the parallelogram expression by
$2F^*(p)^2+2F^*(q)^2$.  This denominator is at least the maximum appearing
in the definition of $\mathfrak p_{\mathrm{par}}(F^*)$.
\end{proof}

The quadratic lower bound in \eqref{eq:Delta-lower} comes from optimizing
the transport parameter in the proof of
Theorem~\ref{thm:quantitative-hilbertianity}.  The perturbations of the
Euclidean norm below attain the same order.

Let $\psi\in C^\infty(S^{n-1})$ be even and define
\[
H(v)=|v|^2\psi(v/|v|),\qquad v\ne0,
\]
with $H(0)=0$.  For sufficiently small $|\tau|$, set
\[
F_\tau(v)^2=|v|^2+\tau H(v).
\]
Then $F_\tau$ is a smooth strongly convex Minkowski norm.  For $\tau\ne0$,
it is induced by an inner product if and only if $H$ is a quadratic form.
We henceforth assume that $\psi$ is not the restriction of a quadratic form.

Set
\(k(v)=H(v)/2\) and \(A_v=\mathrm D^2k(v)\) for \(|v|=1\).  Then the
fundamental tensor is \(g_v^\tau=I+\tau A_v\).
Define the maximal variance of $A_v$ in the Hilbert--Schmidt norm by
\[
\mathcal V(A):=
\sup_{\sigma\in\cP(S^{n-1})}
\left\{
\int\tr(A_v^2)\d\sigma(v)
-\tr\!\left[\left(\int A_v\d\sigma(v)\right)^2\right]
\right\}.
\]

\begin{proposition}[Second-order expansion of the log-determinant defect]\label{prop:cf-expansion}
As \(\tau\to0\),
\begin{equation}\label{eq:cf-expansion}
c_F(F_\tau)
=\frac{\tau^2}{2}\mathcal V(A)+O(|\tau|^3).
\end{equation}
The remainder is uniform in the number of directions and in the weights.
Moreover,
\[
\mathcal V(A)=0
\quad\Longleftrightarrow\quad
H\text{ is a quadratic form}.
\]
\end{proposition}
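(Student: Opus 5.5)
The plan is to expand $\log\det$ to second order about the identity and then read off both assertions from the leading term.

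\textbf{Step 1: the expansion.} By \eqref{eq:cf-prob}, $c_F(F_\tau)$ is the supremum over $\sigma\in\cP(S^{n-1})$ of
\[
\Phi_\tau(\sigma):=\log\det\Bigl(I+\tau\int A_v\d\sigma\Bigr)-\int\log\det(I+\tau A_v)\d\sigma .
\]
Here we use $g^\tau_v=I+\tau A_v$ and identify $\mathcal G_{F_\tau}$ with the image of the unit sphere. Since $\psi$ is smooth, $v\mapsto A_v$ is continuous on $S^{n-1}$, so $\|A_v\|\le\Lambda$ uniformly. For $\|B\|\le\Lambda$ and $|\tau|\Lambda\le1/2$ we have
\[
\log\det(I+\tau B)=\tau\tr B-\tfrac{\tau^2}{2}\tr(B^2)+R(\tau B),\qquad |R(\tau B)|\le C|\tau|^3\Lambda^3 ,
\]
from the power series of $\log(1+\lambda)$ applied to the eigenvalues. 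Apply this both to $B=\int A_v\d\sigma$, which also has norm at most $\Lambda$, and to each $A_v$. The linear terms cancel because the trace is linear. This gives
\[
\Phi_\tau(\sigma)=\frac{\tau^2}{2}\Bigl(\int\tr(A_v^2)\d\sigma-\tr\Bigl[\Bigl(\int A_v\d\sigma\Bigr)^2\Bigr]\Bigr)+O(|\tau|^3),
\]
with the $O(|\tau|^3)$ term bounded independently of $\sigma$. Taking the supremum over $\sigma$ yields \eqref{eq:cf-expansion}. The remainder is uniform in the weights and in the number of directions, since finitely supported $\sigma$ are a special case. The bound $|\tau|\Lambda\le1/2$ also makes $F_\tau$ strongly convex, so $c_F$ is defined.

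\textbf{Step 2: the rigidity statement.} The bracket defining $\mathcal V(A)$ equals $\int\|A_v-\bar A\|_{HS}^2\d\sigma$, where $\bar A=\int A_v\d\sigma$. So it is nonnegative and vanishes exactly when $A_v=\bar A$ for $\sigma$-a.e.\ $v$. Taking $\sigma=\tfrac12(\delta_v+\delta_w)$ shows that $\mathcal V(A)=0$ if and only if $A_v$ is independent of $v\in S^{n-1}$.

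\begin{itemize}
\item If $H$ is a quadratic form, then $\mathrm D^2k$ is constant, so $\mathcal V(A)=0$.
\item Conversely, suppose $A_v\equiv A$ on the sphere. Since $k$ is two-homogeneous, $\mathrm D^2k$ is zero-homogeneous, so $\mathrm D^2k\equiv A$ on $\R^n\setminus\{0\}$.
\end{itemize}

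In the converse case, $k-\tfrac12\langle Av,v\rangle$ has vanishing Hessian on the connected set $\R^n\setminus\{0\}$ (for $n\ge2$), so it is affine there. Being two-homogeneous, it must be zero. Hence $H=2k$ is a quadratic form. This is the same argument as in Proposition~\ref{prop:cf-rigidity}. The case $n=1$ is trivial, since every even two-homogeneous function is then quadratic.

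\textbf{Main obstacle.} The only real point is uniformity of the third-order remainder over all probability measures $\sigma$. This rests on the uniform operator-norm bound $\Lambda$ on $\{A_v\}$ and on its convex hull, which makes the Taylor remainder of $\log\det$ uniformly controlled. Everything else is routine.
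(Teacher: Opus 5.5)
Your proof is correct and follows essentially the same route as the paper. Both use a uniform second-order expansion of $\log\det(I+\tau B)$ over the bounded convex hull of $\{A_v\}$ and the cancellation of the linear trace terms. For the rigidity statement, both take the two-point measure $\tfrac12(\delta_v+\delta_w)$ and use zero-homogeneity of $\mathrm D^2k$. The differences are cosmetic: you take the supremum directly over all $\sigma\in\cP(S^{n-1})$ via \eqref{eq:cf-prob} instead of passing from finitely supported measures by weak density, and you spell out the variance identity and the homogeneity-to-quadratic step (including $n=1$) that the paper leaves brief.
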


\begin{proof}
	Since \(k\) is two-homogeneous,
	\[
	\mathrm D^2k(rv)=\mathrm D^2k(v)
	\qquad (r>0).
	\]
	Thus, in the definition of \(c_F(F_\tau)\), we may normalize all
	directions to lie in \(S^{n-1}\).
	
	Set
	\[
	\mathcal A:=\{A_v:v\in S^{n-1}\}.
	\]
	Since \(\mathcal A\) is compact, so is its convex hull.  Hence, uniformly
	for \(B\in\operatorname{co}(\mathcal A)\),
	\begin{equation}\label{eq:logdet-expansion}
		\log\det(I+\tau B)
		=
		\tau\tr B-\frac{\tau^2}{2}\tr(B^2)+O(|\tau|^3).
	\end{equation}
	
	Fix unit vectors \(v_1,\ldots,v_m\) and positive weights
	\(\lambda_i\) with \(\sum_i\lambda_i=1\).  Write
	\[
	A_i:=A_{v_i},
	\qquad
	\bar A:=\sum_i\lambda_iA_i.
	\]
	Since \(g_{v_i}^\tau=I+\tau A_i\), applying
	\eqref{eq:logdet-expansion} to \(\bar A\) and to each \(A_i\) gives
	\begin{align*}
		&\log\det\left(\sum_i\lambda_i g_{v_i}^\tau\right)
		-\sum_i\lambda_i\log\det g_{v_i}^\tau
		\\
		&\qquad=
		\frac{\tau^2}{2}
		\left[
		\sum_i\lambda_i\tr(A_i^2)
		-\tr(\bar A^2)
		\right]
		+O(|\tau|^3).
	\end{align*}
	The remainder is uniform in \(m\), the directions, and the weights,
	because \(\bar A,A_1,\ldots,A_m\) all belong to the fixed compact set
	\(\operatorname{co}(\mathcal A)\) and \(\sum_i\lambda_i=1\).
	
	Associate with this family the probability measure
	\[
	\sigma=\sum_i\lambda_i\delta_{v_i}.
	\]
	Then the quadratic term above is
	\[
	\int\tr(A_v^2)\d\sigma(v)
	-\tr\!\left[
	\left(\int A_v\d\sigma(v)\right)^2
	\right].
	\]
	Taking the supremum over all finite families therefore yields
	\[
	c_F(F_\tau)
	=
	\frac{\tau^2}{2}
	\sup_{\substack{\sigma\in\cP(S^{n-1})\\
			\sigma\ \mathrm{finitely\ supported}}}
	\left\{
	\int\tr(A_v^2)\d\sigma(v)
	-\tr\!\left[
	\left(\int A_v\d\sigma(v)\right)^2
	\right]
	\right\}
	+O(|\tau|^3).
	\]
	The functional inside the supremum is continuous under weak convergence
	of probability measures.  Since finitely supported probability measures
	are weakly dense in \(\cP(S^{n-1})\), the supremum is precisely
	\(\mathcal V(A)\).  This proves \eqref{eq:cf-expansion}.
	
	Finally, for \(v,w\in S^{n-1}\), take
	\[
	\sigma=\frac12(\delta_v+\delta_w).
	\]
	Then
	\[
	\int\tr(A_u^2)\d\sigma(u)
	-\tr\!\left[
	\left(\int A_u\d\sigma(u)\right)^2
	\right]
	=
	\frac14\tr\bigl((A_v-A_w)^2\bigr).
	\]
	Thus \(\mathcal V(A)=0\) if and only if \(A_v\) is independent of \(v\).
	
	If \(A_v\equiv A\), two-homogeneity implies
	\[
	\mathrm D^2k(x)=A
	\qquad (x\ne0).
	\]
	Therefore \(H=2k\) is a quadratic form.  The converse is immediate.
\end{proof}

To obtain the matching lower bound, define
\begin{equation}\label{eq:QH}
\mathcal Q(H):=
\sup_{(p,q)\ne(0,0)}
\frac{|H(p+q)+H(p-q)-2H(p)-2H(q)|}{M_0(p,q)},
\end{equation}
where
$M_0(p,q)=\max\{|p+q|^2,|p-q|^2,2|p|^2,2|q|^2\}$.
If $\mathcal Q(H)=0$, then $H$ satisfies the parallelogram identity.
Since $H$ is continuous, even, and two-homogeneous, the polarization
argument of Jordan--von Neumann \cite[pp.~721--722]{JordanvonNeumann}
shows that $H$ is a quadratic form.  Thus $\mathcal Q(H)>0$ under our
assumption on $\psi$.

\begin{lemma}[First-order expansion of the squared dual norm]
The following expansion holds uniformly in the dual variable:
\begin{equation}\label{eq:dual-expansion}
F_\tau^*(p)^2
=|p|^2-\tau H(p)+O(\tau^2|p|^2).
\end{equation}
If \(H\) is not quadratic, then for sufficiently small \(|\tau|\),
\begin{equation}\label{eq:pstar-lower-tau}
\mathfrak p_{\mathrm{par}}(F_\tau^*)
\ge\frac12\mathcal Q(H)|\tau|.
\end{equation}
\end{lemma}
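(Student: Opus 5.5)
The plan has two parts: first the dual expansion \eqref{eq:dual-expansion} via Legendre duality, then \eqref{eq:pstar-lower-tau} by inserting that expansion into the parallelogram defect.

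\textbf{Dual expansion.} Write $h_\tau=F_\tau^2/2=\frac12|v|^2+\frac\tau2H(v)$. The squared dual norm is the Legendre transform:
\[
\tfrac12F_\tau^*(p)^2=\sup_v\{p\cdot v-h_\tau(v)\}.
\]
By two-homogeneity of both sides, it suffices to treat $|p|=1$. The resulting $O(\tau^2)$ then scales to $O(\tau^2|p|^2)$, which gives uniformity.

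For the upper bound, note that $H$ is bounded on the sphere, $|H(v)|\le C|v|^2$ with $C=\|\psi\|_\infty$. For $|\tau|$ small, the supremum is therefore attained at some $v_\tau$ with $|v_\tau|\le2$. The first-order condition reads $p=v_\tau+\frac\tau2\mathrm DH(v_\tau)$. Since $\mathrm DH$ is bounded on bounded sets (it is one-homogeneous and smooth off $0$), this gives $v_\tau=p+O(\tau)$.

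For the lower bound, test with $v=p$:
\[
\tfrac12F_\tau^*(p)^2\ge\tfrac12|p|^2-\tfrac\tau2H(p).
\]
For the matching upper bound, evaluate the supremum at $v_\tau=p+w$ with $w=O(\tau)$:
\[
p\cdot v_\tau-\tfrac12|v_\tau|^2-\tfrac\tau2H(v_\tau)=\tfrac12|p|^2-\tfrac12|w|^2-\tfrac\tau2H(p+w).
\]
Because $H$ is $C^1$ with $H(p+w)=H(p)+O(|w|)$ near the unit sphere, this is at most $\tfrac12|p|^2-\tfrac\tau2H(p)+O(\tau^2)$. Together the two bounds give \eqref{eq:dual-expansion}.

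\textbf{Parallelogram lower bound.} Substitute the expansion into the parallelogram defect. The $|\cdot|^2$ terms satisfy the parallelogram identity exactly. Using
\[
|p+q|^2+|p-q|^2+2|p|^2+2|q|^2\le4M_0(p,q),
\]
this gives
\begin{equation*}
\Bigl|\,\|p+q\|_*^2+\|p-q\|_*^2-2\|p\|_*^2-2\|q\|_*^2+\tau\bigl(H(p+q)+H(p-q)-2H(p)-2H(q)\bigr)\Bigr|\le C'\tau^2M_0(p,q).
\end{equation*}
Here $\|\cdot\|_*=F_\tau^*$.

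For the denominator, \eqref{eq:dual-expansion} gives $\|u\|_*^2=|u|^2(1+O(\tau))$. Hence
\[
M_*(p,q):=\max\{\|p+q\|_*^2,\|p-q\|_*^2,2\|p\|_*^2,2\|q\|_*^2\}\le(1+C''|\tau|)M_0(p,q).
\]

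Now choose $(p,q)$ nearly attaining $\mathcal Q(H)$: let its ratio in \eqref{eq:QH} be at least $\frac34\mathcal Q(H)$. This is possible since $\mathcal Q(H)>0$ when $H$ is not quadratic, as established just before the lemma. Then
\[
\mathfrak p_{\mathrm{par}}(F_\tau^*)\ge\frac{\frac34\mathcal Q(H)|\tau|M_0-C'\tau^2M_0}{(1+C''|\tau|)M_0},
\]
which is at least $\frac12\mathcal Q(H)|\tau|$ once $|\tau|$ is small. The threshold is determined by $\mathcal Q(H)$, $C'$ and $C''$; it is independent of $(p,q)$ because every error term is homogeneous of degree two.

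\textbf{Main obstacle.} The delicate point is making the $O(\tau^2)$ remainder in \eqref{eq:dual-expansion} genuinely uniform and quadratic in $\tau$. This needs the a priori bound $v_\tau=p+O(\tau)$, which uses only $\psi\in C^1$, together with control of the maximizer near the unit sphere, away from the nonsmooth origin. An alternative is to use strong convexity of $h_\tau$, uniform in small $\tau$, to obtain a Lipschitz inverse of $\mathrm Dh_\tau$.
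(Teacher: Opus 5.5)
Your proposal is correct and follows the paper's proof. Both expand the Legendre transform of $h_\tau=\frac12|v|^2+\frac\tau2H(v)$ using the first-order condition $p=v+\frac\tau2\mathrm DH(v)$, insert the expansion into the parallelogram defect, bound the denominator by $M_0(p,q)(1+O(|\tau|))$, and pick $(p,q)$ with quotient above $\frac34\mathcal Q(H)$; the only difference is that the paper solves for the maximizer to second order, $v=p-\tau\mathrm Dk(p)+O(\tau^2|p|)$, whereas you sandwich $\frac12F_\tau^*(p)^2$ between the test value at $v=p$ and the value at a maximizer satisfying only $v_\tau=p+O(\tau)$, which is slightly leaner (needing only $\psi\in C^1$) and gives the same uniform $O(\tau^2|p|^2)$ remainder.
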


\begin{proof}
Let \(h_\tau(v)=\frac12|v|^2+\tau k(v)\), where \(k=H/2\).  Uniform strong
convexity, two-homogeneity, and the implicit equation
$p=\mathrm D h_\tau(v)=v+\tau \mathrm D k(v)$
give \(v=p-\tau \mathrm D k(p)+O(\tau^2|p|)\).  The derivatives of $k$
are uniformly bounded on the Euclidean unit sphere, and two-homogeneity
extends the estimate uniformly to all $p$.  Substitution into the Legendre
transform yields
\(h_\tau^*(p)=|p|^2/2-\tau k(p)+O(\tau^2|p|^2)\).
Since \(F_\tau^*(p)^2=2h_\tau^*(p)\), this is
\eqref{eq:dual-expansion}.  The Euclidean terms cancel in the parallelogram
expression, so
\[
\begin{aligned}
&F_\tau^*(p+q)^2+F_\tau^*(p-q)^2
-2F_\tau^*(p)^2-2F_\tau^*(q)^2\\
&\qquad
=-\tau\bigl[H(p+q)+H(p-q)-2H(p)-2H(q)\bigr]
+O(\tau^2M_0(p,q)).
\end{aligned}
\]
The denominator in $\mathfrak p_{\mathrm{par}}(F_\tau^*)$ equals
$M_0(p,q)(1+O(|\tau|))$.  Choose $p,q$ for which the
quotient in \eqref{eq:QH} is greater than $3\mathcal Q(H)/4$.  For small
\(|\tau|\), division gives \eqref{eq:pstar-lower-tau}.
\end{proof}

\begin{theorem}[Quadratic order near Euclidean norms]
\label{thm:perturbative-equivalence}
If \(H\) is not quadratic, then for sufficiently small \(|\tau|\),
\[
\frac{\mathcal Q(H)^2}{256}\tau^2
\le\Delta_{\mathrm{BCD}}(F_\tau)
\le c_F(F_\tau)
=\frac{\tau^2}{2}\mathcal V(A)+O(|\tau|^3).
\]
In particular,
$\Delta_{\mathrm{BCD}}(F_\tau)\asymp_\psi\tau^2$.
\end{theorem}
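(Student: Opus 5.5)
The proof chains three earlier results, one for each inequality.

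The middle inequality $\Delta_{\mathrm{BCD}}(F_\tau)\le c_F(F_\tau)$ comes from Theorem~\ref{thm:minkowski-upper}. That theorem shows $(\R^n,F_\tau,\cL^n)$ satisfies $\BCD_{c_F(F_\tau)}(0,\infty)$, so $c_F(F_\tau)$ is an admissible error. We first check the hypothesis \eqref{eq:ellipticity} for small $|\tau|$. Since $g^\tau_v=I+\tau A_v$ and $v\mapsto A_v$ is continuous on the compact sphere, we have $\sup_v\|A_v\|<\infty$. Hence $(1-C|\tau|)I\le g^\tau_v\le(1+C|\tau|)I$. Smoothness of $h_\tau$ off the origin follows from $\psi\in C^\infty(S^{n-1})$. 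The right-hand equality is exactly the expansion \eqref{eq:cf-expansion} of Proposition~\ref{prop:cf-expansion}.

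For the lower bound, apply \eqref{eq:Delta-lower} from Corollary~\ref{thm:reverse}, which gives $\Delta_{\mathrm{BCD}}(F_\tau)\ge\frac1{64}\mathfrak p_{\mathrm{par}}(F_\tau^*)^2$. Then insert \eqref{eq:pstar-lower-tau}, namely $\mathfrak p_{\mathrm{par}}(F_\tau^*)\ge\frac12\mathcal Q(H)|\tau|$, valid for small $|\tau|$ since $H$ is not quadratic. This yields
\[
\Delta_{\mathrm{BCD}}(F_\tau)\ge\frac1{64}\cdot\frac14\mathcal Q(H)^2\tau^2=\frac{\mathcal Q(H)^2}{256}\tau^2 .
\]
One point needs care. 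If $\Delta_{\mathrm{BCD}}(F_\tau)$ is an infimum that is not attained, Corollary~\ref{thm:reverse} still applies to every admissible $\delta$ and passes to the infimum, which is how \eqref{eq:Delta-lower} is stated anyway.

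For the asymptotic statement $\Delta_{\mathrm{BCD}}(F_\tau)\asymp_\psi\tau^2$:
\begin{itemize}
\item $\mathcal Q(H)>0$ because $H$ is not quadratic, as shown by the Jordan--von Neumann argument after \eqref{eq:QH}.
\item $\mathcal V(A)$ is finite since $\operatorname{co}(\mathcal A)$ is bounded, and positive by Proposition~\ref{prop:cf-expansion}.
\item The $O(|\tau|^3)$ remainder can be absorbed by taking $C_\psi=\mathcal V(A)$ and $|\tau|$ small.
\end{itemize}

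The only step that is not pure bookkeeping is matching the "sufficiently small $|\tau|$" thresholds. These come from the ellipticity requirement, the validity of \eqref{eq:pstar-lower-tau}, and the uniform remainder in \eqref{eq:cf-expansion}, and we take $\tau_0$ to be the minimum of the three. Each threshold is uniform by compactness of $S^{n-1}$, so no real obstacle arises.
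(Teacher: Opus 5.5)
Your proposal is correct and follows the paper's proof: the lower bound comes from \eqref{eq:Delta-lower} combined with \eqref{eq:pstar-lower-tau}, the middle inequality from Theorem~\ref{thm:minkowski-upper}, and the expansion from Proposition~\ref{prop:cf-expansion}. The paper leaves implicit the details you make explicit: ellipticity of $F_\tau$ for small $|\tau|$, positivity of $\mathcal Q(H)$ and $\mathcal V(A)$, and taking $\tau_0$ as the minimum of the three thresholds.
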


\begin{proof}
Corollary~\ref{thm:reverse} and \eqref{eq:pstar-lower-tau} give the lower
bound.  Theorem~\ref{thm:minkowski-upper} gives the middle inequality, and
Proposition~\ref{prop:cf-expansion} gives the expansion.
\end{proof}

\begin{proof}[Proof of Theorem~\ref{thm:intro-finsler}]
The two-sided estimate follows from Theorem~\ref{thm:minkowski-upper} and
Corollary~\ref{thm:reverse}.  Proposition~\ref{prop:cf-rigidity} and the
Jordan--von Neumann characterization \cite{JordanvonNeumann} identify the vanishing of $c_F(F)$ and
$\mathfrak p_{\mathrm{par}}(F^*)$ with the inner-product case.  The two-sided
estimate then gives the equivalence with $\Delta_{\mathrm{BCD}}(F)=0$.
The final assertion follows from
Theorem~\ref{thm:perturbative-equivalence}.
\end{proof}

\paragraph{An explicit two-dimensional norm.}

The coefficient $1/25$ is chosen so that the log-determinant bound satisfies
the sufficient condition in Theorem~\ref{thm:enb}.  Consider
\[
F_{\mathrm{ex}}(x,y)^2
=x^2+y^2+\frac1{25}\frac{x^4+y^4}{x^2+y^2},
\qquad (x,y)\ne(0,0),
\]
with \(F_{\mathrm{ex}}(0)=0\).  Put
$Q(x,y)=(x^4+y^4)/(x^2+y^2)$.
For $(x,y)\ne(0,0)$, put $s=x^2y^2/(x^2+y^2)^2\in[0,1/4]$.
The Hessian $\mathrm D^2Q$ is zero-homogeneous, so it suffices to compute
on the Euclidean unit circle, where $s=x^2y^2$.  Direct calculation gives
\[
\tr \mathrm D^2Q=24s,
\qquad
\det \mathrm D^2Q=-48s^2+48s-4.
\]
Moreover, \(-2I\le \mathrm D^2Q\le5I\).  Indeed,
\(\det(\mathrm D^2Q+2I)=48s(2-s)\ge0\) and
\(\det(5I-\mathrm D^2Q)=21-72s-48s^2\ge0\),
and the corresponding traces are nonnegative.  Since
\(h_{\mathrm{ex}}:=F_{\mathrm{ex}}^2/2
=(x^2+y^2)/2+Q/50\), it follows that
\(24I/25\le \mathrm D^2h_{\mathrm{ex}}\le11I/10\).  Thus
\(c_F(F_{\mathrm{ex}})\le2\log(55/48)<\frac12\log2\).
Theorem~\ref{thm:minkowski-upper} gives
$\BCD_{c_F(F_{\mathrm{ex}})}(0,\infty)$, so Theorem~\ref{thm:enb} implies
essential non-branching.  The norm is not induced by an inner product,
as seen directly from the failure of the parallelogram identity:
$F_{\mathrm{ex}}(e_1+e_2)^2+F_{\mathrm{ex}}(e_1-e_2)^2
\ne2F_{\mathrm{ex}}(e_1)^2+2F_{\mathrm{ex}}(e_2)^2$.
Thus the almost BCD condition with a small error admits non-Riemannian examples.

\paragraph{Acknowledgements.}
We thank Emanuel Milman for his interest in our work on Wasserstein
barycenters and for suggesting that we investigate the equivalence between
BCD and RCD.

\end{document}